\mathchardef\emptyset="001F
\theoremstyle{plain}
\newtheorem{theorem}{Theorem}[section]
\newtheorem{lemma}[theorem]{Lemma}
\newtheorem{proposition}[theorem]{Proposition}
\newtheorem{remark}[theorem]{Remark}
\newtheorem{definition}[theorem]{Definition}
\newtheorem{example}[theorem]{Example}
\newtheorem{hypothesis}[theorem]{Hypothesis}
\numberwithin{equation}{section}
\newcommand{\e}{\epsilon}
\newcommand{\eps}{\epsilon}
\newcommand{\R}{{\mathbb R}}
\newcommand{\Op}{{\mathcal O}}
\newcommand{\N}{{\mathbb N}}
\newcommand{\Q}{{\mathbb Q}}
\newcommand{\ZZ}{{\mathbb Z}}
\newcommand{\KK}{{\mathbb K}}
\newcommand{\NN}{{\mathbb N}}
\newcommand{\QQ}{{\mathbb Q}}
\newcommand{\DD}{{\mathbb D}}
\newcommand{\MM}{{\mathbb M}}
\newcommand{\RR}{{\mathbb R}}
\newcommand{\CC}{{\mathbb C}}
\newcommand{\LL}{{\mathbb L}}
\newcommand{\T}{{\mathbb T}}
\newcommand{\TT}{{\mathbb T}}
\newcommand{\V}{{\mathcal V}}
\newcommand{\VV}{{\mathcal V}}
\newcommand{\caS}{{\mathcal S}}
\newcommand{\caH}{{\mathcal H}}
\newcommand{\dive}{{\rm div}}
\newcommand{\no}{\noindent}
\renewcommand{\div}{\hbox{{\rm div}}}
\newcommand{\grad}{\hbox{{\rm grad}}}
\newcommand{\bfv}{{\bf v}}
\newcommand{\bfn}{{\bf n}}
\newcommand{\bfe}{{\bf e}}
\newcommand{\bfb}{{\bf b}}
\newcommand{\caT}{{\mathcal T}}
\newcommand{\caO}{{\mathcal O}}
\def\2s{\stackrel{\rm 2s}{\rightharpoonup}}
\def\uu{{\underline u}}
\def\uv{{\underline v}}
\def\utheta{{\underline \theta}}
\def\uepsilon{{\underline \epsilon}}
\def\curl{{\rm curl}\,}
\title[]
{Diffraction of Bloch  Wave Packets for Maxwell's Equations}
\author[Gr\'egoire Allaire]{Gr\'egoire Allaire$^{1}$}
\author[Mariapia Palombaro]{Mariapia Palombaro$^{2}$}
\author[Jeffrey Rauch]{Jeffrey Rauch$^{3}$}
\begin{document}
\baselineskip3.15ex
\vskip .3truecm

\begin{abstract}
\small{
We study, for times of order $1/h$,
solutions of Maxwell's equations 
in an  
$\Op(h^2)$ modulation
of an $h$-periodic medium. The solutions 
are of slowly varying amplitude type built on
Bloch plane waves with wavelength of order $h$.
We construct  accurate approximate solutions
of three scale WKB type.
The leading profile is both transported at the group
velocity and dispersed by a Schr\"odinger equation given
by the quadratic approximation of the Bloch dispersion
relation.
A weak ray average hypothesis
guarantees stability.  Compared to earlier
work on scalar wave equations,  
the generator is no longer elliptic.
Coercivity  holds only on the complement
of an infinite dimensional kernel.
The system
structure  
 requires
many innovations.
\vskip.3truecm
\noindent  {\bf Key words:} Geometric optics, diffractive
geometric optics,
 Bloch waves, 
diffraction, electromagnetism, Maxwell's equatons.
\vskip.2truecm
\noindent  {\bf 2000 Mathematics Subject Classification:} 
35B40, 35B27, 35Q60, 35B34, 35J10.
}
\end{abstract}

\maketitle
{
\small
\noindent
$^1$ Centre de Math\'ematiques Appliqu\'ees, \'Ecole Polytechnique, 
91128 Palaiseau, France.\\
Email: gregoire.allaire@polytechnique.fr\\
\noindent
$^2$ 
University of L'Aquila, Department of Information Engineering,
Computer Science and \\
Mathematics,
Via Vetoio 1 (Coppito), 67100 L'Aquila, Italy.\\
Email: mariapia.palombaro@univaq.it

\noindent
$^3$ Department of Mathematics, University of Michigan,
Ann Arbor 48109 MI, USA.\\
Email: rauch@umich.edu
}

\maketitle

\section{Introduction.}

This paper studies  the propagation of 
electromagnetic waves 
through perturbed periodic
media with period $h\ll 1$.%
\footnote{A word on units.
The Maxwell  equations in vacuum
have 
permittivities $\epsilon, \mu$ 
and speed of light $c=1/\sqrt{\epsilon\mu}$.
Since the speed of light  is $\gg 1$ in KMS
or CGS units, 
$\eps\mu$ is small
 in those units.   We perform an asymptotic
analysis as $h\to 0$. Denote by 
$\Delta t$ the unit of time.
No matter what the 
units one has
$h\ll \Delta t/ \{\eps, \mu\}$ in this limit 
so there is scale separation
no  matter what are the values of $\eps$
and $\mu$.
Nevertheless it is wise to, and we choose 
to,  work in units
with $c\,\Delta t$ 
comparable to 1.  For example centimeters 
for length and $\Delta t\ll 1$
equal to
the number of seconds
that it takes light in vacuum to traverse one centimeter.
In those units $c\Delta t=1$
and one expects that the constants in 
our
error bounds will not  be very large.
}
 We
treat the 
resonant case where the
length scale of the periodic structure is comparable
to the  wavelength.
The  observation time $t$  satisfies $t\sim 1/h$. 
This is the diffractive time scale where 
standard Maxwell 
equations (without periodic structure)
are approximated by Schr\"odinger's equation
(see \cite{donnat1}, \cite{jmr}).
  Wavelengths that are
short compared to the period are short compared to the scale on which
the coefficients vary.  This is the domain of validity of 
standard geometric optics
(see \cite{donnat1}, \cite{lannes}
 for the diffractive case).
Wavelengths long compared to the period are analysed by
standard homogenisation
\cite{blp}.
  The interest, both mathematical
and scientific, of the resonant  scaling is that the speeds of propagation
and diffractive effects for wave packets
are given by the Bloch  dispersion
relation of the periodic medium and not
by the symbol of  the
 hyperbolic operator or its hyperbolic homogenisation.
    The propagation
    speeds can be radically different from those of 
the
original equations. The new speeds must not violate the finite
speed of the original equations (see 
Theorem \ref{thm.speed} for a proof of this upper bound) but can be much smaller.
This is the basis for strategies to  slow light
(\cite{altug}, \cite{hau}, \cite{bajcsy}, \cite{vlasov}).  That in turn is
one of the proposed  design elements of the all
optical computer. 
Another domain of application is photonic crystal
fibers constructed with periodicity in crossection
(see \cite{russell}, \cite{gersen}, \cite{kuchment2}).
The last three examples are modeled by Maxwell's equations
that are
the subject of the current article.
 
 In our papers \cite{apr}, \cite{apr2}  we study
 scalar wave equations.
 In most cases,  Maxwell's
equations cannot be reduced to scalar equations.  
The most interesting applications require the 
methods of the present paper.
For constant {\it scalar} permittivities 
the  Maxwell equations can be reduced to  the scalar
wave equation.     For constant
$\epsilon,\mu$ with $\epsilon$ a three by three matrix with 
three distinct positive eigenvalues, it has been known
since the time of Hamilton 
(see \cite{hamilton},  \cite{courant}
page 610),
 that the characteristic
polynomial is of the form
$
\tau^2\,Q(\tau,\xi)
$
with $Q$ an irreducible quartic polynomial.  The characteristic
variety is conic with nontrivial singular points.
$Q$ does
not factor as the product of two quadratics in which case
the variety would be the union of two (double) cones with elliptic
cross-section.
For variable, even scalar, permittivities,
the standard derivation of second order equations
by taking time derivatives works but leads to a 
{\it system} of second order equations for $E$ coupled
through lower order terms.

Maxwell's dynamic equations  for unknown $E(t,x),B(t,x)\in \RR^3\times\RR^3$
read
\begin{equation}
\label{eq:Maxwell}
\partial_t
\begin{pmatrix}
\epsilon\, E \cr
\mu\, B
\end{pmatrix}
\ +\ 
\begin{pmatrix}
-\,\curl B \cr
+\, \curl E
\end{pmatrix}
= 0 
\end{equation}
whose infinitesimal generator  is not elliptic. 
Indeed if $\epsilon,\mu$ depends only on $x$, 
then $(\nabla_x\phi\,,\,\nabla_x\psi)$
is a  stationary solution for any 
$\phi(x),\psi(x)\in C^\infty_0(\RR^3)$. 
For any ball there is an infinite dimensional set
of such solutions 
supported in the ball.
  This is one of the main differences 
of the current article with the earlier ones on 
scalar wave equations.   
The failure of ellipticity is compensated by the fact
that the set of physically relevant
 solutions
have additional strong control on their divergence.
Those solutions satisfy semiclassical ellipticity estimates
(see Theorem \ref{thm:coercivity1}).  A second principal
difference with the earlier work is that for systems
it is not uncommon to have Bloch eigenvalues of multiplicity
greater than one.  In that case the Schr\"odinger equations
of diffractive geometric optics are systems and we
treat that possibility.
A third difference with the scalar case is a simplification.
For the scalar case,  estimates for gradients 
of the error in the approximate solution were
straight forward but there was a strikingly difficult
argument to estimate the undifferentiated error.
In the case of systems, the natural energy estimate
is an  $L^2$ estimate.   Derivative estimates are proved
from $L^2$ estimates for the time derivatives 
and the divergence.  The
remaining derivatives
are estimated by an ellipticity argument.    
The difficult argument 
 from the scalar case is not required.

There are two problems  closely related to the ones
we analyse.
The first is the 
treatment of first order elliptic systems.  The second is the 
treatment of the Maxwell system on the time scales of 
geometric optics.   The second problem is solved
{\it en  passant} in \S \ref{sec:gopBloch}.  We have chosen to skip the first
and  jump directly to the 
Maxwell's equations.   The methods that suffice for Maxwell
yield the 
elliptic case 
directly.

The semiclassical estimates of the present
article permit a strengthening of 
the 
earlier paper 
\cite{apr2},
where 
derivatives of order $\le 1$ were estimated.
The new method yields estimates for 
 derivatives
of all orders.    

Our three articles use corrector terms
in asymptotic expansions.
Article  \cite{apr} 
correctors
were constructed by an {\it ad hoc}
method and used in  test functions 
to study 
 weak convergence.   In \cite{apr2}
we introduced a
a general strategy
yielding  accurate expansions.   

The $h$-dependent Maxwell equations are written in the form
$P^h(E^h,B^h)=0$ with 
\begin{equation}
\label{eq:maxwelldiffractive}
P^h(t,x,\partial_t,\partial_x) (E^h,B^h) \, := 
\partial_t
\begin{pmatrix}
\epsilon^h\, E^h \cr
\mu^h\, B^h
\end{pmatrix}
\ +\ 
\begin{pmatrix}
-\,\curl B^h \cr
+\, \curl E^h
\end{pmatrix}
\ +\ M^h
\begin{pmatrix}
E^h \cr
B^h
\end{pmatrix} \,.
\end{equation}
For the diffractive scaling the perturbations satisfy the following
hypothesis  where $\epsilon_0(x/h)$ and  $\mu_0(x/h)$ 
are the permittivities of the  unperturbed
periodic structure at scale $h$.
The  $6\times 6$ matrix valued function $M^h$ serves
for example to model dissipative effects such as Ohm's
law (see \S  \ref{sec:gopBloch}).

\vskip.2cm

\no
{\bf Notation.} {\sl For two vectors $(e,b)\in\CC^3\times\CC^3$, their Hermitian inner
product is denoted by $\langle e,b\rangle$ while $(e,b)$ denotes the
ordered pair
 in $\CC^3\times\CC^3$. The cross product in $\CC^3$ is denoted by $\wedge$. 
The set of linear maps (homeomorphisms) on a vector space $K$ is denoted by 
${\rm Hom}(K)$. For any $\alpha=(\alpha_0,\alpha_1,\alpha_2,\alpha_3)\in\NN^4$ 
the notation $\partial^\alpha_{t,x}\phi(t,x)$ means 
$\partial^{\alpha_0}_t\left(\prod_{i=1}^3 \partial^{\alpha_i}_{x_i}\right)\phi(t,x)$. 
}

\vskip.3cm

The Maxwell equations \eqref{eq:maxwelldiffractive} 
are a 
symmetric first-order hyperbolic system for $u^h=(E^h,B^h)$
\begin{equation}
\label{eq:maxwelldiffractive2}
P^h(t,x,\partial_t,\partial_x) u^h \, = \,
\partial_t (A_0^h u^h )+ \sum_{j=1}^3 A_j \partial_{x_j} u^h 
\ +\ M^hu^h\,,
\end{equation}
with symmetric matrix coefficients $A_j$ for $j\ge 1$  defined in
\eqref{eq:AintermsofJ}, \eqref{eq:defJ} and 
\begin{equation*} 
 A_0^h(t,x) 
 \ :=\
  \left(
\begin{array}{ll}
\epsilon^h(t,x) & 0\\
0 & \mu^h(t,x)
\end{array}
\right)
\,.
\end{equation*}

\begin{hypothesis}  
\label{hyp:diffractive}
{\bf Diffractive time scale hypothesis.}
The coefficients in \eqref{eq:maxwelldiffractive}
are given by
$$
\begin{array}{lcl}
\epsilon^h(t,x) &=& \epsilon_0(x/h) +h^2\epsilon_1(t,x,x/h) \,, \\
\mu^h(t,x) &=& \mu_0(x/h) +h^2\mu_1(t,x,x/h) \,, \\
M^h(t,x) &=& h\,M(t,x,x/h)\,.
\end{array}
$$
The matrix valued functions $\epsilon_0(y)\,,\,
 \mu_0(y)\in C^\infty(\TT^3)$
 are
 symmetric  
and positive definite and 
for all $\alpha$, 
$\partial_{t,x,y}^\alpha \{\epsilon_1,\mu_1, M\}(t,x,y)\in L^\infty(\RR^{1+3}\times\TT^3)$ with $\TT^3$ denoting the three-dimensional torus $(\RR/2\pi \ZZ )^3$.
Define
$$
A_0^0(y)
\ :=\
\begin{pmatrix}
 \eps_0(y) & 0
\cr
0 & \mu_0(y)
\end{pmatrix},
\qquad
A_0^1(t,x,y) 
\ :=\
\begin{pmatrix}
\eps_1(t,x,y) & 0
\cr
0 & \mu_1(t,x,y)
\end{pmatrix}
\,.
$$
\end{hypothesis}

\vskip.2cm

\begin{definition}
\label{def:L2periodic}
The space of $L^2_{loc}(\R^3)$ periodic functions of
period $2\pi$ is denoted $L^2(\T^3)$. The functions 
$(E,B)$ of $L^2(\T^3)$ with values in $\R^3\times\R^3$ 
are normed by 
$$
\int_{[0,2\pi[^3} \Big( |E|^2 + |B|^2\Big)  dx \,.
$$
When normed by the equivalent expression natural
in the context of Maxwell's equations 
$$
\int_{[0,2\pi[^3} \Big(
\langle E\,,\, \epsilon_0 \, E\rangle 
\  +\ 
\langle B\,,\, \mu_0\,B\rangle \Big)
\ dx
$$
it is denoted $L^2_{\epsilon_0,\mu_0}(\T^3)$.
\end{definition}

\begin{definition}
\label{def:thetaper}
For $\theta\in [0,1[^3$ a function $g(x)$ on $\R^3$ is {\bf $\theta$-periodic}
when 
$x\to e^{- i\theta.x}\,g(x)$ is periodic in $x$ with period $2\pi$.
 The  parameter $\theta$ is called 
the {\bf Bloch frequency}.  
The set of $L^2_{loc}(\R^3)$ $\theta$-periodic
functions is denoted $L^2(\T^3_\theta)$. With
alternate norm as in 
Definition \ref{def:L2periodic}
 it is denoted
$L^2_{\epsilon_0,\mu_0}(\T^3_\theta)$.
\end{definition}

Our solutions are amplitude modulated Bloch plane waves.  The
plane waves are
$\theta$-periodic
solutions of the unperturbed periodic Maxwell equations 
of the form $u=e^{\lambda t}(E(x),B(x))$.
Equivalently $E,B$  are solutions of the  
spectral problem
\begin{equation}
\label{eq:evalue}
\lambda\,\begin{pmatrix}
\epsilon_0(x) & 0
\cr
0 & \mu_0(x)
\end{pmatrix}
\begin{pmatrix}
E(x)\cr B(x)
\end{pmatrix} 
\ =\ 
\begin{pmatrix}
 0 & \curl 
\cr
-\curl & 0
\end{pmatrix}
\begin{pmatrix}
E\cr B
\end{pmatrix}\,,
\qquad
\{ E\,,\, B\} \quad \theta-{\rm periodic}\,. 
\end{equation}

We recall in \S \ref{sec:Bloch-theory}
that the spectrum at fixed $\theta$ consists of 
$\{0\}$ with infinite multiplicity and a discrete set of
purely imaginary
eigenvalues $\lambda=i\omega(\theta)$ 
of finite multiplicity.  We label the nonzero
eigenvalues
according to their distance
from the origin and  repeat them according to their 
multiplicity
\begin{equation}
\label{eq:numbering}
\cdots\ \ \le \ \omega_{-2}(\theta)\ \le \ 
\omega_{-1}(\theta)\ < \ \omega_0(\theta)=0\ <\
\omega_{1}(\theta)
\ \le \
\omega_{2}(\theta)
\ \le \ 
\cdots\,.
\end{equation}
We work near an eigenvalue 
of constant multiplicity.

\begin{hypothesis}
\label{hyp:constantmultiplicity}
({\bf Constant\ multiplicity\ hypothesis.})  
Fix 
$\utheta\ne 0$, $n\in\ZZ^*$, and
denote by $\kappa$ the multiplicity,
$$
\underline\lambda
\ =\
i\omega_n(\utheta)
\ \ne\
 0\,,
\qquad
\omega_{n-1}(\utheta)
\ <\
\omega_n(\utheta) 
\ =\  \dots\  =\
\omega_{n+\kappa-1}
(\utheta) 
\ <\
\omega_{n+\kappa}
(\utheta)\,.
$$
Assume that  there are $\delta_1>0$, $\delta_2>0$
and real analytic $\omega(\theta)$
defined on $\{|\theta-\utheta|<\delta_1\}$ 
so that for each $|\theta-\utheta|<\delta_1$ the only point of the spectrum
in $\{\lambda\,:\,|\lambda-i\omega_n(\utheta)|<\delta_2\}$  is 
$i\omega(\theta)$ with multiplicity $\kappa$.
\end{hypothesis}

\noindent
The hypothesis is automatically satisfied when $\kappa=1$.
An example with $\kappa >1$ is $\eps$ and $\mu$  constant and scalar where every
eigenvalue is of constant multiplicity two (see Example \ref{important-example} below).

\begin{definition}
\label{def:groupvelocity}
When the constant multiplicity hypothesis
\ref{hyp:constantmultiplicity} is satisfied
the {\bf group velocity} is defined by
$$
\V\ :=\ 
-\nabla_\theta \omega(\utheta)\,.
$$
\end{definition}

Define
\begin{equation}
\label{defL}
\LL(\omega, \theta, y,\partial_y)
\ :=\ i \omega
\begin{pmatrix}
\epsilon_0(y)&0 \cr
0& \mu_0(y)
\end{pmatrix}
- 
\left(
\begin{array}{cc}
0 & (i\theta + \partial_y )\wedge \\
-(i\theta + \partial_y )\wedge & 0
\end{array}
\right)
\,.
\end{equation}
$\LL$ 
with domain equal to the periodic
functions in 
$H^\infty(\TT^3_y)
:=\cap_{s\geq0} H^s(\TT^3)$
 is formally 
antiselfadjoint 
on $L^2(\TT^3\,;\, dy)$.
The method of proof of Proposition \ref{prop:antisa}
shows that  the closure has domain equal to the
$v\in L^2(\TT^3)$ so that $\LL v\in L^2(\TT^3)$
and that the closure is antiselfadjoint. 

\begin{definition} 
\label{def:L-P}
Denote by $\Pi$ the projection operator onto
$\KK := \ker \LL(\omega(\utheta), \utheta,y,\partial_y)$
along the image of $\LL$.  $\Pi$ is
orthogonal with respect to the scalar
product of
$L^2(\TT^3\,; \,dy)$ and {\bf not}
with respect to the scalar product of
$L^2_{\epsilon_0,\mu_0}(\TT^3)$.
\end{definition}

Our wave packets have group velocity $\V$ and travel
for long times.   They see the coefficients on 
group lines for long times.
   The averages of the coefficients along such
long rays are important.  A particular combination 
enters in the asymptotic description.

\begin{definition} 
\label{def:gamma}
For each $t,x$ define the linear map
$\gamma(t,x)\in {\rm Hom}(\KK)$ by
\begin{equation}
\label{eq:defgamma}
\gamma(t,x)
\ :=\
\big(
\Pi A_0^0(t,x)\Pi\big)^{-1}\ 
 \Pi\, ( i \omega A_0^1(t,x)+ M(t,x))\, \Pi
 \,.
 \end{equation}
\end{definition}

We assume that the ray averages
\begin{equation}
\label{eq:meangamma}
\lim_{T\to +\infty}\
\frac{1}{T}
\int_0^T\, \gamma(t,x+\V t)\,dt
\ :=\
\widetilde\gamma(x)\,,\qquad
{\rm
exist \ uniformly\ in\ }\ x\in\RR^3.
\end{equation}
We make a fairly weak assumption asserting that 
this limit is attained at an algebraic rate.

\begin{definition}
\label{def:rayaverage}
The function $\gamma$ satisfies the 
{\bf ray average hypothesis} when 
 \eqref{eq:meangamma} holds and
there is a $0\le \beta<1$ so that
for all $\alpha\in \NN\times\NN^3$ the solution $g_\alpha(t,x)$ of
\begin{equation*}
\Big(
\partial_t \ +\
\V.\partial_x\Big)g_\alpha
\ =\
\partial_{t,x}^\alpha\big(
\gamma(t,x) -\widetilde\gamma(x-\V t)
\big)\,,
\qquad
g_\alpha(0,x)\ =\ 0
\end{equation*}
satisfies 
$\langle t\rangle^{-\beta}g_\alpha\in L^\infty([0,\infty[\times\RR^3)$ 
where $\langle t\rangle:=(1+t^2)^{1/2}$.
\end{definition}
\noindent
This hypothesis, introduced in \cite{apr2}, is discussed in 
\S \ref{sec:rayaverages}.
Our main theorem
gives an approximate solution and
an error estimate.
In the  theorem, $\caT$ is a new
variable, a slow time.
In the approximate solution
it is replaced by $ht$.
In addition there is a $\KK$
valued function, $\widetilde w_0(\caT,x)$.
Abusing notation, the value at $(\caT,x)$
is a function of $y$ denoted
$\widetilde w_0(\caT,x,y)$.

\begin{theorem}\label{thm:main-diffr}
Assume that $\gamma$ satisfies the ray average hypothesis
with 
parameter $0\le \beta<1$. 
For $f\in \caS(\R^3;\KK)$ define
$w_0:=\widetilde w_0(\caT,x-\V t)$,
where
$\widetilde w_0\in C^\infty(\R \,;\, \caS(\R^3;\KK))$ 
is the unique
solution of the initial value problem for 
Schr\"odinger's equation
\begin{equation}
\label{eq:transp-schrod}
\Big(\partial_\caT   + 
\frac{1}{2}i \  
\partial^2_\theta\omega(\partial_x,\partial_x)  
+\widetilde \gamma(x) 
\Big)\widetilde w_0 
\ = \
0,
\qquad
\widetilde w_0(0,x)=f(x).
\end{equation}
Define a family of  approximate solutions
$$
\uv^h(t,x)\ :=\ e^{i(\omega(\utheta) t+\utheta .x)/{h}}\  w_0(ht, t,x,x/h)\,
$$
and let $\uu^h$ denote the exact solution of $P^h\uu^h=0$
with $\uu^h|_{t=0}=\uv^h|_{t=0}$.
Then
\begin{equation*}
\forall\, T>0,\,\alpha\in\NN^4,\,  \exists \, C(\alpha,T),\,
\forall \, h\in ]0,1[,
\quad
\sup_{t\in [0,T/h]}\ 
\big\|
(x\,,\,h\,\partial_{t,x}  )^\alpha
(\uu^h - \uv^h)
\big\|_{L^2(\R^3)} \ \leq\  C\,h^{1-\beta}  \,,
\end{equation*}
with  $0\leq\beta<1$ from the ray 
average hypothesis of Definition \ref{def:rayaverage}.
\end{theorem}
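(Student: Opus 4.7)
The plan is to upgrade the one-term ansatz $\uv^h$ to a higher-order three-scale WKB approximation
$$\uv^h_{\rm corr}(t,x) \ =\ e^{i(\omega(\utheta) t + \utheta\cdot x)/h} \sum_{j=0}^{2} h^j\, w_j(ht, t, x, x/h),$$
with $w_0 = \widetilde w_0(\caT, x-\V t)$ the given $\KK$-valued leading profile and $w_1, w_2$ to be constructed. Substituting into $P^h$ and treating $\caT=ht$ and $y=x/h$ as independent variables produces the expansion
$P^h \uv^h_{\rm corr} = e^{i(\omega(\utheta)t+\utheta\cdot x)/h}\bigl(h^{-1}F_{-1} + F_0 + h F_1 + O(h^2)\bigr)$, and we will choose $w_1, w_2$ to make the first three terms vanish and control the rest in $L^2$ uniformly on $t\in[0,T/h]$.

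At order $h^{-1}$ one obtains $F_{-1} = \LL(\omega(\utheta),\utheta,y,\partial_y)\,w_0$, which vanishes since $w_0\in\KK$ by hypothesis. At order $h^0$ one finds $F_0 = \LL w_1 + (\partial_t + A_0^{0,-1}A_j\partial_{x_j})$-type terms acting on $w_0$; applying $\Pi$ kills $\LL w_1$ and the standard computation (identifying $-\Pi A_0^0\Pi$-weighted $\partial_\theta \LL$ at $\utheta$ with the group velocity) yields the transport equation $(\partial_t + \V\cdot\partial_x)w_0 = 0$, automatically satisfied by the ansatz $w_0=\widetilde w_0(\caT,x-\V t)$. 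The equation $\LL w_1 = -(I-\Pi)F_0$ can then be solved on the orthogonal complement of $\KK$ in $L^2(\TT^3;dy)$, using the Fredholm alternative for the antiselfadjoint closure of $\LL$. At order $h^1$, the solvability condition $\Pi F_1 = 0$ (after expanding to second order in $\theta$ around $\utheta$ and using $\partial_\theta^2\omega$) produces the Schr\"odinger equation of \eqref{eq:transp-schrod}, but with \emph{pointwise} coefficient $\gamma(t,x)$ in place of the ray average $\widetilde\gamma(x-\V t)$. I invoke the ray-average hypothesis to absorb the discrepancy into a third corrector $h^2 g(t,x,y)$ whose generator is exactly the transport operator in Definition~\ref{def:rayaverage}.

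With these choices, the remaining residual satisfies $\|P^h \uv^h_{\rm corr}(t,\cdot)\|_{L^2} \leq C h^2 \langle t\rangle^{\beta}$, the growth coming entirely from the ray-average corrector. Since $P^h$ is symmetric hyperbolic with $A_0^h$ uniformly positive definite and $M^h = O(h)$, the standard $L^2$ energy identity combined with Duhamel gives
$$\|\uu^h - \uv^h_{\rm corr}\|_{L^2}(t) \ \leq\ C\int_0^{t} h^2 \langle s\rangle^{\beta}\, ds \ \leq\ C h^2 (T/h)^{1+\beta} \ \leq\ C h^{1-\beta}$$
for $t\in[0,T/h]$. Because $\|\uv^h_{\rm corr}-\uv^h\|_{L^2} = O(h)$, the triangle inequality yields the $\alpha=0$ case.

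For the full semiclassical norm with $(x,h\partial_{t,x})^\alpha$, I commute these operators through the construction: the $x$-weights act on the $x$-Schwartz profiles and commute with $P^h$ modulo bounded zeroth-order terms, while $h\partial_t$ applied to the fast phase produces the bounded factor $i\omega(\utheta)$ plus clean $h$-corrections, so the same residual bound holds for the derived quantities. To pass from $L^2$ and $h\partial_t$ estimates to full $h\partial_x$ control in the presence of the infinite-dimensional static kernel, I invoke the semiclassical coercivity estimate (Theorem~\ref{thm:coercivity1}): the $h\partial_t$ bound plus an $h\dive$ bound (inherited from $\partial_t\dive(A_0^h u^h) = \dive(M^h u^h)$ and the compatible initial data) controls the remaining spatial derivatives by the structured ellipticity on the complement of the kernel. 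Iterating this scheme for all $\alpha$ closes the estimate. The main obstacle is the combined handling of the ray-average corrector and the non-ellipticity: the corrector must be placed at exactly the right order so that its $\langle t\rangle^\beta$ growth is compensated by the $h^2$ prefactor on the time interval $[0,T/h]$, while the derivative estimates cannot use naive elliptic regularity and require the structured coercivity of Theorem~\ref{thm:coercivity1} at every order.
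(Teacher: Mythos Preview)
Your overall architecture is exactly that of the paper: build a three-term corrected ansatz $v^h_{\rm corr}$, force $r_{-1}=r_0=r_1=0$, obtain a residual $O(h^2\langle t\rangle^\beta)$, apply the $L^2$ stability of Theorem~\ref{thm:stability1} (with growth rate $O(h)$, hence uniform on $[0,T/h]$) together with Duhamel, and remove the correctors by the triangle inequality. Your use of Theorem~\ref{thm:coercivity1} for the higher semiclassical derivatives is also the paper's mechanism, packaged there as Theorem~\ref{thm:stability1} and Lemma~\ref{lem:weighted}.

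There is, however, a genuine gap in the placement of the ray-average corrector. You write that the discrepancy $\gamma(t,x)-\widetilde\gamma(x-\V t)$ is absorbed ``into a third corrector $h^2 g(t,x,y)$''. That cannot work. The discrepancy sits in $\Pi r_1$, and an $h^2$ term in the ansatz contributes to $r_1$ only through $h^{-1}\LL(h^2 g)=h\LL g$, whose $\Pi$-part vanishes identically since $\Pi\LL=0$. Hence no $h^2$ addition can touch $\Pi r_1$. If you leave $\Pi r_1=(\gamma-\widetilde\gamma)\,w_0$ (up to the $\Pi A_0^0\Pi$ factor) in the residual, the residual is $O(h)$ and bounded in $t$, and Duhamel over $[0,T/h]$ gives only an $O(1)$ error, not $O(h^{1-\beta})$. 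Your stated residual bound $Ch^2\langle t\rangle^\beta$ is therefore not a consequence of your construction as written.

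The fix is the one the paper uses: exploit the freedom in $\Pi w_1$, which your step ``solve $\LL w_1=-(I-\Pi)F_0$'' leaves undetermined. Set $\Pi w_1=g(t,x)\,w_0$ with $g$ the ${\rm Hom}(\KK)$-valued solution of $(\partial_t+\V\cdot\partial_x)g=\gamma-\widetilde\gamma(\,\cdot\,-\V t)$, $g(0)=0$. Then $\Pi\MM\Pi w_1=\Pi A_0^0\Pi(\partial_t+\V\cdot\partial_x)\Pi w_1$ exactly cancels the discrepancy in $\Pi r_1$, and the ray average hypothesis gives $|g|\le C\langle t\rangle^\beta$. Now $w_1$, and hence $(I-\Pi)w_2=-\QQ(\MM w_1+\NN w_0)$, grow like $\langle t\rangle^\beta$; this growth is what produces the residual bound $C h^2\langle t\rangle^\beta$ through $r_2,r_3$. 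Note also that with this correct placement one has $\|v^h_{\rm corr}-\uv^h\|=O(h\langle t\rangle^\beta)=O(h^{1-\beta})$ on $[0,T/h]$, not $O(h)$ as you wrote; this still closes since it matches the target rate.
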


\begin{remark}
{\bf i.}  The operator
 $\Pi A_0^0\Pi$ is positive definite on $\KK$.
When $M=0$,
mulitplying  
the Schr\"odinger equation (1.9) 
by $\Pi A_0^0\Pi$ shows that  
$\int \langle \Pi A_0^0 \Pi \widetilde w_0\,,\,\widetilde w_0 \rangle\,dx$
is conserved.
{\bf ii.}   The principal part of equation 
\eqref{eq:transp-schrod}
is scalar. Coupling occurs through $\widetilde \gamma$
(see Remark \ref{rmk:scalar}).
\end{remark}

\vskip.2cm

\no
{\bf Acknowledgements.}  The research of G. Allaire was  partially supported by 
the DEFI project at INRIA Saclay Ile de France.
G. Allaire and J. Rauch were partially supported
  by the Chair Mathematical modelling and numerical simulation, F-EADS - Ecole Polytechnique - INRIA.
J. Rauch was partially
supported by the U.S. National Science Foundation
under grant  NSF-DMS-0104096 and by the GNAMPA.
  M. Palombaro and
J. Rauch  thank the
CMAP at the \'Ecole Polytechnique and the Centro di Ricerca Matematica 
``Ennio De Giorgi" 
 for their  hospitality.

\section{$L^2(\RR^3)$ estimates}
\label{sec:energy}

Suppose given 
real symmetric
permittivities 
$$
\epsilon^h(t,x)
\,,\,
\mu^h(t,x)
\  \in \ 
C^\infty\big(\RR^{1+3}\times]0,1[_h\,;\, {\rm Hom}(\R^3) \big)
$$
satisfying the positivity constraint
\begin{equation}
\label{eq:posdef}
\forall\ t,x,h\qquad
0
\ <\ 
cI
\ \le\
\epsilon^h(t,x)\,,\,\mu^h(t,x)
\ \le \
CI\,.
\end{equation}
Consider the dynamic Maxwell equations
in a medium that varies on scale $0<h<<1$ 
\begin{equation}
\label{eq:Maxw3}
\partial_t (\epsilon^h(t,x)\, E^h) \ =\ \curl B^h,
\qquad
\partial_t (\mu^h(t,x) B^h) \ =\ -\,\curl E^h\,.
\end{equation}
This is a symmetric hyperbolic system.    
The energy identity for solutions is
\begin{equation}
\label{eq:energy}
\partial_t\int_{\RR^3} 
 \langle E^h,\epsilon^h E^h\rangle +  \langle B^h,\mu^h B^h\rangle 
 \ dx
\ =\ 
- \int_{\RR^3} 
 \langle E^h,\partial_t \epsilon^h E^h\rangle\ +\
\langle B^h,\partial_t \mu^h B^h\rangle\ dx\,.
\end{equation}
Large time derivatives of $\epsilon^h, \mu^h$
can lead to rapid growth of energy.
On the other hand if $\partial_t\epsilon^h,\partial_t\mu^h$ are bounded
(resp.   $\Op(h)$)
one has uniform estimates for the $L^2$ norm
for $t=\caO(1)$  (resp. $t=\caO(1/h)$).  Corresponding 
estimates for derivatives is 
subtle  because the 
coefficients are rapidly varying in space.   Our 
strategy is
to derive estimates for time derivatives 
and for $\div E^h, \div B^h$.
Then estimate
spatial derivatives using an elliptic estimate from
 the next 
section.

\section{Coercivity}
\label{sec:coercivity}

The generator of the dynamic equations \eqref{eq:maxwelldiffractive}
is not elliptic.  However, the special structure of Maxwell's
equations yields supplementary bounds on 
$\dive E$ and $\dive B$.   
The over determined system consisting of the generator
together with divergence is elliptic.
For problems with coefficients oscillating in 
$x$ on scale $h$, estimates in 
semiclassical Sobolev spaces are natural.  

\begin{definition}
\label{def:hsobolev}
The semiclassical Sobolev norm $H^m(\RR^3)$
denoted $\| \cdot\|_{H^m_h(\R^3)}$
is defined by
$$
\bigg(
\sum_{ |\alpha|\le m }
\int \big|(h\partial_x)^\alpha u\big|^2\ dx
\bigg)^{1/2}\,.
$$
For a function $u(t,x)$, integer $m\ge 0$ and
$h\in ]0,1[$ define the semiclassical norm
with 
derivatives in space and time,
\begin{equation}
\label{eq:normdef}
\big\|
u(t)\big\|_{m,h}^2
\ :=\
\sum_{|\alpha|\le m}
\ \big\| (h\,\partial_{t,x}  )^\alpha
  u(t)
\big\|_{L^2(\RR^3)}^2\,.
\end{equation}
Denote by $C^k_h(\RR^{1+3})$ the set of families
$\{w^h\,:\,0<h<1\} \subset C^k(\RR^{1+3})$  so that
\begin{equation}
\label{eq:cinfnorm}
\sup_{0<h<1}\ 
\sum_{|\alpha|\le k}
\ 
\big\| (h\,\partial_{t,x}  )^\alpha
  w^h
\big\|_{L^{\infty}(\RR^{1+3})}
\ <\ \infty.
\end{equation}
The same notation is used  for functions valued
in any finite dimensional real or complex vector space.
The left hand side of \eqref{eq:cinfnorm} serves as norm
making $C^k_h$ a Banach space.
 The Fr\'echet space $C^\infty_h$ is defined as $\cap_kC^k_h$.
\end{definition}

\begin{theorem}
\label{thm:coercivity1}  For $\epsilon^h(t,x), \mu^h(t,x)\in 
C^\infty_h(\RR^{1+3})$ and $m\in \NN$
there is a constant 
$C(m)$ so that for all $E,B\in H^m(\R^3)$, $t\in \R$, and $h>0$
$$
\big\| E(t)\big\|_{H^m_h(\R^{3})}
\le 
C(m)\,
\Big(
\big\| h\,\curl E(t)\big\|_{H^{m-1}_h(\R^3)}
+
\big\| h\,\div (\epsilon^h(t,x) E(t))\big\|_{H^{m-1}_h(\R^3)}
 +
\big\| E(t)\big\|_{H^{m-1}_h(\R^3)}
\Big)\,,
$$
$$
\big\| B(t)\big\|_{H^{m}_h(\R^3)}
\le 
C(m)\,
\Big(
\big\| h\,\curl B(t)\big\|_{H^{m-1}_h(\R^3)}
+
\big\| h\,\div (\mu^h(t,x) B(t))\big\|_{H^{m-1}_h(\R^3)}
 +
\big\| B(t)\big\|_{H^{m-1}_h(\R^3)}\Big)\,.
$$
\end{theorem}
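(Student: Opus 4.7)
The plan is to establish the bound as a semiclassical elliptic estimate for the over-determined first-order system
$$
L_{\epsilon^h} E \ :=\  (h\,\curl E,\ h\,\div(\epsilon^h E))\,,
$$
whose principal symbol $\sigma(x,\xi) v = (i\,\xi\wedge v,\ i\,\xi\cdot\epsilon^h(x) v)$ is injective for $\xi\ne 0$. Indeed $\xi\wedge v = 0$ forces $v = \alpha\xi$, and then $\xi\cdot\epsilon^h v = \alpha\,(\xi\cdot\epsilon^h\xi)\ne 0$ for $\alpha\ne 0$ by positive definiteness. Writing $v = \alpha\hat\xi + w$ with $w\perp\hat\xi$, a short case split on the size of $\alpha(\hat\xi\cdot\epsilon^h\hat\xi) + \hat\xi\cdot\epsilon^h w$ relative to $|\alpha|$ yields the uniform pointwise lower bound
$$
|\xi\wedge v|^2 + |\xi\cdot\epsilon^h(x) v|^2 \ \ge\  c\,|\xi|^2\,|v|^2\,,
$$
with $c > 0$ depending only on the spectral bounds for $\epsilon^h$.

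I first prove the base case $m = 1$. Integration by parts based on the vector identity $\Delta E = \nabla\div E - \curl\curl E$ gives the global identity
$$
\|h\,\nabla E\|_{L^2}^2 \ =\ \|h\,\div E\|_{L^2}^2 + \|h\,\curl E\|_{L^2}^2\,.
$$
Since $\epsilon^h$ is matrix valued one cannot algebraically recover $h\,\div E$ from $h\,\div(\epsilon^h E)$ pointwise. Instead, the pointwise ellipticity of the preceding paragraph, promoted by a semiclassical G{\aa}rding-type argument applied to $L_{\epsilon^h}$, yields
$$
\|h\,\curl E\|_{L^2}^2 + \|h\,\div(\epsilon^h E)\|_{L^2}^2 \ \ge\  c'\,\|h\,\nabla E\|_{L^2}^2 - C\,\|E\|_{L^2}^2\,.
$$
The $\|E\|_{L^2}^2$ term absorbs commutators between $\epsilon^h$ and differentiation, which are $\Op(1)$ in operator norm because $h\,\partial_x\epsilon^h$ is uniformly $L^\infty$-bounded by the $C^\infty_h$ hypothesis. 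Rearranging and taking square roots gives the $m = 1$ case.

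To pass from $m = 1$ to general $m\ge 1$, I apply the $m = 1$ estimate to the vector fields $(h\,\partial_{x})^\alpha E$ for each $|\alpha| = m - 1$. Since $h\,\curl$ commutes with $(h\,\partial_{x})^\alpha$ one has $\|h\,\curl (h\,\partial_{x})^\alpha E\|_{L^2} \le \|h\,\curl E\|_{H^{m-1}_h}$. The commutator $[(h\,\partial_{x})^\alpha,\ h\,\div(\epsilon^h\,\cdot)]$ expands into a sum of terms of the form $\bigl((h\,\partial_{x})^\beta\epsilon^h\bigr)\,(h\,\partial_{x})^{\alpha-\beta}\partial_j E_k$ with $\beta\ne 0$, each $L^2$-bounded by $C\|E\|_{H^{m-1}_h}$ by the $C^\infty_h$ hypothesis on $\epsilon^h$. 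Summing over $|\alpha| = m - 1$ and combining with the lower-order term from the $m = 1$ estimate yields the $H^m_h$ bound. The statement for $B$ is identical with $\epsilon^h$ replaced by $\mu^h$.

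The main obstacle is the G{\aa}rding step: the principal part is matrix valued and its coefficients oscillate on the same scale $h$ as the differentiations, so naive freezing of coefficients on balls of any fixed radius is ineffective. The cleanest route is via semiclassical pseudodifferential calculus applied to the matrix-valued symbol $\sigma$, exploiting the uniform pointwise lower bound established in the first paragraph together with the standard $L^2$-boundedness of the zeroth-order Weyl remainder; any subsequent higher-$m$ bookkeeping reduces, via the commutator expansion above, to this single elliptic step.
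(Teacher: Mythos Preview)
Your argument is essentially correct: the pointwise symbol bound you derive is exactly the ellipticity needed, and a semiclassical G{\aa}rding inequality (via semiclassical pseudodifferential calculus, as you suggest) does yield the $m=1$ case, after which your commutator bootstrap works. But you have made the problem harder than it is, and the difficulty you flag as the ``main obstacle'' can be removed entirely by a change of variables.

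The paper's proof proceeds by first proving the \emph{non}-semiclassical estimate (its Lemma~\ref{lem:coercivity}): for a fixed coefficient $\underline\epsilon(x)$ with $\partial_x^\alpha\underline\epsilon\in L^\infty$, classical G{\aa}rding's inequality applied to the quadratic form $\int |\div(\underline\epsilon E)|^2 + |\curl E|^2\,dx$ gives $\|E\|_{H^m}\le C(m)(\|\curl E\|_{H^{m-1}}+\|\div(\underline\epsilon E)\|_{H^{m-1}}+\|E\|_{H^{m-1}})$, using the same pointwise symbol bound you established. Then the semiclassical estimate follows by \emph{rescaling}: set $\widetilde E(y):=E(hy)$ and $\widetilde\epsilon(y):=\epsilon^h(t,hy)$. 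The hypothesis $\epsilon^h\in C^\infty_h$ means precisely that $\partial_y^\alpha\widetilde\epsilon$ is bounded uniformly in $h$, so Lemma~\ref{lem:coercivity} applies to $\widetilde E,\widetilde\epsilon$ with an $h$-independent constant, and undoing the scaling converts every $H^k$ norm into the corresponding $H^k_h$ norm (the Jacobian factor $h^3$ is the same on both sides and cancels).

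What each approach buys: the paper's scaling trick reduces the semiclassical statement to a classical elliptic estimate with no $h$ at all, so ordinary G{\aa}rding suffices and no semiclassical calculus is needed. Your route is more robust in principle (it would survive situations where scaling is unavailable, e.g.\ on a fixed domain), but here it introduces machinery that the problem does not require.
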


\begin{proof}  The inequalities for $E$ and $B$ are identical so it suffices
to consider $E$.  The variable $t$ is simply a parameter so it suffices to
consider $t$ fixed.  The key estimate is the following.  

\begin{lemma}
\label{lem:coercivity}
If $\uepsilon(x)$
is a symmetric matrix valued function 
so that $\uepsilon \ge cI>0$ for all $x$,
and $\partial_x^\alpha\uepsilon\in L^\infty(\RR^3)$
for all $\alpha\in \NN^3$,
then for each $m\in \NN$ there is a constant $C(m)$
so that for all $E\in H^m(\R^3)$
\begin{equation}
\label{eq:elliptic}
\big\| E\big\|_{H^m(\R^{3})}
\ \le \
C(m)\,
\Big(
\big\| \curl E\big\|_{H^{m-1}(\R^{3})}
\ +\ 
\big\| \div (\underline\epsilon(x) E)\big\|_{H^{m-1}(\R^{3})}
\ +\
\big\| E\big\|_{H^{m-1}(\R^{3})}
\Big)\,.
\end{equation}
\end{lemma}

\begin{proof}   The integrand in 
$$
\int_{\RR^3}
|\div(\uepsilon(x)\, E)|^2
\ +\ 
|\curl E|^2\ dx
$$
is a quadratic form in $E$ and its first derivatives.   The terms
quadratic in the derivatives of $E$ have the form
$$
\sum_{1\le i,j\le 3}\ \big \langle a_{i,j}(x)\, \partial_i E\,,\, \partial_j E\big\rangle\
$$
with uniquely determined real  matrix valued 
functions $a_{i,j}(x)$ with $a_{j,i}$ equal to the transpose of 
$a_{i,j}$.
  Introduce the the symbol
$
\sum_{i,j} a_{i,j}(x) \xi_i\xi_j 
  $.   
  The definition implies that for any  $x$, $\bfe$ and $\xi$
  in $\RR^3$
  \begin{equation}
  \label{eq:quadform}
 \sum_{i,j} \big\langle  a_{i,j}(x)\xi_i\xi_j \bfe\,,\, \bfe \big\rangle
   \ =\ 
| 
\langle\xi\,,\,\uepsilon(x)\bfe\rangle
|^2
\ +\ 
|
\xi\wedge \bfe|^2\,.
\end{equation}
Choose $0<c\le 1$ so that for all $x$, $\uepsilon(x)\ge cI$.
The lemma is a consequence of G\"arding's inequality once
we prove that for all real
$x,\bfe,\xi$
\begin{equation}
\label{eq:pregarding}
| 
\langle\xi\,,\,\uepsilon(x)\bfe\rangle
|^2
\ +\ 
|
\xi\wedge \bfe|^2
\ \ge\
c\, |\xi|^2\, |\bfe|^2/2\,.
\end{equation}
By homogeneity it suffices to consider
$|\xi |=1$.

Decompose $\bfe=\bfe_\parallel + \bfe_\perp$
into parts parallel and perpendicular to $\xi$. 
The definition of $c$ yields
\begin{equation}
\label{eq:firsthalf}
|\bfe_\perp|^2 \le  |\bfe |^2/2
\quad
\Longrightarrow
\quad
|
\langle \xi\,,\, \uepsilon(x) \bfe_\parallel
\rangle
|^2
\ \ge \  
c\, | \bfe_\parallel |^2
\ =\ 
c\big(|\bfe|^2-|\bfe_\perp|^2
\big)
 \ \ge\ c\,
 |\bfe|^2/2
\,.
\end{equation}
   On the other hand since $c\le 1$
\begin{equation}
\label{eq:secondhalf}
|\bfe_\perp|^2\ge|\bfe|^2/2
\quad
\Longrightarrow
\quad
|\xi\wedge \bfe|^2 \ =\ 
|\bfe_\perp|^2
\ \ge\ 
|\bfe|^2/2
\ \ge\ 
c\,|\bfe|^2/2\,.
\end{equation}
Estimates \eqref{eq:firsthalf}
and \eqref{eq:secondhalf}
imply
\eqref{eq:pregarding}
completing the proof of the lemma.
\end{proof}

Scaling shows that \eqref{eq:elliptic} implies the
 $h$ dependent coercivity of Theorem \ref{thm:coercivity1}.
\end{proof}

\section{Stability}

\begin{theorem} 
\label{thm:stability1}
Let $P^h$, be the operator  \eqref{eq:maxwelldiffractive} with 
$\epsilon^h, \partial_t\epsilon^h,\mu^h,\partial_t\mu^h  , M^h\in C^\infty_h( \RR^{1+3} )$.
If $f,g\in H^\infty(\R^3):=\cap_sH^s(\RR^3)$ then there is a unique family of solutions
$u^h=(E^h,B^h)\in C^\infty(\R\,;\,H^\infty(\R^3))$ to 
$P^hu^h=0$, 
with $u^h(0) = (f,g)$.    For each $ m\in \N$ there is a constant $c(m)$ 
so that with 
$$
C(m,h)\ :=\  \sum_{|\alpha|\le m} \,\big\|
(h\partial_{t,x})^\alpha
(\partial_t\epsilon^h, \partial_t\mu^h , M^h)
\big\|_{ L^\infty( \RR^{1+3}  ) }
$$
one has
\begin{equation}
\label{eq:derivest}
\big\|   u^h(t)  \big\|_{m,h}
\ \le \
c(m)\,
e^{c(m)\, C(m,h)\, t}\
\big\|   u^h(0)  \big\|_{m,h}
\,.
\end{equation}
\end{theorem}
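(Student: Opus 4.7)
The plan is as follows. Existence and uniqueness of $u^h\in C^\infty(\R;H^\infty(\R^3))$ follow from classical theory for symmetric hyperbolic systems: $A_0^h$ is uniformly positive definite and all coefficients are smooth, so the Cauchy problem is well-posed in each $H^m(\R^3)$ and by uniqueness the solutions piece together. The quantitative estimate \eqref{eq:derivest} I prove by induction on $m$. The base case $m=0$ is the energy identity \eqref{eq:energy}, augmented with the contribution of $M^h$: this gives $\partial_t\int\langle u^h,A_0^h u^h\rangle\,dx \le c\,C(0,h)\,\|u^h\|_{L^2}^2$, and Gronwall combined with the positivity \eqref{eq:posdef} closes this case.

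For the inductive step, I first handle pure temporal semiclassical derivatives through a commutator identity. A direct computation yields
\[ [P^h,\, h\partial_t]\,v \,=\, -(\partial_t A_0^h)(h\partial_t v)\,-\,(h\partial_t^2 A_0^h)\,v\,-\,(h\partial_t M^h)\,v, \]
whose coefficients are bounded in $L^\infty(\R^{1+3})$ by constants controlled by $C(m,h)$; note that $h\partial_t^2 A_0^h = (h\partial_t)\partial_t A_0^h \in L^\infty$ because $\partial_t A_0^h \in C^\infty_h$. Iterating, $v_k := (h\partial_t)^k u^h$ solves $P^hv_k = R_k$ with $\|R_k(t)\|_{L^2}\le c\,C(m,h)\sum_{j<k}\|v_j(t)\|_{L^2}$. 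The $m=0$ energy inequality applied to $v_k$ with forcing $R_k$, together with an induction on $k$, yields $\|v_k(t)\|_{L^2}\le c\,e^{c\,C(m,h)\,t}\sum_{j\le k}\|v_j(0)\|_{L^2}$; the initial values are in turn bounded by $\|u^h(0)\|_{k,h}$ by recursively using the equation at $t=0$ to express temporal derivatives in terms of spatial derivatives of the Cauchy data.

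To promote these temporal bounds to the full mixed estimate I perform an inner induction on the spatial order $s$, for $\alpha_0+s=m$, of $\|(h\partial_t)^{\alpha_0}u^h(t)\|_{H^s_h(\R^3)}$; the case $s=0$ is above. For $s\ge 1$, apply Theorem \ref{thm:coercivity1} at fixed $t$ to the fields $(h\partial_t)^{\alpha_0}E^h$ and $(h\partial_t)^{\alpha_0}B^h$. The third term of the coercivity bound, $\|(h\partial_t)^{\alpha_0}u^h\|_{H^{s-1}_h}$, is dominated by $\|u^h\|_{m-1,h}$, covered by the outer induction. The curl term, via the equation $h\,\curl E^h = -(h\partial_t)(\mu^h B^h)-h\,(M^h u^h)_B$ and its analogue for $B$ (where $(M^h u^h)_E$, $(M^h u^h)_B$ denote the first and last three components of $M^h u^h$), becomes a mixed derivative of temporal order $\alpha_0+1$ and spatial order $s-1$, which is the preceding step of the inner induction. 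For the divergence term we exploit $\div\circ\curl=0$: taking the spatial divergence of $P^hu^h=0$ produces $\partial_t\div(\epsilon^h E^h)=-\div(M^h u^h)_E$. When $\alpha_0\ge 1$, this identity (modulo a benign commutator of $\epsilon^h$ with $(h\partial_t)^{\alpha_0}$) reduces $h\,\div(\epsilon^h(h\partial_t)^{\alpha_0}E^h)$ to $-h^2(h\partial_t)^{\alpha_0-1}\div(M^h u^h)_E$ plus terms of total semiclassical order $\le m-1$, both within reach of the outer induction. The pure-spatial case $\alpha_0=0$, $s=m$ is closed by integrating the divergence identity in time,
\[ h\,\div(\epsilon^h E^h)(t) \,=\, h\,\div(\epsilon^h E^h)(0)\,-\,\int_0^t h\,\div(M^h u^h)_E\,ds, \]
whose $H^{m-1}_h(\R^3)$ spatial norm is controlled by the initial data plus $c\,C(m,h)\int_0^t\|u^h(s)\|_{H^m_h(\R^3)}\,ds$. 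Substituting into the coercivity estimate and applying Gronwall closes the bound at order $m$. The main obstacle is precisely this top-order pure-spatial closure: the coefficients oscillate on scale $h$ so $\partial_x\epsilon^h$ is of size $1/h$ and no direct commutator trick is available for spatial derivatives; the chain works only because coercivity trades spatial derivatives for curl and divergence, the equation trades curl for a time derivative already controlled, and $\div\circ\curl=0$ propagates divergence forward in time without worsening the differential order.
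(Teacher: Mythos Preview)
Your proof is correct and follows essentially the same strategy as the paper: the $L^2$ energy identity for $m=0$, commutation only with $h\partial_t$ (since spatial derivatives of the coefficients are $O(1/h)$), recovery of $h\,\curl$ from the equation, control of $h\,\div(\epsilon^h E^h)$ and $h\,\div(\mu^h B^h)$ via the identity $\partial_t\div(\epsilon^h E^h)=-\div(M^h u^h)_E$ integrated in time, and then the coercivity estimate of Theorem~\ref{thm:coercivity1} plus Gronwall. The only difference is organizational: the paper runs a single induction on $m$ and applies the level-$m$ Duhamel estimate directly to $h\partial_t u^h$, whereas you first isolate all pure temporal derivatives via iterated commutators with the $L^2$ estimate and then run an inner induction on the spatial order; both arrangements close in the same way.
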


\begin{remark}
\label{rmk:divergence}
If $M^h=0$, 
one has
$
\partial_t\big(\div\big(\epsilon^h(t,x)\, E^h\big)\big) = 0$
and
$
\partial_t\big(\div\big(\mu^h(t,x)\, B^h\big)\big)  =0$.
In particular one has
\begin{equation}
\label{eq:div4}
\div\big(\epsilon^h(t,x)\, E^h\big)\ =\ 0,
\qquad
\div\big(\mu^h(t,x)\, B^h\big) \ =\ 0\,,
\end{equation}
as soon as these identities hold at $t=0$.
\end{remark}

\begin{remark}\label{rem:diffrsize}
For the analysis on the diffractive scale, the theorem is 
applied with $C(m,h)=\Op(h)$ as $h\to 0$.  In that case
one has uniform bounds for $t=\Op(1/h)$.
\end{remark}

\begin{remark}
This careful accounting of derivatives 
in Theorem \ref{thm:stability1}
is at the heart of  extenting the results of this paper to
 equations
whose coefficients are only finitely differentiable.
\end{remark}

\begin{proof} 
The existence for fixed $h$ is classical.
The estimate for $m=0$ follows from 
Gronwall's inequality
together with
$$
\begin{aligned}
\partial_t \int_{\RR^3} \left(
\langle \epsilon^h \, E^h\,,\, E^h\rangle
 +  
\langle \mu^h \, B^h\,,\, B^h\rangle\right) dx
 = &
- \int_{\RR^3} \left(  
\langle \partial_t\epsilon^h \, E^h\,,\, E^h\rangle
 +
\langle \partial_t\mu^h \, B^h\,,\, B^h\rangle\right) dx \\
& +
2 \int_{\RR^3} \langle u^h, M^hu^h\rangle \, dx \\
 \le C
\Big(
\|\partial_t\epsilon^h(t),\partial_t\mu^h(t)\|_{L^\infty(\RR^3)}
\ & +\ 
\|M^h(t)\|_{L^\infty(\RR^3)}
\Big)
\|(E^h,B^h)(t)\|_{L^2(\RR^3)}\,.
\end{aligned}
$$ 
The proof for arbitrary $m$ is by induction.  
Assume the estimate proved 
for $m$.   Denote by $K^h(t,s)$ the evolution operator associated
to the equation $P^h w=0$.  Precisely, for a distribution 
$g\in {\mathcal D}^\prime(\RR^3)$,
$w(t)=K^h(t,s)g$ is the unique solution of the Cauchy problem
$$
P^h\, w\ = \ 0\,,
\qquad
w\big|_{t=s}\ =\ g\,.
$$
The inductive hypothesis is a bound
$$
\|K^h(t,s)g\|_{ m,h }
\ \le \
c(m)\, e^{  c(m) C(m,h) |t-s| }\,
\|  g  \|_{ H^m_h(\RR^3) }
\,.
$$
The Duhamel relation
$$
v^h(t) \ =\ K^h(t,0)v^h(0) 
\ +\ 
\int_0^t K^h(t,s)\ P^h(v^h)(s)\ ds
$$
then yields the estimate
\begin{equation*}
\|  v^h(t) \|_{m,h}
\le
c(m)
\Big(
e^{c(m)C(m,h) t }
\|  v^h(0) \|_{m,h}
 + 
\int_0^t e^{c(m)C(m,h)(t-s)}
\ 
\| P^h(v^h)(s) \|_{m,h}
\ ds
\Big)
.
\end{equation*}
The key point is that one cannot simply 
apply the estimate
at level $m$ to $v^h:=h\partial u^h$.
Doing so yields
\begin{equation*}
\label{eq:duhamel}
\| h\partial u^h(t) \|_{m,h}
\le
c(m)
\Big(
e^{c(m)C(m,h) t }
\| h\partial u^h(0) \|_{m,h}
 + 
\int_0^t e^{c(m)C(m,h)(t-s)}
\ 
\| P^h(h\partial u^h)(s) \|_{m,h}
\ ds
\Big)
.
\end{equation*}
Write $P^h (h\partial)=(h\partial)P^h + [P^h,h\partial]$.
The first term 
vanishes when applied to $u$.
The commutator  $[P^h,h\partial]$ is 
$$
h\partial_t
\begin{pmatrix}
\partial \epsilon^h & 0 
\cr
0 & \partial\mu^h
\end{pmatrix}
 \ +\ 
 h\,\partial M^h\,.
$$
If $\partial$ were a derivative with respect to $x$
then $\partial_x\{\epsilon^h, \mu^h\}\sim 1/h$
leading to an unacceptably large contribution.
Instead of estimating all derivatives we estimate
only $\partial_t$, $\div\{\epsilon^h E^h,  \mu^hB^h\}$, and $\curl$.
The time derivatives of $\epsilon^h,\mu^h$
are bounded.   The
divergence and curl play a special role
in Maxwell's equations.  The remaining
spatial derivatives are recovered using coercivity.
\footnote{The use of  $\partial_t, \div ,\curl$
is reminiscent of the use of $\partial_t +\bfv\partial_x, \div,\curl$,
and tangential derivatives
for the inviscid compressible Euler equations
in \cite{nishidarauch}.
}

For $\partial_t$
compute
$$
[P^h,h\partial_t]
\ =\
\begin{pmatrix}
\partial_t \epsilon^h & 0
\cr
0 & \partial_t\mu^h
\end{pmatrix}
h\partial_t
 \ +\
 h\,\partial_t M^h\,.
$$
Estimate
$$
\| P^h(h\partial_t u^h)(s) \|_{m,h}
\ \le\
C(m+1,h)\,
\|u^h(s)\|_{m+1,h}
\,.
$$
The Duhamel estimate yields
\begin{equation}
\label{eq:duhamel2}
\| h\partial_t u^h(t) \|_{m,h}
\le
c(m)
\Big(
e^{c(m)C(m,h) t }
\| u^h(0) \|_{m+1,h}
 +
\int_0^t e^{c(m)C(m,h)(t-s)}
\
C(m+1,h)
\| u^h(s) \|_{m+1,h}
\ ds
\Big)
.
\end{equation}

Write the Maxwell equations as
$$
\big(
h\,\curl B^h\,,\,
-h\,\curl E^h
\big)
\ =\
h\partial_t (\epsilon^h\, E^h\,,\,
\mu^h\, B^h)  + h\,M^h\, u^h\,.
$$
Therefore
\begin{equation*}
\big\|
h\,\curl E^h(t)\,,\,
h\,\curl B^h(t)
\big\|_{m,h}
\ \le \
\Big(
{\rm r.h.s.\  of\  }
\eqref{eq:duhamel2}
+ \   \|  h\,  u^h(t) \|_{m,h}
\Big)
C(m,h)
\,.
\end{equation*}
 Use the
fundamental theorem of calculus  to estimate
$$
\|  h\, u^h(t)\ \|_{m,h} 
\le 
\|  h\, u^h(0)\ \|_{m,h} 
 + 
\int_0^t
\|  h\partial_t u^h(s) \|_{m,h}\ ds\,.
$$
Wasting a derivative in the first term yields 
$$
\le 
\| u^h(0)\ \|_{m+1,h} 
 + 
\int_0^t
\|   u^h(s) \|_{m+1,h}\ ds
.
$$
Combining the last four assertions yields
\begin{equation}
\label{eq:curl}
\big\|
h\,\curl E^h(t)\,,\,
h\,\curl B^h(t)
\big\|_{m,h}
\ \le \
{\rm r.h.s.\  of\  }   
\eqref{eq:duhamel2}
\,.
\end{equation}

Next estimate $\|h\, \div\, \epsilon^h E^h,h\,\div\, \mu^h B^h\|_{m,h}$.   Write
$M^h$ in block form with $3\times 3$ blocks
$$
M^h\ =\ 
\begin{pmatrix}
M^h_{11} & M^h_{12}
\cr
M^h_{21} & M^h_{22}
\end{pmatrix}\,.
$$
Therefore
$$
\partial_t \big(\div (\epsilon^h E^h)\big)
\ =\
\div (\epsilon^h E^h)_t \ =\ 
- \div (M^h_{11} E^h + M^h_{12}B^h)\,.
$$
Integrate to find
\begin{equation*}
\begin{aligned}
\| h\, \div (\epsilon^h E^h)(t)\|_{m,h}
&\ \le \
\| h\, \div (\epsilon^h E^h)(0)\|_{m,h}
\ +\ 
\int_0^t\|M^h_{11} E^h(s) + M^h_{12}B^h(s)\|_{m,h}\ ds
\cr
&\ \le \
C(m,h) \,  \| u^h(0)\|_{m+1,h}
\ +\ 
\int_0^t  C(m,h) \|u^h(s)\|_{m,h}\ ds
\,.
\end{aligned}
\end{equation*}
Performing the  analogous estimate for $h\,\div (\mu^h B^h)$ and replacing 
$\|  \   \|_{m,h}$ on the right by the larger
$\|  \  \|_{m+1,h}$
 yields
 \begin{equation}
\label{eq:divergence}
\| h\, \div (\epsilon^h E^h)(t) \,,\,  
h\,\div(\mu^h B^h) 
\|_{m,h}
\ \le \
{\rm r.h.s.\  of\  }   
\eqref{eq:duhamel2}
\,.
\end{equation}

Combining \eqref{eq:duhamel2}, \eqref{eq:curl} and \eqref{eq:divergence}, the inductive 
hypothesis, and 
 the coercivity estimate from Theorem \ref{thm:coercivity1}
yields
\begin{equation}
\label{eq:duhamel3}
\| u^h(t) \|_{m+1,h}
\le
c(m)
\Big(
\| u^h(0) \|_{m+1,h}
 + 
\int_0^t e^{c(m)C(m,h)(t-s)}
\,
C(m+1,h)
\| u^h(s) \|_{m+1,h}
\, ds
\Big)
.
\end{equation}
The theorem follows from Gronwall's
inequality.
\end{proof}

\section{Stationary solutions}

When $\epsilon$ and $\mu$ depend only on $x$, 
there is a conserved $L^2$ norm for the solution of 
\eqref{eq:Maxwell},
\begin{equation}
\label{eq:L2cons}
\partial_t \int_{\R^3} 
\langle E\,,\, \epsilon(x)\,E \rangle \ +\ 
\langle B\,,\, \mu(x) \, B\rangle \ dx
\  =\ 0\,.
\end{equation}
Equations \eqref{eq:Maxwell} have an infinite dimensional
space of stationary solutions.
The set of functions satisfying 
$\div\,\epsilon(x)E =
\div\,\mu(x)B  = 0$
 is invariant and orthogonal in the 
conserved
$L^2$ scalar product to the stationary solutions.

To prove this assertion we use the Fourier Transform. 
For any $u\in L^2(\R^3)$, we have
$$
\hat u(\xi) \ =\ 
(2\pi)^{-3/2}\,
\int_{\R^3} e^{-ix.\xi}\ u(x)\ dx,
\qquad
u(x)
\ =\ 
(2\pi)^{-3/2}\,
\int_{\R^3} e^{ix.\xi}\ \hat u(\xi)\ d\xi\,.
$$
The Fourier Transform is unitary on $L^2(\R^3, dx)$.

\begin{theorem}
\label{thm:stationary1}
When $\epsilon(x),\mu(x)$ depend only on $x$,
$u(x)=(E(x),B(x))\in L^2(\R^3)$ is a 
stationary solution of the dynamic Maxwell
equations \eqref{eq:Maxwell}
if and only if $\curl E=\curl B=0$.     The orthogonal complement
of these data in $L^2_{\epsilon,\mu}(\R^3)$ normed by
$\int \langle\epsilon E,E\rangle + \langle\mu B,B\rangle \, dx$ is invariant
under the flow and consists exactly of the solutions
satisfying \eqref{eq:div4}.
The fields $(\grad\, \phi, \grad\,\psi)$ with 
$\phi,\psi \in H^1(\R^3)$ are $L^2(\R^3)$-dense in the
stationary solutions.
\end{theorem}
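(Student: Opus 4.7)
I would treat the three assertions in order: (i) the stationary characterization, (ii) density of $H^1$-gradients among curl-free $L^2$ fields, and (iii) the characterization of the orthogonal complement together with its flow-invariance. Doing (ii) before (iii) is convenient because density reduces the orthogonality question to an integration by parts. Assertion (i) is immediate from \eqref{eq:Maxwell}: since $\epsilon,\mu$ are time-independent and invertible, $\partial_t u = 0$ is equivalent to $\curl B = 0$ and $\curl E = 0$.

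For (ii) I would argue on the Fourier side. Since the Fourier transform is unitary on $L^2(\R^3)$, the condition $\curl\tilde E = 0$ for $\tilde E\in L^2(\R^3;\R^3)$ is equivalent to $\xi\wedge\widehat{\tilde E}(\xi) = 0$ a.e., hence $\widehat{\tilde E}(\xi) = f(\xi)\,\xi$ where the scalar $f(\xi) := |\xi|^{-2}\,\xi\cdot\widehat{\tilde E}(\xi)$ is well defined a.e.\ on $\{\xi\ne 0\}$ and satisfies $|\xi| f \in L^2(\R^3)$. Define $\widehat{\phi_n}(\xi) := -i f(\xi)\,\chi_{\{|\xi|\ge 1/n\}}(\xi)$. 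On $\{|\xi|\ge 1/n\}$ the bound $|f|^2\le n^2 |\xi|^2 |f|^2$, combined with $|\xi|^2|f|^2\in L^1$, gives $\widehat{\phi_n}\in L^2$; together with $|\xi|\widehat{\phi_n}\in L^2$ this yields $\phi_n\in H^1(\R^3)$. Since $\widehat{\grad\phi_n}(\xi) = i\xi\,\widehat{\phi_n}(\xi) = \xi f(\xi)\,\chi_{\{|\xi|\ge 1/n\}}$ converges to $\widehat{\tilde E}$ in $L^2$ by dominated convergence, the $H^1$-gradients are $L^2$-dense in the curl-free $L^2$ fields.

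For (iii), density reduces orthogonality in $L^2_{\epsilon,\mu}$ to testing against gradients: $(E,B)\in L^2_{\epsilon,\mu}$ is orthogonal to every stationary solution iff $\int\langle\epsilon E,\grad\phi\rangle\,dx + \int\langle\mu B,\grad\psi\rangle\,dx = 0$ for all $\phi,\psi\in H^1(\R^3)$. The distributional integration-by-parts identity $\int\langle\epsilon E,\grad\phi\rangle\,dx = -\langle\div(\epsilon E),\phi\rangle$ (legitimate since $\epsilon E\in L^2$ gives $\div(\epsilon E)\in H^{-1}$) translates this into $\div(\epsilon E) = 0 = \div(\mu B)$ in $\mathcal{D}'(\R^3)$. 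Invariance of this constraint along the flow is the $M^h=0$ case of Remark \ref{rmk:divergence}: $\partial_t\div(\epsilon E) = \div(\curl B) = 0$, and symmetrically for $\mu B$.

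The only real subtlety is the low-frequency behavior in (ii): the scalar potential $f$ need not itself belong to $L^2$ near $\xi=0$, so the cutoff away from the origin is essential for keeping $\phi_n\in H^1$. Once this is handled, the remaining steps are standard Fourier and integration-by-parts manipulations.
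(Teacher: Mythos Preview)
Your proof is correct and follows essentially the same route as the paper: Fourier-side identification of curl-free $L^2$ fields as $\widehat E=\xi f(\xi)$, a low-frequency cutoff to produce $H^1$ potentials approximating in $L^2$, and then the orthogonality characterization via integration by parts against gradients. The only noteworthy difference is that for flow-invariance of the orthogonal complement the paper invokes unitarity of the evolution in $L^2_{\epsilon,\mu}$ (the stationary subspace is trivially invariant, hence so is its orthogonal complement), whereas you argue directly that the divergence constraints are propagated via $\partial_t\div(\epsilon E)=\div\curl B=0$; both are valid and equally short.
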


\begin{proof}  The first assertion is obvious.

The orthogonal complement of the stationary solutions is invariant
because the evolution is unitary.

Next prove the density of gradients.
The field $E$ satisfies
 $\curl E=0$ if and only if $\xi\wedge \widehat E(\xi)=0$, that is 
the Fourier Transform $\widehat E(\xi)$ is parallel to $\xi$ for almost all $\xi$.
For $\xi \ne 0$  $\widehat E = \xi f(\xi)$ uniquely defines the scalar valued
$f(\xi)$.  Choose a smooth cutoff function $0\le \chi\le 1$ vanishing on a neighborhood
of $\xi=0$ and identically equal to 1 outside a compact set.
Then $E$ is the $L^2$-limit as $n$ tends to infinity of the field
with Fourier transform equal to $\chi(n\xi)\, \xi\,f(\xi)$. 
Define $\widehat \phi^n=\chi(n\xi)f$ so $\phi\in H^1(\R^3)$
and $\grad\, \phi^n \to E$ in $L^2$.

Using the density of the Schwartz space ${\mathcal S}(\R^3)$ in $H^1(\R^3)$, shows that
a vector $E$ is orthogonal to the stationary states if and only if 
$$
\int_{\R^3}  \langle \epsilon E, \grad \phi \rangle \ dx \ =\ 0,
\quad
\text{for all}
\quad
\phi\in {\mathcal S}(\R^3)\,.
$$
This is the definition of 
$\div (\epsilon E)=0$ in the sense of tempered 
distributions.  
\end{proof}

\section{The theory of Floquet and Bloch}\label{sec:Bloch-theory}

\subsection{Bloch Transform}
\label{blockspec}

We recall the essentials
of the method of Floquet and Bloch
(see for example 
\cite{flo}, 
\cite{bloch},
\cite{bril}, 
\cite{reedsimon},
\cite{wilcox},
\cite{blp}).  

Write each $\xi\in \RR^3$ as $\bfn +\theta$ with uniquely
determined
$\bfn \in\ZZ^3$ and $\theta\in [0,1[^3$.
Expressing $u(x)$ in terms of its Fourier transform, 
$\hat u(\xi)$, yields
\begin{equation}
\label{eq:block1}
u(x)
 \ = \
 (2\pi)^{-3/2}\,
 \int_{[0,1[^3}
 e^{ i\theta .x}\
 \Big(
 \sum_{\bfn\in \ZZ^3}
 e^{ i \bfn .x} \hat u(\theta +\bfn)\Big)
 \ d\theta\,.
\end{equation}
The parentheses enclose the Fourier
series expansion of a function periodic in $x$ with period $2\pi$.  
Considered as a function of $x$, the integrand is $\theta$-periodic
in the sense of Definition \ref{def:thetaper}. 
Identity \eqref{eq:block1} decomposes $L^2(\RR^3)$
as the direct integral over $\theta$ of the 
Hilbert spaces of  $\theta$-periodic functions
belonging to $L^2_{\rm loc}(\RR^3)$.

\begin{proposition}
\label{prop:blocktransform}
The map that associates to $u$ its Bloch wave expansion $u_\theta$
\begin{equation}
\label{eq:decomp}
L^2(\R^3)\ni u
\quad 
\mapsto
\quad
u_\theta(x) \ :=\ 
 (2\pi)^{-3/2}\,
  e^{ i\theta.x}\
 \Big(
 \sum_{\bfn\in \ZZ^3}
 e^{ i \bfn.x} \hat u(\theta +\bfn)\Big)
 \ \in \ 
L^2(\T^3_\theta)
\end{equation}
yields a unitary decomposition of $L^2(\R^3)$
as the direct
integral over $\theta\in [0,1[^3$ of 
$
L^2(\T^3_\theta)
$.
 The inverse is given by
$$
u(x) \ =\ 
\int_{[0,1[^3} u_\theta(x)\ d\theta
\qquad
{\rm with}
\qquad
\|u\|_{L^2(\R^3)}^2
\ =\ \int_{[0,1[^3}
 \|u_\theta\|_{L^2(\T^3)}^2
 \ d\theta
\,.
$$
The map $u\mapsto e^{-i\theta .x} u_\theta$ 
is a unitary map $ L^2(\R^3)
\to
 L^2([0,1[^3\,;\,L^2(\T^3))
$.
\end{proposition}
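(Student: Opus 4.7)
My plan is to reduce the statement to a composition of three elementary unitary maps: the Fourier transform on $\R^3$, an ``unfolding'' that rewrites $L^2(\R^3)$ as $L^2([0,1[^3;\ell^2(\Z^3))$ by partitioning the frequency space into unit cells $\theta+\Z^3$, and the inverse Fourier series map $\ell^2(\Z^3)\to L^2([0,2\pi[^3)$. The last statement of the proposition (that $u\mapsto e^{-i\theta.x}u_\theta$ is unitary into $L^2([0,1[^3;L^2(\T^3))$) is actually the clean composite; the first statement is obtained from it by re-attaching the factor $e^{i\theta.x}$, which preserves the fiber norm because $|e^{i\theta.x}|=1$.

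First, I would take $u\in \caS(\R^3)$ so that all sums and integrals converge absolutely and everything is pointwise defined. Writing each $\xi\in\R^3$ uniquely as $\theta+\bfn$ with $\theta\in[0,1[^3$ and $\bfn\in\Z^3$, Fubini and Plancherel give
$$
\|u\|_{L^2(\R^3)}^2 \ =\ \|\hat u\|_{L^2(\R^3)}^2\ =\ \int_{[0,1[^3}\sum_{\bfn\in\Z^3}|\hat u(\theta+\bfn)|^2\,d\theta.
$$
For fixed $\theta$, the family $\{(2\pi)^{-3/2}e^{i\bfn.x}\}_{\bfn\in\Z^3}$ is an orthonormal basis of $L^2([0,2\pi[^3)$, so by Parseval
$$
\Big\|(2\pi)^{-3/2}\sum_{\bfn}e^{i\bfn.x}\hat u(\theta+\bfn)\Big\|_{L^2(\T^3)}^2\ =\ \sum_{\bfn}|\hat u(\theta+\bfn)|^2.
$$
Combining the two identities gives $\|u\|_{L^2(\R^3)}^2=\int_{[0,1[^3}\|u_\theta\|_{L^2(\T^3)}^2 d\theta$, which is the isometry claim on the Schwartz class. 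Since $\caS$ is dense in $L^2(\R^3)$, the map $u\mapsto(e^{-i\theta.x}u_\theta)$ extends to an isometry into $L^2([0,1[^3;L^2(\T^3))$.

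To obtain surjectivity, I would reverse the construction. Given $v(\theta,x)\in L^2([0,1[^3;L^2(\T^3))$, expand each fiber $v(\theta,\cdot)$ in the Fourier basis above to obtain coefficients $c_\bfn(\theta)\in L^2([0,1[^3;\ell^2(\Z^3))$ with matching norm, then define $\hat u$ on $\theta+\bfn$ to equal $c_\bfn(\theta)$. Plancherel and the cell decomposition show $\hat u\in L^2(\R^3)$ with the correct norm, and the inverse Fourier transform produces $u\in L^2(\R^3)$ whose Bloch decomposition is $v$. This establishes the unitarity of the final map stated in the proposition; multiplication by $e^{i\theta.x}$ on the fiber then yields the first unitarity statement into the direct integral of $L^2(\T^3_\theta)$.

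Finally, the inversion formula $u(x)=\int_{[0,1[^3}u_\theta(x)\,d\theta$ is exactly the Fourier inversion formula reorganized via the cell decomposition: for $u\in\caS$ one computes directly
$$
\int_{[0,1[^3}u_\theta(x)\,d\theta \ =\ (2\pi)^{-3/2}\int_{[0,1[^3}\!\!\sum_\bfn e^{i(\theta+\bfn).x}\hat u(\theta+\bfn)\,d\theta\ =\ (2\pi)^{-3/2}\int_{\R^3}e^{ix.\xi}\hat u(\xi)\,d\xi\ =\ u(x),
$$
and the general case follows by density. The only real technicality is justifying the direct integral framework and the fiberwise definitions off a set of measure zero in $\theta$; this is handled by working on $\caS$ where everything is classical and extending by the isometry already established. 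No serious obstacle is anticipated; the work is entirely bookkeeping of the normalization constant $(2\pi)^{-3/2}$ and of the correspondence between the cell decomposition of $\R^3$ and the Fourier series on $\T^3$.
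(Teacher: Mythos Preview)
Your argument is correct and is the standard proof of the Bloch--Floquet decomposition. The paper does not actually supply a proof of this proposition: it is stated as a recalled fact with references to \cite{flo}, \cite{bloch}, \cite{bril}, \cite{reedsimon}, \cite{wilcox}, \cite{blp}, so there is nothing to compare against beyond noting that your decomposition into Fourier transform, cell unfolding, and Fourier series is exactly the argument implicit in the identity \eqref{eq:block1} that the paper writes down just before the statement.
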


\begin{remark}
\label{rem:reduction}
The partial derivatives and $2\pi$-translates
 of $\theta$-periodic functions
are $\theta$-periodic and the product of a $\theta$-periodic
function by a $2\pi$-periodic function is $\theta$-periodic.
Therefore partial differential operators with
$2\pi$-periodic coefficients  map 
$\theta$-periodic functions to themselves.
The Bloch decomposition
reduces these operators.
\end{remark}

\subsection{Maxwell's equations}

The method of Floquet-Bloch 
applies to Maxwell's equations
(see for example \cite{blp} and
\cite{sjoberg}).
The delicate part for us is the infinite dimensional kernel
and the degeneration of the 
coercivity estimates as $h\to0$.
From here to the end of this section
the Bloch strategy  
is  used to analyse Maxwell's
equations
\begin{equation}
\label{eq:defgen}
\begin{pmatrix}
\epsilon_0(x)
&0
\cr
0&\mu_0(x)
\end{pmatrix}
\partial_t \ -\ G\,,
\qquad
G\ :=\ \begin{pmatrix}
0 & \curl
\cr
-\curl & 0
\end{pmatrix}
\end{equation}
{\it in the case of periodic} $\epsilon(x)$ and
$\mu(x)$.
It suffices to analyse its action
as a map on
$L^2(\T^3_\theta)$

\begin{proposition}
\label{prop:antisa}
If $A_j$ are symmetric matrices then the 
operator $L=\sum_jA_j\partial_j$  
satisfies
\begin{equation}
\label{eq:antisymmetry}
\forall u,v\in L^2(\T^3_\theta)\cap C^\infty
\qquad
\langle Lu,v\rangle \  =\ - \langle u,Lv \rangle
\,.
\end{equation}
Denote by $\overline L$ the closure of the operator so defined
and by $L^*$ the Hilbert space adjoint.
Then $\overline L$ is antiselfadjoint
with domain equal to the set of
$u\in L^2(\T^3_\theta)$  so that 
$\sum_j A_j\partial_ju\in L^2(\T^3_\theta)$ 
 in the sense
of distributions.
\end{proposition}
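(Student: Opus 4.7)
The plan is to diagonalize $L$ using the Fourier basis of $L^2(\TT^3_\theta)$ adapted to $\theta$-periodicity and read off the closure and the adjoint simultaneously. For the antisymmetry \eqref{eq:antisymmetry}, I would integrate by parts on the fundamental cell $[0,2\pi]^3$: for $\theta$-periodic smooth $u,v$, the phase factors $e^{i\theta\cdot x}$ in $u$ and in $\overline v$ cancel, so $\langle A_j u(x), v(x)\rangle_{\CC^N}$ is $2\pi$-periodic and the boundary terms vanish; symmetry of the constant matrix $A_j$ then shifts it from $\partial_j u$ to $v$, giving \eqref{eq:antisymmetry}.

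Next I would introduce the scalar functions $\varphi_{\mathbf{n}}(x) := (2\pi)^{-3/2} e^{i(\theta+\mathbf{n})\cdot x}$, $\mathbf{n}\in\ZZ^3$. The family $\{\varphi_{\mathbf{n}}\}$ is an orthonormal basis of $L^2(\TT^3_\theta\,;\,\CC)$ consisting of smooth $\theta$-periodic functions. Every $u \in L^2(\TT^3_\theta\,;\,\CC^N)$ expands as $u = \sum_{\mathbf{n}} c_{\mathbf{n}}\varphi_{\mathbf{n}}$ with vector coefficients $c_{\mathbf{n}} \in \CC^N$, and $L$ acts diagonally by $L(c_{\mathbf{n}}\varphi_{\mathbf{n}}) = i\, M_{\mathbf{n}} c_{\mathbf{n}}\varphi_{\mathbf{n}}$, where $M_{\mathbf{n}} := \sum_j(\theta_j + n_j)A_j$ is real symmetric on $\CC^N$. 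The distributional action of $L$ thus has formal Fourier expansion $\sum iM_{\mathbf{n}}c_{\mathbf{n}}\varphi_{\mathbf{n}}$, which by Parseval lies in $L^2$ precisely when $\sum \|M_{\mathbf{n}} c_{\mathbf{n}}\|^2 < \infty$.

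The domain identification is now direct. Given $u \in L^2$ with $Lu \in L^2$ distributionally, the truncations $u_N := \sum_{|\mathbf{n}|\le N} c_{\mathbf{n}}\varphi_{\mathbf{n}}$ are smooth and $\theta$-periodic, and by Parseval $u_N \to u$ and $Lu_N\to Lu$ in $L^2$, hence $u\in D(\overline L)$; the reverse inclusion is automatic since distributional derivatives pass to $L^2$ limits. For the adjoint, pairing $v = \sum d_{\mathbf{n}}\varphi_{\mathbf{n}} \in L^2$ against a smooth finite-sum $u = \sum c_{\mathbf{n}}\varphi_{\mathbf{n}}$ and using symmetry of $M_{\mathbf{n}}$ yields
\[
\langle Lu, v\rangle \ =\ \sum_{\mathbf{n}}\langle i M_{\mathbf{n}} c_{\mathbf{n}}, d_{\mathbf{n}}\rangle_{\CC^N} \ =\ -\sum_{\mathbf{n}}\langle c_{\mathbf{n}}, i M_{\mathbf{n}} d_{\mathbf{n}}\rangle_{\CC^N},
\]
which extends to a bounded functional of $(c_{\mathbf{n}})\in\ell^2$ exactly when $(M_{\mathbf{n}} d_{\mathbf{n}})\in\ell^2$, that is when $Lv\in L^2$ distributionally, and then $L^*v = -Lv$. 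Thus $D(L^*) = D(\overline L)$ and $L^* = -\overline L$, which is antiselfadjointness.

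I do not expect any serious obstacle. The entire argument is a transparent Fourier diagonalization enabled by the fact that the $\theta$-twisted exponentials simultaneously diagonalize every constant-coefficient first-order operator on $L^2(\TT^3_\theta)$. The only step requiring mild care is verifying that the distributional action of $L$ on a generic $L^2$ function agrees with the term-by-term differentiated Fourier series, which is standard since $L$ has constant coefficients and commutes with the orthogonal projections onto finite-dimensional spans of the $\varphi_{\mathbf{n}}$.
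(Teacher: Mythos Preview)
Your proof is correct and takes a genuinely different route from the paper's. The paper proves \eqref{eq:antisymmetry} by the same integration-by-parts idea you sketch, but for the identification of $D(\overline L)$ it uses a mollification argument: given $u\in L^2(\T^3_\theta)$ with $Lu\in L^2$ distributionally, it convolves with a standard mollifier $J_\delta$ (which preserves $\theta$-periodicity and commutes with the constant-coefficient $L$) to produce smooth approximants $u^\delta:=J_\delta u$ with $Lu^\delta=J_\delta(Lu)\to Lu$, hence $u\in D(\overline L)$, and concludes $L^*\subset -\overline L$. You instead diagonalize $L$ in the explicit Fourier basis $\varphi_{\mathbf n}=(2\pi)^{-3/2}e^{i(\theta+\mathbf n)\cdot x}$ and approximate by spectral truncation. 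Your approach is more transparent here because it makes the full spectral structure of $\overline L$ visible at once (each Fourier mode is an eigenspace for $iM_{\mathbf n}$), and it dispenses with any soft closure argument: the domain and the adjoint are read off from a single $\ell^2$ condition. The paper's mollifier argument, on the other hand, is agnostic to the basis and would transfer unchanged to, say, $L^2(\R^d)$ or to operators with nonconstant periodic zeroth-order terms, where a clean Fourier diagonalization is unavailable. Both arguments hinge on the same commutation fact---translation-invariance of $L$---expressed either as $[J_\delta,L]=0$ or as $L(\varphi_{\mathbf n}c)=iM_{\mathbf n}c\,\varphi_{\mathbf n}$.
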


\begin{proof}   
To prove \eqref{eq:antisymmetry},
write $u=e^{i\theta .x} \tilde u$ and similarly $v$ 
with periodic $\tilde u, \tilde v$. Then,
$$
\begin{aligned}
\langle Lu,v\rangle
&\ =\ 
\int _{[0,2\pi[^3}
 \langle A_j \partial_j u\,,\, v \rangle \ dx
\ =\ 
\int _{[0,2\pi[^3} \langle A_j\partial_j (e^{i\theta .x}\tilde u)\,,\, e^{i\theta .x}\tilde v \rangle \ dx
\cr
&\ =\
\int _{[0,2\pi[^3}
 \langle e^{i\theta .x}A_j(\partial_j+i\theta_j) \tilde u\,,\, e^{i\theta .x}\tilde v \rangle \ dx
\ =\
\int _{[0,2\pi[^3} \langle A_j (\partial_j+i\theta_j)  \tilde u\,,\, \tilde v \rangle \ dx
\cr
&\ =\ 
-
\int _{[0,2\pi[^3} \langle \tilde u\,,\, A_j(\partial_j+i\theta_j) \tilde v  \rangle \ dx
\,,
\end{aligned}
$$
the last step by integration by parts and periodic boundary conditions.  

Identity \eqref{eq:antisymmetry}  implies
that $L^*\supset -L$  in the sense that the left hand side
is an extension of the right.   The definition of distribution derivative
implies that $L^*u=f\in L^2(\T^3_\theta)$ if and only if 
$-(\sum_jA_j\partial_j)u=f$ in the sense of distributions.

In that case denote by $J_\delta$ a standard mollifier.
Since $\theta$-periodic functions are invariant under
translations, $J_\delta u\in C^\infty\cap L^2(\T^3_\theta)$.
Since $J_\delta$ commutes with $A_j\partial_j$ so
$u^{\delta} :=J_\delta u$ satisfies $-Lu^\delta = J^\delta f$.
Passing to the limit shows that $u$ belongs to the 
domain of 
$\overline L$ and $\overline Lu=-f$.   Thus $L^*\subset - \overline L$.
\end{proof}

The spaces $L^2(\T^3_\theta)$ depend on $\theta$.
The following proposition allows 
one to apply standard results in perturbation theory.

\begin{proposition}
\label{prop:analyticfamily}
The unitary map $L^2(\T^3)\ni v\mapsto e^{i\,\theta .x}\,v\in L^2(\T^3_\theta)$ 
intertwines the operators
$$
\begin{pmatrix}
0 & \partial_x\wedge
\cr
-\partial_x\wedge & 0
\end{pmatrix}
\,,
\qquad
{\rm and}
\qquad
\begin{pmatrix}
0 & (\partial_x+i\theta)\wedge
\cr
-(\partial_x+i\theta)\wedge & 0
\end{pmatrix}\,.
$$
The former acts on $L^2(\T^3_\theta)$ and the latter
on the $\theta$ independent space $L^2(\T^3)$. 
The latter family of operators depends analytically on $\theta$.
\end{proposition}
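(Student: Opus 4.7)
My plan has two parts, matching the two assertions of the proposition.

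First, I will verify the intertwining by direct computation. Take any $v \in C^\infty(\T^3)$ so that $u := e^{i\theta.x}\, v$ is a smooth $\theta$-periodic function. The product rule gives
\begin{equation*}
\partial_j\big( e^{i\theta.x}\, v\big) \ =\ e^{i\theta.x}\,(\partial_j + i\theta_j)\, v
\end{equation*}
componentwise, so that $\partial_x\wedge(e^{i\theta.x}\, v) = e^{i\theta.x}\,(\partial_x + i\theta)\wedge v$. Applying this in each block entry immediately establishes the intertwining relation on the smooth core. By Proposition \ref{prop:antisa}, the unbounded operator on $L^2(\T^3_\theta)$ is the closure from this smooth core, and the analogous density argument works on $L^2(\T^3)$; extending the intertwining by continuity (using that multiplication by $e^{i\theta.x}$ is unitary and leaves the respective cores invariant) gives the identity on the full domains.

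Second, I will deduce analyticity of the $\theta$-family. On the fixed Hilbert space $L^2(\T^3)$, write
\begin{equation*}
T(\theta) \ :=\ \begin{pmatrix} 0 & (\partial_x + i\theta)\wedge \\ -(\partial_x + i\theta)\wedge & 0 \end{pmatrix}
\ =\ G \ +\ N(\theta),
\end{equation*}
with $G$ the $\theta$-independent operator of \eqref{eq:defgen} on $L^2(\T^3)$ and $N(\theta) := i\bigl(\begin{smallmatrix} 0 & \theta\wedge \\ -\theta\wedge & 0\end{smallmatrix}\bigr)$ just a $6\times 6$ constant matrix multiplier. Since $N(\theta)$ is bounded on $L^2(\T^3)$ with norm $\le C|\theta|$ and depends linearly—hence entirely—on $\theta\in \C^3$, and since its domain is all of $L^2$ (in particular contains $\mathrm{dom}(G)$, which is $\theta$-independent), the family $T(\theta)=G+N(\theta)$ is a holomorphic family of type (A) in the sense of Kato.

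The main (and only) subtlety is the bookkeeping between domains, since the operator on the left side of the intertwining acts on the $\theta$-dependent space $L^2(\T^3_\theta)$. This is handled cleanly by Proposition \ref{prop:antisa}, which identifies the maximal domain intrinsically, so that conjugation by the unitary $e^{i\theta.x}$ maps the maximal domain in $L^2(\T^3_\theta)$ bijectively onto the maximal domain in $L^2(\T^3)$ (namely $\{v\in L^2(\T^3): \partial_x\wedge v\in L^2(\T^3)\}$ in each component), a domain which is indeed independent of $\theta$. No genuine analytic perturbation obstruction arises, and the point of the proposition is precisely to set the stage for applying standard analytic perturbation theory to the Bloch eigenvalue problem in the sections that follow.
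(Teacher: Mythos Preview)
Your proof is correct and follows essentially the same route as the paper's: the intertwining is verified by the product rule $\partial_j(e^{i\theta.x}v)=e^{i\theta.x}(\partial_j+i\theta_j)v$, and the analyticity is immediate since the conjugated family differs from a fixed operator by a bounded perturbation linear in $\theta$. The paper's own proof is a two-sentence sketch that stops at the intertwining of $\partial_j$ with $\partial_j+i\theta_j$ and declares the rest to follow; your version is more explicit about the domain bookkeeping and the Kato type (A) structure, but there is no substantive difference in strategy.
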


\begin{proof} 
The unitary map commutes with multiplication operators
and intertwines
the antiselfadjoint operators $\partial_j$  on 
$L^2(\T^3_\theta)$ and
$\partial_j +i\theta_j$
on $L^2(\T^3)$.  This yields
the desired result.
\end{proof}

The straight forward proof of the next result is omitted.

\begin{proposition}
\label{prop:stationary2}
A function $u(x)=(E(x),B(x))\in L^2(\R^3)$ is a stationary
solution
of \eqref{eq:Maxwell}  if and only if its Bloch wave expansion 
$(E_\theta,B_\theta)\in L^2(\T^3_\theta)$ satisfies
$G\,(E_\theta,B_\theta)=0$ for almost all $\theta\in [0,1[^3$ with
$G$ from \eqref{eq:defgen}.  A function $u$ is 
in the invariant space of functions orthogonal
in $L^2_{\epsilon_0,\mu_0}(\T^3_\theta)$
to these stationary solutions if and only if its expansion satisfies
$\div (\epsilon_0 E_\theta)=\div(\mu_0 B_\theta)=0$ 
in the sense of distributions.
\end{proposition}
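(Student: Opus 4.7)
The plan is to reduce both assertions to fiber-wise statements via the unitary Bloch decomposition of Proposition \ref{prop:blocktransform}, then invoke fiber-wise analogues of Theorem \ref{thm:stationary1}. Both reductions rest on two elementary commutation facts: the decomposition \eqref{eq:decomp} commutes with partial differentiation, $(\partial_j u)_\theta = \partial_j u_\theta$, by termwise differentiation, and with multiplication by $2\pi$-periodic coefficients, $(fu)_\theta = f u_\theta$, by a reindexing argument using that $\hat f$ is supported on $\ZZ^3$.

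For the first assertion, $u$ is stationary iff $Gu = 0$, since $A_0^0$ is pointwise invertible. The commutations above give $(Gu)_\theta = G u_\theta$, so unitarity of the Bloch transform converts $Gu = 0$ in $L^2(\RR^3)$ into $G u_\theta = 0$ in $L^2(\TT^3_\theta)$ for almost every $\theta$.

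For the second assertion, the same commutations yield the splitting of the weighted norm
$$
\int_{\RR^3} \langle \epsilon_0 E, E\rangle \, dx
\ =\
\int_{[0,1[^3}\int_{\TT^3} \langle \epsilon_0 E_\theta, E_\theta\rangle \, dx \, d\theta\,,
$$
and analogously for $\mu_0 B$, so $u$ is orthogonal in $L^2_{\epsilon_0,\mu_0}(\RR^3)$ to every stationary solution iff $u_\theta$ is orthogonal in $L^2_{\epsilon_0,\mu_0}(\TT^3_\theta)$ to $\ker G$ for almost every $\theta$. It then remains to establish on each fiber the torus analogue of Theorem \ref{thm:stationary1}: the orthogonal complement of the curl-free pairs in $L^2_{\epsilon_0,\mu_0}(\TT^3_\theta)$ coincides with $\{(E,B): \div(\epsilon_0 E) = \div(\mu_0 B) = 0\}$. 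Mimicking the proof of Theorem \ref{thm:stationary1}, a $\theta$-periodic $E$ with $\curl E = 0$ has Bloch-Fourier coefficients $\hat u(\theta+\bfn)$ parallel to $\theta + \bfn$, hence is $L^2$-approximated by smooth $\theta$-periodic gradients obtained by cutting off these coefficients away from the origin; integration by parts against such gradients converts the orthogonality condition into the distributional identity $\div(\epsilon_0 E) = 0$, and symmetrically for $B$.

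The only point requiring real care is the zero-mode fiber $\theta = 0$, where nonzero constant vectors also lie in $\ker G$ but are not reachable by gradients of $2\pi$-periodic functions; there orthogonality imposes the additional constraint that the means of $\epsilon_0 E$ and $\mu_0 B$ vanish. Because $\{0\}$ is a single point of $[0,1[^3$, it is negligible in the direct integral and does not affect the ``almost every $\theta$'' conclusion, which is the reason the statement of the proposition records only the divergence conditions.
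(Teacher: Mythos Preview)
The paper omits the proof of this proposition entirely, stating only that ``the straight forward proof of the next result is omitted.''  Your argument is a correct and careful fleshing-out of what the authors presumably had in mind: reduce to the fibers via the unitary Bloch decomposition (Proposition \ref{prop:blocktransform}) together with the commutation of that decomposition with constant-coefficient differential operators and periodic multiplication operators (Remark \ref{rmk:reduction}), then repeat the Fourier-based argument of Theorem \ref{thm:stationary1} fiber by fiber.  Your observation about the exceptional fiber $\theta=0$ is apt and explains why the statement is phrased ``for almost all $\theta$.''

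One small point you could make more explicit: the passage from ``$u$ is orthogonal to every stationary $v$ in $L^2_{\epsilon_0,\mu_0}(\RR^3)$'' to ``$u_\theta\perp\ker G$ in $L^2_{\epsilon_0,\mu_0}(\TT^3_\theta)$ for a.e.\ $\theta$'' requires, in the nontrivial direction, producing a stationary $v$ witnessing failure of orthogonality when $u_\theta$ has nonzero projection onto $\ker G$ on a set of positive measure.  This is handled by taking $v_\theta$ to be the fiber-wise $L^2_{\epsilon_0,\mu_0}(\TT^3_\theta)$-orthogonal projection of $u_\theta$ onto $\ker G$ and reassembling; then $\|v_\theta\|\le\|u_\theta\|$ gives $v\in L^2(\RR^3)$, $Gv=0$ follows fiber-wise, and $\langle u,v\rangle_{L^2_{\epsilon_0,\mu_0}}=\int\|v_\theta\|^2\,d\theta>0$.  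This is routine direct-integral reasoning and is doubtless part of what the authors deemed straightforward.
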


\subsection{Bloch spectral theory}
Consider  fixed periodic
$\epsilon_0(x), \mu_0(x)\in C^\infty(\TT^3)$.
The function
$$
u(t,x)
\ :=\
e^{\lambda t}\
\big(
E(x)\,,\, B(x)\big)
$$
satisfies Maxwell's  equations \eqref{eq:Maxwell}
if and only if
\begin{equation}
\label{eq:evalue2}
\lambda\,\begin{pmatrix}
\epsilon_0(x) & 0
\cr
0 & \mu_0(x)
\end{pmatrix}
\begin{pmatrix}
E(x)\cr B(x)
\end{pmatrix} 
\ =\ 
\begin{pmatrix}
 0 & \curl 
\cr
-\curl & 0
\end{pmatrix}
\begin{pmatrix}
E\cr B
\end{pmatrix}\,.
\end{equation}
In the same way a $\theta$-periodic $u=e^{\lambda t}(E(x),B(x))$
is a $\theta$-periodic solution of Maxwell's equations if and
only if $E,B$ is a solution of \eqref{eq:evalue2} and 
$(E\,,\, B)$ is $\theta$-periodic. 
The function $u$ is then a solution of \eqref{eq:Maxw3}.

When each of $\epsilon_0$ and $\mu_0$ 
is a positive  constant times  the identity
the
change of variable $\widetilde E,\widetilde B = \epsilon_0^{1/2} E, 
\mu_0^{1/2}B$ reduces to the case $\epsilon_0=\mu_0=I$.  For that
case the eigenvalue problem is  exactly solved
in the next example.

\begin{example} 
\label{important-example}
In case $\epsilon_0=\mu_0=I$ the problem
is translation invariant.   Denote by
$T_\ell$ the translation  operator $u(x)\mapsto u(x-\ell)$
acting on $L^2(\T^3_\theta)$.
The antiselfadjoint  $G$ commutes with $T_\ell$ so
the eigenspaces of $T_\ell$ are invariant
by $G$.

For given $k\in \ZZ^3$ denote by $E_k$ the subspace consisting of exponentials
$e^{i k .x} e^{i \theta .x} (\bfe,\bfb)$ when $\bfe, \bfb$ run in $\RR^3$.
$E_k$ consists of eigenvectors of $T_\ell$
with eigenvalue $e^{i(k+\theta) .\ell}$.
Choose the vector $\ell$ so that the
$\ell_j/2\pi$ are rationally independent.  Then the eigenvalues
for distinct $k$ are distinct, so the spectral decomposition of
$T_\ell$ is
$L^2(\T^3_\theta) = \oplus_\perp E_k$.

It follows that $G(E_k)\subset E_k$ for all
$k\in \ZZ^3$.
It suffices to
diagonalize the restriction of $G$ to each $E_k$.
Compute
$$
G\, \bigg(
e^{i\theta .x}e^{i k .x}
\begin{pmatrix}
\bfe
\cr
\bfb
\end{pmatrix}
\bigg)
\ =\
\begin{pmatrix}
 0 & \curl
\cr
-\curl & 0
\end{pmatrix}
\bigg(
e^{i\theta .x }\begin{pmatrix}
e^{ik .x}\bfe
\cr
e^{ik .x}\bfb
\end{pmatrix}
\bigg)
\ =\
e^{i\theta  .x}
e^{i k .x}
\begin{pmatrix}
i(k+\theta)\wedge \bfb
\cr
-i(k+\theta)\wedge \bfe
\end{pmatrix}\,.
$$
To have eigenvalue $\lambda=i\omega$ it is necessary and sufficient
that $\omega$ is an eigenvalue of  $G_0\in {\rm Hom}(\CC^6)$ 
\begin{equation}
\label{eq:evcond}
G_0\, 
\begin{pmatrix}
\bfe
\cr
\bfb
\end{pmatrix}
\ :=\ 
\begin{pmatrix}
(k +\theta) \wedge \bfb
\cr
- (k+\theta)\wedge \bfe
\end{pmatrix} 
\ =\ \omega 
\begin{pmatrix}
\bfe
\cr
\bfb
\end{pmatrix} 
\,,
\end{equation}
One has eigenvalue $0$ if and only if
both $\bfe$ and $\bfb$ are parallel to $k+\theta$
This kernel has dimension
equal to two.

The orthogonal space has dimension $4$ and consists
of vectors with both $\bfe$ and $\bfb$ orthogonal to $k+\theta$.
Since $(k+\theta)\perp \bfb$,
$(k+\theta) \wedge(k+\theta)\wedge \bfb = -|k+\theta |^2 \bfb$.
Using \eqref{eq:evcond} compute
$$
-|k+\theta |^2 \bfb
=
(k+\theta)\wedge(k+\theta)\wedge \bfb
=
(k+\theta)\wedge(\omega\bfe)
=
-\omega^2\bfb\,.
$$
Therefore for $(k+\theta)\ne 0$ there are two roots
$\omega=\pm |k+\theta|$.   Each has a two dimensional
eigenspace generated by taking $\bfe\perp(k+\theta)$
and $\bfb=\mp (k+\theta)\wedge \bfe$.
\end{example}

Return next to the general case.

\begin{proposition}
\label{prop:thetaspec}
There is an infinite dimensional space
of $\theta$-periodic solutions of \eqref{eq:evalue2}
with  eigenvalue
$\lambda=0$ consisting of
$E$ and $B$ with vanishing {\rm curls}.
The  $L^2_{\epsilon_0,\mu_0}(\T^3_\theta)$-orthogonal 
complement to this kernel
 satisfy
 $\div(\epsilon_0 E)=\div
(\mu_0 B) =0$.
There is a constant $C$ independent of $\theta$
so that
$\theta$-periodic $E,B$
satisfy
\begin{equation}
\label{eq:thetacoercivity}
\begin{aligned}
\|E,B&\|_{H^1(\T_\theta^3)} 
\ \le\
C\Big(
\|\curl\,E\|_{L^2(\T^3_\theta)}
\ +\
\|\curl\,B\|_{L^2(\T_\theta^3)}\cr
&
\hskip.5cm
\ +\
\|\div(\epsilon_0(x) E)\|_{L^2(\T_\theta^3)}
\ +\
\|\div\,(\mu_0(x) B)\|_{L^2(\T_\theta^3)}
\ +\
\|E,B\|_{L^2(\T_\theta^3)}
\Big)\,.
\end{aligned}
\end{equation}
For each  $\theta$ the $\lambda=i\omega\ne 0$ for which
\eqref{eq:evalue2} has a nontrivial solution
is a discrete set in  $\RR\setminus \{0\}$.  
Each eigenspace
is a finite dimensional space of smooth functions.
 The value $0$ is not an accumulation point
of nonzero eigenvalues.  The $\omega$ are
not bounded above and are not bounded below.
\end{proposition}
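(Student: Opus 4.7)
The plan is to deduce the first two assertions from direct computation and to establish the spectral claims by applying the spectral theorem for compact antiselfadjoint operators to the restriction of $T:=(A_0^0)^{-1}G$ to the orthogonal complement of its kernel. For $\lambda=0$, equation \eqref{eq:evalue2} reduces to $\curl E=\curl B=0$, and the $\theta$-periodic gradient pairs $(\nabla\phi,0)$ and $(0,\nabla\psi)$ with $\phi,\psi\in H^1(\T^3_\theta)$ furnish an infinite dimensional family of solutions. If $(E,B)$ is $L^2_{\epsilon_0,\mu_0}(\T^3_\theta)$-orthogonal to all such pairs, then $\int\langle\epsilon_0 E,\nabla\phi\rangle\,dx=0$ for every $\theta$-periodic $\phi$; integration by parts together with density of smooth $\theta$-periodic functions in $L^2(\T^3_\theta)$ yields $\div(\epsilon_0 E)=0$, and analogously $\div(\mu_0 B)=0$.

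For the $\theta$-uniform coercivity estimate \eqref{eq:thetacoercivity}, I would apply the unitary isomorphism $u\mapsto e^{-i\theta\cdot x}u$ of Proposition \ref{prop:analyticfamily} to reduce to the fixed space $L^2(\T^3)$, in which $\curl$ becomes $(\partial_x+i\theta)\wedge$ and $\div(\epsilon_0\,\cdot)$ becomes $(\partial_x+i\theta)\cdot(\epsilon_0\,\cdot)$. The principal symbol of the quadratic form $|\div(\epsilon_0 E)|^2+|\curl E|^2$ coincides with that treated in Lemma \ref{lem:coercivity}, so the same G{\aa}rding argument on the compact torus yields \eqref{eq:thetacoercivity}; the extra $i\theta$ contributions are of order zero in the derivatives and, for $\theta\in[0,1]^3$ compact, contribute only bounded quantities absorbable into the $L^2$ remainder.

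For the spectral claims, $T$ is densely defined and antiselfadjoint on $L^2_{\epsilon_0,\mu_0}(\T^3_\theta)$ (using that $G$ is antiselfadjoint on $L^2$ by Proposition \ref{prop:antisa}). Its kernel is $K$, and by the first part $K^\perp$ is contained in the divergence-free subspace, is $T$-invariant, and $T$ is injective there. Since the divergence terms of \eqref{eq:thetacoercivity} vanish on $K^\perp$, coercivity reduces to $\|u\|_{H^1}\le C(\|Tu\|_{L^2}+\|u\|_{L^2})$; a standard Rellich/contradiction argument then upgrades this to $\|u\|_{L^2}\le C\|Tu\|_{L^2}$ on $K^\perp$, and combined with density of the range (antiselfadjointness plus triviality of kernel on $K^\perp$) makes $T^{-1}:K^\perp\to K^\perp$ bounded; it is compact because it factors through the embedding $H^1(\T^3_\theta)\hookrightarrow L^2(\T^3_\theta)$. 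The spectral theorem for compact antiselfadjoint operators then yields a discrete list of eigenvalues $i\omega_n\in i\R\setminus\{0\}$ of finite multiplicity, bounded away from $0$ and accumulating only at $\pm i\infty$; reality of $T$ pairs $i\omega$ with $-i\omega$ via complex conjugation, forcing the $\omega_n$ to be unbounded both above and below. Smoothness of each eigenfunction follows from the higher-order analogue of \eqref{eq:thetacoercivity} (obtained by the same G{\aa}rding argument applied to differentiated equations) and a routine elliptic bootstrap.

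The principal obstacle is the $\theta$-uniformity of \eqref{eq:thetacoercivity} together with careful handling of the splitting $L^2_{\epsilon_0,\mu_0}(\T^3_\theta)=K\oplus K^\perp$: the infinite multiplicity of $K$ prevents $T$ from having compact resolvent on the full space, so the compact-operator machinery must be deployed on $K^\perp$, where coercivity furnishes the genuine $L^2$ control.
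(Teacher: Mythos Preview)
Your overall architecture matches the paper's: establish the coercivity estimate by the G{\aa}rding argument of Lemma~\ref{lem:coercivity} transported to the torus, then split off the infinite-dimensional kernel and use compactness on its orthogonal complement to get discrete spectrum. The paper reaches compactness slightly differently, observing directly that $(I+G|_{\caH_{\rm dyn}})^{-1}$ is compact because the coercivity estimate forces the domain into $H^1$; your route via a Rellich contradiction to bound $T^{-1}$ on $K^\perp$ is a valid alternative and leads to the same conclusion.

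There is, however, a genuine gap in your unboundedness argument. You invoke ``reality of $T$'' and complex conjugation to pair $i\omega$ with $-i\omega$, but complex conjugation does not preserve $\theta$-periodicity: if $u\in L^2(\T^3_\theta)$ then $\bar u\in L^2(\T^3_{-\theta})$, so for generic $\theta$ this produces an eigenfunction for a \emph{different} Bloch fiber, not a symmetry of the spectrum at fixed $\theta$. The fix is easy: the involution $(E,B)\mapsto(E,-B)$ does preserve $\theta$-periodicity and anticommutes with $G$, hence sends eigenvalue $i\omega$ to $-i\omega$ within $L^2(\T^3_\theta)$. Combined with the fact that $K^\perp$ is infinite-dimensional (so the compact antiselfadjoint $T^{-1}$ has infinitely many eigenvalues with $|\omega_n|\to\infty$), this symmetry gives unboundedness in both directions. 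The paper instead argues via quadratic-form comparison: writing the eigenvalue problem as $P^{-1/2}(G/i)P^{-1/2}v=\omega v$ with $P=A_0^0$, a one-sided bound on the $\omega$'s is equivalent to a one-sided bound for the case $\epsilon_0=\mu_0=I$, which Example~\ref{important-example} explicitly rules out. Your symmetry route is shorter once corrected; the paper's form-comparison is more robust in situations where such a symmetry is unavailable.
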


\begin{proof}  The estimate is the key.  Write $u=e^{i\theta x}\widetilde u$
with periodic $\widetilde u$.  Expand the latter in a Fourier series.
The proof of
the Lemma \ref{lem:coercivity}
yields \eqref{eq:thetacoercivity}.

Proposition \ref{prop:stationary2} implies that
the stationary solutions are curl free and their orthogonal complement
is invariant under the flow by Maxwell's equations.
Also that the  complement 
consists of solutions satisfying
$\div(\epsilon_0(x) E)=\div
(\mu_0(x) B) =0$.

Decompose $L^2_{\eps_0,\mu_0}(\TT^3_\theta)$ 
as a Maxwell invariant direct sum of stationary and dynamic
states
$$
L^2_{\eps_0,\mu_0}(\TT^3_\theta)
\ =\ 
{\rm Ker}\, G
\ \oplus_\perp\, 
\caH_{\rm dyn}\,,
\qquad
\caH_{\rm dyn}
\ :=\
\big\{
E,B\,:\,\div(\eps_0(x)E)\ =\ \div\mu_0(x)B\ =\ 0\big\}
.
$$ 
The 
$L^2_{\eps_0,\mu_0}(\TT^3_\theta)$ 
unitary group $e^{tG}$ 
restricts to a unitary  group
on $\caH_{\rm dyn}$ whose anti selfadjoint
generator is the  restriction $G|_{\caH_{\rm dyn}}$.

Estimate \eqref{eq:thetacoercivity} implies that 
$(I + G|_{\caH_{\rm dyn}})^{-1}$ is  compact,
hence   has pure point spectrum tending to zero
and total multiplicity  in $\{|z|\ge \delta>0\}$ finite
for all $\delta > 0$.
Therefore the spectrum of $G|_{\caH_{\rm dyn}}$
is
pure point and the total multiplicity in any bounded
set is finite.

Commuting with derivatives yields an inductive proof of
an $H^s$ version of \eqref{eq:thetacoercivity} for $1\le s\in \NN$,
\begin{equation}
\label{eq:scoercivity}
\begin{aligned}
\|E,B&\|_{H^{s+1}(\T_\theta^3)} 
\ \le\
C(s)\Big(
\|\curl\,E\|_{H^s(\T^3_\theta)}
\ +\
\|\curl\,B\|_{H^s(\T_\theta^3)}\cr
&\ +\
\|\div(\epsilon_0(x) E)\|_{H^s(\T_\theta^3)}
\ +\
\|\div\,(\mu_0(x) B)\|_{H^s(\T_\theta^3)}
\ +\
\|E,B\|_{H^s(\T_\theta^3)}
\Big)\,.
\end{aligned}
\end{equation}
The smoothness
of eigenfunctions for eigenvalues $\lambda\ne 0$
follows.

It remains to show that the spectrum is unbounded
above and unbounded below.
Define $P$ to be the bounded
strictly positive selfadjoint multiplication
operator $P(E,B):=(\eps\, E, \mu\, B)$.  The eigenvalue equation is
$Gu=i\omega Pu$.  It holds if and only if $v=P^{1/2}u$ satisfies
$P^{-1/2}(G/i)P^{-1/2} v= \omega v$.  Need to  show that
 the eigenvalues are not bounded below and not bounded above.
The $\omega$
are bounded below (resp. above)  by $C\in \RR$  if and only if for all
   $v$ so that $P^{-1/2}v$ belongs to the domain of $G$,
   $$
   \langle
   P^{-1/2}(G/i)P^{-1/2} v\,,\, v
   \rangle
   \ \ge \
   C\big \langle v\,,\,v\big \rangle\,,
   \qquad
   ({\rm resp.} \ \le\ )\,.
   $$
   This holds if and only if for all $u$ belonging to the domain
   of $G$,
   $$
    \langle
   (G/i) u\,,\, u
    \rangle
   \ \ge \
   C\big \langle P^{1/2}u\,,\,
   P^{1/2}u
   \big \rangle\,,
    \qquad
   ({\rm resp.} \ \le\ )\,.
   $$
This holds if and only if the spectral problem with $\eps_0=\mu_0=I$ has
eigenvalues bounded below (resp. above).  Example \eqref{important-example} 
shows  by explicit computation
that for $\eps_0=\mu_0=I$
the spectrum is unbounded in both directions.  Thus the $\omega$
are not bounded below and not bounded above.
\end{proof}

Proposition \ref{prop:thetaspec}
implies that  the discrete spectrum at fixed $\theta$ consists of 
$\{0\}$ with infinite multiplicity and a discrete set of possibly multiple
eigenvalues $\lambda=i\omega$ that we label according to their distance
from the origin as in \eqref{eq:numbering}.
Away from eigenvalue crossings,
the functions $\theta\mapsto \omega_j(\theta)$
are real analytic.  
Rellich's theorem shows that 
away from the 
crossings the associated spectral projections $\Pi_j(\theta)$
are also real analytic in the sense that the unitary
map of Proposition \ref{prop:analyticfamily} intertwines
them with an analytic family acting on $L^2_{\epsilon,\mu}(\T^3)$ (see \cite{kato}).

For $\theta$ fixed and an eigenvalue $\omega(\theta)\ne 0$ there is a 
finite dimensional space of eigenfunctions $e^{i\theta .x} (E_\theta(x) , B_\theta(x))\in L^2(\T^3_\theta)$ 
and corresponding {\bf Bloch plane wave} solutions of \eqref{eq:Maxwell}
$$
e^{i\omega(\theta)t}\
e^{i\theta .x} \
(E_\theta(x) , B_\theta(x))\,.
$$

We assume the constant  multiplicity Hypothesis
\ref{hyp:constantmultiplicity}.

From the analytic dependence of the operators it follows that the 
$L^2(\T^3_\theta)$ orthogonal projection, $\Pi(\theta)\in {\rm Hom}\,(L^2(\T^3_\theta))$
onto the 
nullspace of 
$i\omega(\theta)A_0^0(y)-\sum_jA_j\partial_j$
 is analytic and in particular of constant rank.

\section{The purely periodic case}
Fix $\utheta$ and a locally constant multiplicity
eigenvalue $\omega(\theta)\ne 0$.  Denote by
$\KK(\theta)$ the kernel of $\LL(\omega(\theta),\theta,y,\partial_y) $
from Definitions \ref{def:L-P}.
If $\theta\mapsto e^{i\theta .x} \psi(x,\theta)$ is
a smooth function of $\theta$ on a neighborhood
of $\utheta$ with values in $\KK(\theta)$
then $\psi$ is periodic with period $2\pi$ in $x$
and smooth in its dependence on $x,\theta$ for
$\theta\approx \utheta$.
The function
$$
e^{i\omega(\theta)t}\ e^{i\theta .x}\ \psi(x,\theta)
$$
is a $\theta$-periodic solution
of Maxwell's equation
in the periodic medium $\epsilon(x),\mu(x)$.

Scaling the periodic structure to 
$\epsilon(x/h),\mu(x/h)$ yields the corresponding
rapidly oscillatory Bloch plane waves
$$
e^{ i\omega(\theta)t/h } \ e^{ i\theta .x/h } \ 
\psi(x/h\,,\,\theta)\,.
$$
For 
$a\in C^\infty_0(\R^3)$
and 
$h<<1$ the function $a((\theta-\utheta)/h)$
is supported in the domain of definition of $\omega(\theta)$.
Superposing nearby waves yields a  Bloch wave packet for $h<<1$
$$
\int_{[0,1[^3} a\Big( \frac{ \theta-\utheta }{ h } \Big)\
e^{i\omega(\theta)t/h} \ e^{i\theta .x/h}\ 
\psi(x/h\,,\,\theta)\ d\theta,
\qquad
a\in C^\infty_0(\R^3)\,.
$$
Letting $\zeta:=(\theta-\utheta)/h$ 
yields the
exact solutions
\begin{equation}
\label{2.2.2}
u^h\ =\ 
e^{ i \utheta .x/h}\
\int_{\RR^3}
\psi\Big(
\frac{x}{h}
\,,\,
\utheta + h\zeta
\Big)
\ 
e^{ i t \omega(\utheta +h\zeta)/h}
\
e^{ ix.\zeta}
\ 
a(\zeta)
\ d\zeta\,.
\end{equation}

\subsection{The geometric optics time scale $\bf t\sim 1$.}  
\label{sec:ppgop}

\begin{definition}
\label{def:DD}
Complementing Definition \ref{def:groupvelocity}
the corresponding transport operator
is defined by
$$
\DD(\partial_t,\partial_x)\ :=\ 
\partial_t \ +\  \V.\partial_x\,.
$$
The symbol is $\DD(\tau,k)=\tau+\V .k$.

\end{definition}

The Taylor series in $h$ of infinite and finite orders
respectively are,
\begin{equation}
\label{psiexp}
\psi(y, \utheta+h\zeta)
\ \sim\ 
\psi(y,\utheta) + \sum_{j\geq 1} h^j\ g_j(y,\zeta)\,,
\quad
\omega(\utheta+h \zeta)
\ =\ 
\omega(\utheta) -\V h \zeta + h^2k(h,\zeta)\,,
\end{equation}
where the sum on the right hand  side of $\sim$
is an asymptotic expansion as $h\to 0$, not a convergent
series.
Then,
\begin{equation}
\label{exponential}
\begin{aligned}
e^{ it\omega(\utheta+\e\zeta)/h}
&\ =\ 
e^{ it\omega(\utheta)/h}\
e^{- i t\V.\zeta}\
e^{ i (h t) k(h, \zeta)}\cr
&\ =\ 
e^{ it\omega(\utheta)/h}\
e^{- i t\V.\zeta}\
\Big(1 + \sum_{j\geq 1} (h t)^j k_j(h, \zeta)\Big)\,,
\end{aligned}
\end{equation}
where the last line uses a Taylor expansion of 
$s\mapsto e^{ i s k(h,\zeta)}$ about $s=0$.
Define
$$
v(x) \ :=\ \int 
\ 
e^{ ix.\zeta}
\ 
a(\zeta)
\ d\zeta\,.
$$
Use \eqref{psiexp} and \eqref{exponential}
in \eqref{2.2.2}  to find
\begin{equation}
\label{ppgop}
u^h
\ \sim\
e^{ i S/h}\
w(h,t,x,x/h)\,,
\qquad
S \ :=\ \omega(\utheta)t + x.\utheta\,,
\end{equation}
and
$$
w(h,t,x,y)
\ \sim\
w_0(t,x, y) + h w_1(t,x,y) + \cdots
$$
in the sense of Taylor series about $h=0$.  The 
leading term is
$$
w_0(t,x,y) \ =\ v(x-\V t)\, \psi(y,\utheta)\,.
$$
The velocity is $\V=-\nabla_\theta\omega(\utheta)$. It is not at all obvious from this definition
that $\V$ does not exceed the speed of light. 
As our media are anisotropic, the speed may depend
on direction.  We recall the algorithm giving such
anisotropic speeds (see \cite{leray}, 
\cite{jmrhj}, \cite{rauch}).  

Denote by $(\tau,\xi)$ the dual variable of $(t,y)$.
The characteristic 
polynomial of Maxwell's equations is 
$$
p(y,\tau,\xi) \ :=\ \det \left( \tau 
\begin{pmatrix}
\epsilon_0(y) & 0 \cr 
0 & \mu_0(y)
\end{pmatrix} + 
\begin{pmatrix}
0 & - \xi \wedge \cr 
\xi \wedge & 0
\end{pmatrix}
\right)
\,.
$$
Its roots $\tau(y,\xi)$ for $\xi$ real
define the characteristic variety.
Define extreme roots 
$\tau_{\rm max}(y,\xi)$ 
by
$$
 \tau_{\rm max}(y,\xi)
  \  :=\
 \max\big\{
 \tau\,:\, p(y,\tau,\xi)=0\big\}\,.
 $$
 with a similar defintion for $\tau_{\rm min}(y,\xi)$.
 The function $\tau_{\rm max}$ is positive homogeneous
 of degree one in $\xi$, and,
 \begin{equation}
 \label{eq:supportfunction}
 \tau_{\rm max}(y,-\xi) \ =\ 
 -\,
 \tau_{\rm min}(y, \xi)\,.
 \end{equation}
 The set of velocities that do not exceed the speed
 of propagation is a convex set of vectors $\bfv$
 whose support function equal 
 $\tau_{\rm max}(y,-\xi)$, equivalently
 $$
\cap_
{  \xi\in \RR^3\setminus \{0\} }
\
 \big\{ \bfv\,:\,
\xi.\bfv 
\ \le\
\tau_{\rm max}(y,-\xi)
\big\}\,.
$$

In case of constant and isotropic permittivities 
$\tau_{\rm max}(y , -\xi ) =|\xi| /\sqrt{\epsilon\mu}$
yielding the classic formula for the speed of light.

Define
$$
\tau_{\rm max}(\xi)
\ :=\
\max_{y\in \TT^3}\
\tau_{\rm max}(y,\xi)
\,.
$$
Then $\tau_{\rm max}(-\xi)$ is the largest speed limit
for 
$\xi.\bfv$.

\begin{theorem}
\label{thm.speed}
The  group velocity $\V$
from
Definition \ref{def:groupvelocity}
respects the maximum speed of propagation 
for each $\xi\ne 0$, precisely for all
$0\ne \xi\in \RR^3$, 
$  \xi .\V \le \tau_{\rm max}(-\xi)$.
\end{theorem}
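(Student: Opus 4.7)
The plan is to argue by contradiction, using finite speed of propagation for the symmetric hyperbolic Maxwell system \eqref{eq:Maxwell} against the Bloch wave packet geometric optics approximation developed in this subsection. Suppose some $\xi_0\ne 0$ satisfies $\sigma := \xi_0 \cdot \V > \tau := \tau_{\max}(-\xi_0)$. Fix a nonzero $v\in C^\infty_c(\RR^3)$ with ${\rm supp}\, v \subset \{x : x \cdot \xi_0 \le 0\}$ and a smooth nonzero $\psi(\cdot,\utheta) \in \KK = \ker\LL(\omega(\utheta),\utheta,y,\partial_y)$. Let $u^h$ be the exact Maxwell solution with initial data
$$u_0^h(x) \ :=\ e^{i\utheta\cdot x/h}\, v(x)\, \psi(x/h, \utheta),$$
compactly supported in $\{x\cdot\xi_0 \le 0\}$.

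Two properties of $u^h$ will be set in conflict. First, Leray--Friedrichs finite speed of propagation for \eqref{eq:Maxwell}, with maximum speed in direction $\xi_0$ equal to $\sup_y\tau_{\max}(y,-\xi_0) = \tau$, gives
$${\rm supp}\, u^h(t,\cdot)\ \subset\ \{x\cdot\xi_0 \le t\tau\}, \qquad t\ge 0.$$
Second, a two-scale WKB construction (leading profile $w_0 = v(x-\V t)\psi(y,\utheta)$, one corrector $w_1$ solving $\LL w_1 = -\widetilde D w_0$ with $\widetilde D := A_0\partial_t + A_j\partial_{x_j}$; the solvability condition $\Pi\widetilde D w_0 = 0$ is automatic because differentiating $\LL(\omega(\theta),\theta,\cdot)\psi(\cdot,\theta) = 0$ in $\theta$ at $\utheta$ yields $\Pi[(A_j - \V_j A_0)\psi] = 0$) produces an approximate solution
$$\uv^h(t,x) \ :=\ e^{iS/h}\bigl(w_0(t,x,x/h) + h\, w_1(t,x,x/h)\bigr), \qquad S = \omega(\utheta)t + \utheta\cdot x,$$
supported in $\V t + {\rm supp}\, v$, with $P^h\uv^h = O(h)$ in $L^2$. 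The stability estimate of Theorem~\ref{thm:stability1} on order-one times (where $C(m,h) = O(1)$) then yields $\|u^h(t) - \uv^h(t)\|_{L^2(\RR^3)} = O(h)$ uniformly on $[0,T]$ for any fixed $T$.

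Choose $T$ with $T(\sigma - \tau) > {\rm diam}({\rm supp}\, v)$; then $\Omega := \{x\cdot\xi_0 > T\tau\} \cap {\rm int}(\V T + {\rm supp}\, v)$ is nonempty and open. Standard averaging of the rapidly oscillating periodic factor $|\psi(\cdot/h,\utheta)|^2$ on $\Omega$ gives
$$\lim_{h\to 0}\int_\Omega |v(x - \V T)|^2\, |\psi(x/h,\utheta)|^2\, dx \ =\ \biggl(\frac{1}{(2\pi)^3}\int_{\TT^3}|\psi(y,\utheta)|^2\, dy\biggr)\int_\Omega |v(x - \V T)|^2\, dx \ >\ 0,$$
so $\|\uv^h(T,\cdot)\|_{L^2(\Omega)} \ge c_0 > 0$ for all small $h$. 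But $u^h(T,\cdot) \equiv 0$ on $\Omega$ by the first property, forcing $c_0 \le \|\uv^h(T) - u^h(T)\|_{L^2} = O(h)$, a contradiction.

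The main obstacle is the rigorous execution of the WKB step --- constructing $w_1$ and verifying $P^h\uv^h = O(h)$ in $L^2$. This is the standard geometric-optics-scale argument for symmetric hyperbolic systems with rapidly oscillating coefficients; all prerequisites (divergence/curl control from Section~\ref{sec:coercivity} and $L^2$ stability from Theorem~\ref{thm:stability1} on the short scale $t = O(1)$ with $C(m,h) = O(1)$) are already in place.
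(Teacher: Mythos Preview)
Your argument is correct and gives a genuinely different proof from the paper's.  The paper proceeds by a short, direct algebraic computation: it differentiates the Bloch eigenvalue identity $\LL(\omega(\theta),\theta,y,\partial_y)\Pi(\theta)=0$ in the direction $\xi$, sandwiches with $\Pi$, and then uses the min--max characterization of the eigenvalues of the symbol $\begin{pmatrix}0&-\xi\wedge\\ \xi\wedge&0\end{pmatrix}$ relative to the positive matrix $A_0^0(y)$ to bound the resulting quadratic form pointwise in $y$ by $-\tau_{\min}(\xi)=\tau_{\max}(-\xi)$.  Integrating over $\TT^3$ against a normalized eigenfunction yields $\xi\cdot\nabla_\theta\omega\ge\tau_{\min}(\xi)$, i.e.\ $\xi\cdot\V\le\tau_{\max}(-\xi)$, with no appeal to finite speed of propagation, WKB constructions, or the stability theorem.

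Your route is by contradiction: you pit the sharp Leray half-space domain-of-dependence bound for the symmetric hyperbolic Maxwell system against the $t=\caO(1)$ geometric-optics approximation (essentially the content of Theorem~\ref{error-est1} in the purely periodic special case $A_0^1=0$, $M=0$).  This is conceptually appealing---it makes transparent that $\V$ is the transport velocity of genuine wave packets and hence cannot exceed signal speed---but it leans on machinery developed \emph{after} Theorem~\ref{thm.speed} in the paper (the corrector construction and the error estimate of \S\ref{sec:gopBloch}), whereas the paper's argument is self-contained at the level of the spectral problem.  A small cosmetic point: your condition ``$T(\sigma-\tau)>\operatorname{diam}(\operatorname{supp} v)$'' should really bound the width of $\operatorname{supp} v$ in the $\xi_0$-direction (equivalently, normalize $|\xi_0|=1$ or insert a factor $|\xi_0|$); with that adjustment the set $\Omega$ is indeed nonempty and carries positive $L^2$ mass of the leading profile.
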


\begin{proof}  
The Bloch spectral eigenvalue problem for periodic as opposed
to $\theta$-periodic functions is
\begin{equation}
\label{def.bscp}
i \, \omega(\theta) 
\begin{pmatrix}
\epsilon_0(y)&0 \cr
0& \mu_0(y)
\end{pmatrix} \Pi(\theta)
+
\left(
\begin{array}{cc}
0 & -(i\theta + \partial_y )\wedge \\
(i\theta + \partial_y )\wedge & 0
\end{array}
\right)
\Pi(\theta) = 0 
\end{equation}
where $\Pi(\theta)$ is the projector (of constant rank) 
on the eigensubspace of the eigenvalue $\omega(\theta)$. 
The constant multiplicity hypothesis implies that
the eigenvalue 
and the spectral projector are analytic in a vicinity of 
$\underline\theta$. 
Differentiating (\ref{def.bscp}) with respect to $\theta$ 
in the direction of a covector $\xi$ and multiplying on the left by 
$\Pi(\theta)$ yields
\begin{equation}
\label{def.bscp2}
\, \xi.\nabla_\theta\omega(\theta)
\ \Pi(\theta)
\begin{pmatrix}
\epsilon_0(y)&0 \cr
0& \mu_0(y)
\end{pmatrix} \Pi(\theta)
+ \Pi(\theta)
\left(
\begin{array}{cc}
0 & - \xi \wedge \\
 \xi\wedge & 0
\end{array}
\right)
\Pi(\theta) = 0\, .
\end{equation}
Take any eigenfunction $E(y) , B(y)$, normalized by 
\begin{equation}
\label{eq:normalization}
\int_{\TT^3}
 \langle
\begin{pmatrix}
\epsilon_0(y)&0 \cr
0& \mu_0(y)
\end{pmatrix}
\begin{pmatrix}
E(y) \cr 
B(y)
\end{pmatrix} 
,
\begin{pmatrix}
E(y) \cr 
B(y)
\end{pmatrix}
 \rangle
\
dy = 1 \,.
\end{equation}
The quadratic 
form associated to (\ref{def.bscp2}) yields
\begin{equation}
\label{eq:xiomega}
\xi.\nabla_\theta\omega(\theta)  \ +\
 \int_{\TT^3}
 \langle 
\left(
\begin{array}{cc}
0 & -\xi \wedge \\
\xi\wedge & 0
\end{array}
\right)
\begin{pmatrix}
E(y) \cr 
B(y)
\end{pmatrix} 
,
\begin{pmatrix}
E(y) \cr 
B(y)
\end{pmatrix}
 \rangle
\ dy
\ =\ 
0\, .
\end{equation}
The $-\tau(y,\xi)$ are the
eigenvalues with respect to the postitive
definite matrix ${\rm diag}\,\{\eps_0(y)\,,\,\mu_0(y)\}$ so the 
min-max characterization implies that for each $y$
$$
\begin{aligned}
\langle
\left(
\begin{array}{cc}
0 & -\xi \wedge
 \\
\xi\wedge & 0
\end{array}
\right)
\begin{pmatrix}
E(y) \cr 
B(y)
\end{pmatrix} 
,
\begin{pmatrix}
E(y) \cr 
B(y)
\end{pmatrix}
\rangle
\ &\le\
-\,\tau_{\rm min}  (y,\xi)\,
\langle
\begin{pmatrix}
\epsilon_0(y)&0 \cr
0& \mu_0(y)
\end{pmatrix}
\begin{pmatrix}
E(y) \cr 
B(y)
\end{pmatrix} 
,
\begin{pmatrix}
E(y) \cr 
B(y)
\end{pmatrix}
\rangle
\cr
\ &\le\
-\,\tau_{\rm min}  (\xi)\,
\langle
\begin{pmatrix}
\epsilon_0(y)&0 \cr
0& \mu_0(y)
\end{pmatrix}
\begin{pmatrix}
E(y) \cr 
B(y)
\end{pmatrix} 
,
\begin{pmatrix}
E(y) \cr 
B(y)
\end{pmatrix} 
\rangle
\,.
\end{aligned}
$$
Integrating and using
\eqref{eq:normalization} and
\eqref{eq:xiomega} 
proves 
$$
\xi.\nabla_\theta \omega(\theta)
\ -\
 \tau_{\rm min}(\xi)
 \  \ge\
  0
\,,
$$
which is the desired relation since
$\V = -\nabla_\theta\omega(\theta)$.
\end{proof}

An analogous result to Theorem \ref{thm.speed} is proved in \cite{apr3}, where a bound on the 
group velocity for scalar wave equations is given.

\subsection{The diffractive time scale $\bf t\sim 1/h$.}
\label{sec:ppdiffrgop}

In (\ref{exponential}), the expansion
parameter is $h  t$ so when $h t$ is not small,
 the approximation is not appropriate.  For the diffractive
scale 
$h t\sim 1$, one needs a refinement.
Take the next term in the Taylor expansion in
the exponent.
Denote by $q$ the symmetric quadratic 
expression
\begin{equation}
\label{2.2.4}
q(\zeta,\zeta)  \ := \ \sum_{i,j=1}^3\,
\frac{\partial^2 \omega(\utheta)}{\partial \theta_i\partial \theta_j}
\ \zeta_i\,\zeta_j\,.
\end{equation}
Then,
$$
\omega(\utheta + \e\zeta)
\ =\ 
\omega(\utheta)
\ -\ 
h \V .\zeta
\ +\
h^2 q(\zeta,\zeta)/2
\ +\ 
h^3 \sum_{j\geq0} h^j \beta_j(\zeta)\,,
$$
and,
$$
e^{ i\omega(\utheta+h\zeta)t/h}
\ =\ 
e^{ i\omega(\utheta)t/h}\
e^{-  i t\V.\zeta}\
e^{ i h  tq(\zeta,\zeta)/2}\
e^{ i h (h t) \sum_{j\geq0} h^j \beta_j(\zeta)}\,.
$$
Introduce the slow time $\caT =h t$.
The 
exact solution has the form
$$
e^{2\pi iS/h}\ \widetilde W(h, h t, x-\V t,x/h)\,,
\qquad
S = \omega(\utheta) t + \utheta.x\,,
$$
$$
\widetilde W(h , \caT, x, y) 
\ :=\ 
\int
\psi(y,\utheta+h \zeta)\  
e^{i \caT q(\zeta,\zeta)/2}\ 
e^{ i h \caT \sum_{j\geq0} h^j \beta_j(\zeta)}\ 
e^{ i x.\zeta}\
a(\zeta)\
d\zeta\,.
$$
Taylor expansion in $h$ yields
\begin{equation}
\label{exponexp}
e^{ i h \caT \sum_{j\geq0} h^j \beta_j(\zeta)}
\ =\ 
\Big( 1 + \sum_{j\geq 1} h^j\,r_j(\caT,\zeta) \Big)\,.
\end{equation}
Using  \eqref{psiexp} and \eqref{exponexp}
in the definition of $\widetilde W$ shows that 
\begin{equation}
\label{wsim}
\widetilde W(h,\caT,x,y)
\ \sim\ 
\sum_{j\geq 0} \ h^j\, \widetilde w_j(\caT,x,y)\,,
\end{equation}
with
\begin{equation}
\label{firstterm}
\widetilde w_0(\caT,x,y)
\ =\ 
\psi(y,\utheta)\ 
\int
   e^{ i \caT\,q(\zeta,\zeta)/2}
\ 
e^{ ix.\zeta}\
a(\zeta)
\ d\zeta
\,.
\end{equation}
This shows that the solution has an asymptotic expansion of the form
$$
e^{ iS/h}\ \widetilde W(h ,h t, x-\VV t, x/h)\,,
$$
with $\widetilde W$ satisfying \eqref{wsim}.

Equation 
\eqref{firstterm} implies that 
$\widetilde w_0$ is a tempered solutions (with values in $\KK$) 
of the Schr\"odinger equation
\begin{equation}
\label{ppschrod}
 i\, \partial_\caT\widetilde w_0 \ -\  
 \frac{1}{2}\,
  \partial_\theta^2\omega(\utheta)
\big( \partial_x,\partial_x \big)\,
 \widetilde w_0
\ =\ 
0\,.
\end{equation}
Though the function  $\widetilde w_0$
takes values in the finite dimensional
space $\KK$ the equation \eqref{ppschrod}
is scalar.   The constant rank hypothesis
is crucial here.

\section{Bloch wave packets on a modulated background and $t=\Op(1)$}
\label{sec:gopBloch}

This section considers solutions of the Maxwell's equations
for times $t=\Op(1)$.  
This time scale
is an essential first
step in treating the diffractive case.  
In order that the asymptotic description be nontrivial 
we
allow lower order terms in the equations.
In particular this 
includes  the case
of a possibly conducting medium with Ohm's law dissipation,
\begin{equation}
\label{eq:ohm}
\epsilon (x,x/h)\, \partial_t E \ =\ \curl B \ -\ 
\sigma(t,x,x/h)E,
\qquad
\mu(x,x/h)\,\partial_t B  \ = \ - \curl E\,.
\end{equation}
Here $\sigma(t,x,y)$ is a nonnegative symmetric
matrix valued function.  The physics modelled is that
where $\sigma\ne 0$ the medium reacts instantaneously
to the field
$E$ by generating a current $J=\sigma\,E$.
Such an assumption is realistic only when the field
$E$ varies little on the time scales associated to the 
motion of electrons.
The associated energy dissipation law is 
$$
\partial_t \int_{\RR^3}
 \langle
\epsilon E\,,\, E
\rangle
\ +\ 
\langle
\mu B\,,\, B
\rangle
\ dx
\ =\ 
-
\int_{\RR^3}
\big\langle
\sigma E\,,\, E
\big\rangle
\ dx
\ \le \ 0\,.
$$

The lower order tem is $\caO (1)$ and appreciably affects
the fields for times $t=\caO (1)$.  
In this section only, we replace 
Hypothesis \ref{hyp:diffractive} by the following that
allows \eqref{eq:ohm}.  The perturbations are larger by
a factor $h^{-1}$ than in the diffractive case.
\begin{hypothesis}
\label{hyp:gop}
{\bf $T=\Op(1)$ hypothesis.}
\begin{equation}
0\ =\ P^h(t,x,\partial_t,\partial_x) u^h \ :=\ 
\partial_t (A_0^h u^h )\ +\
 \sum_{j=1}^3 A_j \partial_{x_j} u^h 
 \ +\ 
 M^h\, u^h\,.
\end{equation}
The coefficients $A_j$, for $j=1,2,3$, are the constant $6\times 6$ matrices
\begin{equation}
\label{eq:AintermsofJ}
A_1:= \left(
\begin{array}{rr}
0      & J_1  \\
-J_1 & 0
\end{array}
\right)\,,\quad
A_2:= \left(
\begin{array}{rr}
0      & J_2  \\
-J_2 & 0
\end{array}
\right)\,,\quad
A_3:= \left(
\begin{array}{rr}
0      & J_3  \\
-J_3 & 0
\end{array}
\right)\,,
\end{equation}
\begin{equation}
\label{eq:defJ}
J_1:= \left(
\begin{array}{rrr}
0 & 0 & 0 \\
0 & 0 & 1 \\
0 & -1 & 0 
\end{array}
\right)\,, \quad
J_2:= \left(
\begin{array}{rrr}
0 & 0 & -1 \\
0 & 0 & 0 \\
1 & 0 & 0 
\end{array}
\right)\,,\quad
J_3:= \left(
\begin{array}{rrr}
0 & 1 & 0 \\
-1 & 0 & 0 \\
0 & 0 & 0 
\end{array}
\right)\,.
\end{equation}
The coefficient $A_0^h$ and $M^h$ are of the form
\begin{equation}
\label{eq:AzeroandM}
A_0^h(t,x) \ =\
 A_0^0 (x/h) + h A_0^1(t,x,x/h)\,,
 \quad
 M^h=M(t,x,x/h)\,,
\end{equation}
where $A_0^0$ and $A_0^1$ 
satisfy Hypothesis \ref{hyp:diffractive}.
\end{hypothesis}
 
\begin{remark}
\label{rmk:gopgrowth}
It follows that the growth rate $C(m,h)$ from Theorem \ref{thm:stability1} 
(and Remark \ref{rem:diffrsize}) is of order $\caO (1)$ as $h\to 0$.
\end{remark}

Motivated by the special case of purely periodic media
in Section \ref{ppgop}
and the linear case of Lax \cite{lax} (see also
\cite{rauch})
 try 
the {\it ansatz} of two scale WKB type,
\begin{equation}\label{eq:approx-sol}
v^h(t,x)\  :=\  
e^{iS(t,x)/{h}} 
W\Big(h,t,x,\frac{x}{h}\Big)
,
\qquad
W\big(h,t,x,y\big)
\ =\ 
w_0(t,x,y)
+
 h  w_1(t,x,y) \,,
\end{equation}
where the $w_j(t,x,y)$ are periodic
functions of $y$ with period $2\pi$.
The case when $S$ is a linear function of $(t ,x)$ is
our principal interest 
$$
S(t,x)
\ =\
 \omega\,t + \theta\, .x\,,
 \qquad
 (\omega,\theta)\in \RR^{1+3}\setminus 
 \{0\} \,.
$$
For this phase  the rays will be  parallel straight lines and one finds
Schr\"odinger type equations at the diffractive scale  $t=\Op(1/h)$.

Three identities are at the heart of checking the accuracy of the 
{\it ansatz}
\begin{equation}
\label{eq:derivativeformulas}
\begin{aligned}
\partial_t \Big[ A_0^h \, e^{i S(t,x)/h} \, W \big(h,t,x,y \big) \big] 
\ = \  &
e^{iS(t,x)/h} 
\Big(\frac{i\omega}{h}+ \partial_t \Big)\Big[A_0^h \, W\big(h,t,x,y\big)\Big] \,,
\\
\curl_x \Big[e^{i S(t,x)/h}\, W \big(h,t,x,y \big)\Big] 
\ =\ &
e^{iS(t,x)/h}\, \Big(\,\partial_x + \frac{i\theta}{h}\, \Big)\wedge 
W \big(h,t,x,y \big)\,, 
\\
\frac{1}{h}\curl_y \Big[e^{iS(t,x)/h}\, W \big(h,t,x,y\big)\Big] 
\ = \  &
e^{iS(t,x)/h} \,\frac{1}{h} \,\partial_y \wedge W \big(h,t,x,y \big)\,.
\end{aligned}
\end{equation}
These yield
\begin{equation}
\label{eq:ansatz}
 P^h(t,x,\partial_t,\partial_x) v^h \  =\ 
e^{iS(t,x)/h} \,
Z^h(t,x,x/h)\,,
\qquad
{\rm with }
\end{equation}
\begin{equation*}
 Z^h:= 
\left[
\Big(\frac{i\omega}{h}+ \partial_t \Big)A_0^h  
-\left(
\begin{array}{cc}
0  & \big(\,\partial_x + \frac{i\theta}{h}\, \big)\wedge \\
-\Big(\,\partial_x + \frac{i\theta}{h}\, \Big)\wedge  & 0    
\end{array}
\right)
-
\left(
\begin{array}{cc}
0  & \frac{1}{h} \,\partial_y \wedge \\
-\frac{1}{h} \,\partial_y \wedge  & 0    
\end{array}
\right)+M
\right]
W .
\end{equation*}
Then 
\begin{equation}\label{residual}
Z^h(t,x,y) \ =\
 h^{-1} \, r_{-1} \, + \,  r_0 \, + \, h \, r_1\ +\ h^2 r_2 \,,
\qquad
r_j
\ =\
r_j(t,x,y)\,.
\end{equation}
Since one substitutes $y=x/h$,
 it would suffice to satisfy $r_j=0$ on the subspace of
 $(t,x,y)$
with $x$ parallel to $y$.  We
 achieve the more ambitious goal of 
 choosing the $w_j$ so that
$r_{-1}=r_0 =0$ 
everywhere.

\subsection{The leading order term.}
The leading two orders in $Z^h$ are 
$$
h^{-1}r_{-1} \ +\ r_0\ =\ 
h^{-1}\ \LL(\omega,\theta, y,\partial_y) W
\ +\
\Big(\MM(\omega,\theta,y,\partial_t,\partial_x,\partial_y)
\ + \ 
i \omega A_0^1+\ M\Big)W\,,
 $$
where $\LL$ is from \eqref{defL} and 
\begin{equation}
\label{eq:firstM}
\MM(y,\partial_t,\partial_x)\, :=\, 
A_0^0 (y)\partial_t \,
- \left(
\begin{array}{cc}
0 & \partial_x \wedge \\
- \partial_x \wedge & 0
\end{array}
\right) \,.
\end{equation}
The highest order term in \eqref{residual} is 
\begin{equation}
\label{hj0}
r_{-1}
\ =\ 
\LL(\omega,\theta, y,
\partial_y)
\,w_0 \,.
\end{equation}
In order that $r_{-1}=0$ have 
nontrivial solutions,
it is necessary  that
\begin{equation*}
\label{kernel}
{\rm ker}\,\LL(\omega,\theta, y,
\partial_y)\ \neq \ \{0\}\,.
\end{equation*}
According to the Floquet-Bloch theory of 
Section \ref{sec:Bloch-theory}, 
$\LL(\omega,\theta, y, \partial_y)$ has a nontrivial kernel 
on periodic functions if and only $i\omega = i\omega(\theta)$ 
is an eigenvalue of \eqref{eq:evalue}.
From now on we make the choice of $\utheta$ and $\omega(\utheta)$ 
so that the constant multiplicity hypothesis 
\ref{hyp:constantmultiplicity} is satisfied.

\begin{definition}
\label{def:Q}
In addition to Definition  \ref{def:L-P} denote
by
$\QQ\in {\rm Hom}\big(H^{s}(\TT^3_y);H^{s+1}(\TT^3_y)\big)$ 
the partial inverse of 
$\LL$
defined by
$$
\QQ \, \Pi =\Pi\, \QQ =0\,,
\qquad
\QQ\,\LL\ =\ \LL\,\QQ\ =\ I-\Pi\,.
$$
\end{definition}

The equation 
$r_{-1}=0$ is equivalent to 
$w_0\in \KK:= \ker \LL$, that is 
\begin{equation}
\label{polarization}
\Pi\, w_0 = w_0\,.
\end{equation}
Using the definitions of $\LL$ and $\MM$, the
 term $r_{0}$ is given by,
\begin{equation}
\label{eq:r0}
r_{0} 
\ =\
 \LL w_1 \ +\
  \big(\MM +  \ i \omega A_0^1+ M\big) w_{0},
\end{equation}
so
 $r_{0}=0$ if and only if,
\begin{equation}\label{eq:r0-vanish}
\LL\, w_1\
+  \big(\MM + \ i \omega A_0^1+ M\big)w_{0}
\ = \
0\,.
\end{equation}
Equation \eqref{eq:r0-vanish} involves both $w_0$ and $w_1$.
Since $\LL$ is selfadjoint on $L^2(\TT^3)$ its range is perpendicular
to its kernel so $\Pi\, \LL =0$.
This is true only because $\Pi$ is the orthogonal projection 
on the kernel $\KK$ for the $L^2(\TT^3)$ scalar product 
(not for the other scalar product in Definition \ref{def:L-P}). 
The equation $\Pi\,r_{0}=0$ yields an equation
for $w_0$ alone,
\begin{equation*}
\Pi\,
\big(\MM \ + \ i \omega A_0^1+\ M\big)
w_0\ =\ 0\,.
\end{equation*}
Taking into account \eqref{polarization},
 this is 
equivalent to 
\begin{equation}
\label{transpt}
\Pi\big(
\MM\ + \ i \omega A_0^1+ \ M\big)\Pi\, w_0\ =\ 0\,.
\end{equation}

\begin{proposition}  
\label{prop:groupvel}
For any $w(t,x,y)\in C^\infty$,
\begin{equation}
\label{eq:Msandwich}
\Pi\, \MM\,\Pi \, w
\ =\ 
\Pi \, A_0^0 \, \Pi \,
\big(\partial_t + \V .\partial_x \big)\, w \,,
\end{equation}
with the group velocity $\V$ from Definition \ref{def:groupvelocity}.
The operator $\Pi \, A_0^0 \, \Pi$ is a linear isomorphism of $\KK$ to itself.
\end{proposition}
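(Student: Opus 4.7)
My plan is to exploit the real analytic dependence of the eigenvalue $\omega(\theta)$ and spectral projector $\Pi(\theta)$ guaranteed by the constant multiplicity Hypothesis \ref{hyp:constantmultiplicity}, together with the formal antiselfadjointness of $\LL$ on $L^2(\TT^3;dy)$, to convert the spatial part of $\Pi\MM\Pi$ into the group velocity transport $\V.\partial_x$ multiplied by $\Pi A_0^0\Pi$. First I would write the Bloch relation $\LL(\omega(\theta),\theta,y,\partial_y)\,\Pi(\theta)=0$, valid on a neighborhood of $\utheta$, differentiate it with respect to $\theta_j$, and evaluate at $\utheta$. Composing on the left with $\Pi$ annihilates the term $\Pi\,\LL\,\partial_{\theta_j}\Pi$ because the range of $\LL$ is $L^2(\TT^3)$-orthogonal to $\KK=\ker\LL$ by antiselfadjointness, so that $\Pi\LL=0$. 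What remains is
$$\partial_{\theta_j}\omega(\utheta)\,\Pi\,(\partial_\omega\LL)\,\Pi \ +\ \Pi\,(\partial_{\theta_j}\LL)\,\Pi \ =\ 0.$$

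Reading off $\partial_\omega\LL = iA_0^0(y)$ and $\partial_{\theta_j}\LL = -i\begin{pmatrix}0 & e_j\wedge\\ -e_j\wedge & 0\end{pmatrix}$ directly from \eqref{defL}, dividing by $i$, and using $V_j=-\partial_{\theta_j}\omega(\utheta)$ converts the displayed relation into the key identity
$$-V_j\,\Pi A_0^0\Pi \ =\ \Pi\begin{pmatrix}0 & e_j\wedge\\ -e_j\wedge & 0\end{pmatrix}\Pi, \qquad j=1,2,3.$$
The matrices on the right are exactly the coefficients of $\partial_{x_j}$ appearing in $\MM$ (via $\curl = \sum_j (e_j\wedge)\partial_{x_j}$), so summing this identity against $\partial_{x_j}w$ and combining with the trivial equality $\Pi A_0^0 \Pi \partial_t w = \Pi A_0^0 \Pi \partial_t w$ coming from the $\partial_t$ piece of $\MM$ yields \eqref{eq:Msandwich}.

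For the isomorphism claim, $\KK$ is finite-dimensional of dimension $\kappa$ by Hypothesis \ref{hyp:constantmultiplicity}, and $A_0^0(y)$ is pointwise symmetric and positive definite; hence, since $\Pi$ is $L^2$-orthogonal, for any nonzero $w\in\KK$ one has
$$\langle \Pi A_0^0\Pi w,\,w\rangle_{L^2(\TT^3)} \ =\ \langle A_0^0 w,\,w\rangle_{L^2(\TT^3)} \ >\ 0,$$
so $\Pi A_0^0\Pi|_\KK$ is symmetric positive definite and therefore invertible as a map $\KK\to\KK$. The only subtlety I anticipate is the legitimacy of the differentiation in $\theta$: one needs $\omega(\theta)$ and $\Pi(\theta)$ to be real analytic near $\utheta$, and this is precisely what the constant multiplicity hypothesis together with the Rellich-Kato analytic perturbation theory invoked via Proposition \ref{prop:analyticfamily} provide. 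Beyond that observation the proof is pure substitution.
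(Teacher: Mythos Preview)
Your argument is correct and follows essentially the same route as the paper: both differentiate the Bloch eigenvalue relation $\LL(\omega(\theta),\theta)\Pi(\theta)=0$ in $\theta_j$, left-multiply by $\Pi$ (using $\Pi\LL=0$) to obtain $\Pi\,\partial_{\theta_j}\LL\,\Pi=0$, and then read off the coefficients to identify $\Pi\MM\Pi$ with $\Pi A_0^0\Pi(\partial_t+\V.\partial_x)$; the isomorphism argument via positive definiteness of $A_0^0$ is likewise identical. One cosmetic point: the coefficient of $\partial_{x_j}$ in $\MM$ carries a minus sign relative to the matrix in your key identity, so ``exactly the coefficients'' is off by a sign, but this is harmless since the signs cancel when you assemble $\Pi\MM\Pi w$.
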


\begin{proof}  Prove the last sentence first.   Since $\KK$ is finite
dimensional, it suffices to prove injectivity.  If $\Pi\, A_0\,\Pi\, k=0$
then
$$
0 \ =\
\big\langle
\Pi\, A_0\,\Pi\, k
\,,\,
k
\big\rangle
\ =\
\big\langle
A_0\,\Pi\, k
\,,\,
\Pi k
\big\rangle
\ \ge\ 
c\,\| \Pi k\|^2
\,,
\qquad c>0\,,
$$
since $A_0$ is strictly postive.  Therefore $k=\Pi k =0$
proving injectivity.

From the definition of $\Pi$ and $\MM$ one automatically
has for arbitrary $w$,
\begin{equation*}
\Pi\, \MM\,\Pi \, w
\ =\ 
\Big(
a_0\,\partial_{t}
\ +\ 
\sum_{j=1}^3
 \,a_j\,
\, \frac{\partial }{\partial x_j}
\Big)\Pi\,w\,,
\end{equation*}
with matrices $a_j(y)$.  
It suffices to compute the $a_j$.
This is done by computing the differential operator
on the  test functions $t \, \psi$, and
$x_j\psi$, with $\psi\in\KK$.
Applying to $t\psi$ and setting $t=0$ yields
\begin{equation}
\label{eq:azero}
a_0\,\Pi 
\ =\ 
\Pi \, A_0^0 \, \Pi \,
\,.
\end{equation}
Applying to $x_j\psi$ and setting $x_j=0$ yields
\begin{equation}\label{eq:aj}
a_j\,\Pi
\ =\ 
-\,
\Pi
\left(
\begin{array}{cc}
0 & e_j\, \wedge \\
-e_j \,\wedge & 0
\end{array}
\right)\Pi \,,
\qquad
\{e_1,e_2,e_3\}
\textup{ is the standard basis of 
}
\RR^3\,.
\end{equation}
The 
identification of $a_j$ from \eqref{eq:aj} and 
\eqref{eq:azero}
requires first order perturbation theory as in 
\eqref{perturb}
of the next proposition.  Second order perturbation theory as
in \eqref{perturb2}
is needed 
for diffractive geometric optics.  The identites are proved by
differentiating the identities $\Pi \,L =0$ and $L\, \Pi=0$. 
We refer the reader to \cite{apr2}, \cite{rauch}
 for  detailed proof.

\begin{proposition}
\label{perturbationtheory}
Suppose that $\utheta$ and 
$\omega$ satisfy the constant multiplicity hypothesis 
\ref{hyp:constantmultiplicity}.   
Suppose that the coefficient $A_0^0$ 
and $\theta$ depend
smoothly on a parameter $\alpha$ with their unperturbed
values attained at $\underline\alpha$.
With ${}^\prime$ denoting differentiation with respect to 
$\alpha$, the following perturbation formulas hold,
\begin{equation}
\label{perturb}
\Pi\,\LL^\prime\,\Pi \ =\ 0\,,
\end{equation}
and
\begin{equation}
\label{perturb2}
\Pi\,\LL^{\prime\prime}\,\Pi 
\ -\ 
2\,\Pi\,\LL^\prime\,Q\,\LL^\prime\,\Pi
\ =\ 0\,.
\end{equation}
\end{proposition}

Returning to the formula for $a_j$,
use \eqref{perturb} with $\alpha$ equal
to the $j^{\rm th}$ component of $\theta$ and
 ${}^\prime =\partial/\partial {\theta_j}$.
Then,
\begin{equation*}
\LL^\prime \ = \ 
i\,\frac{\partial \omega}{\partial\theta_j}\, A_0^0(y)
\  -\  
\left(
\begin{array}{cc}
0 & i \, e_j \wedge \\
-i\, e_j \wedge & 0
\end{array}
\right)\,.
\end{equation*}
The above identity in combination with \eqref{perturb} and \eqref{eq:aj} yeilds 
\begin{equation*}
a_j \,\Pi \ =\
-\,
 \Pi \, A_0^0(y)\, \Pi  \,\frac{\partial \omega}{\partial\theta_j}\,.
\end{equation*}
This together with \eqref{eq:azero}
completes the proof of Proposition \ref{prop:groupvel}.
\end{proof}

Recall Definition \ref{def:gamma}.
The 
map $\gamma$ inherits the regularity of the coefficients,
\begin{equation}
\label{eq:derivgamma}
\partial_{t,x}^\alpha\gamma\in L^\infty\big(\RR^{1+3}
\, ;\,
{\rm Hom}\,\KK\big)
\,.
\end{equation}
Then $r_{-1}=\Pi r_0=0$ exactly when $w_0=\Pi w_0$
satisfies the transport equation
\begin{equation}
\label{eq:w0transport}
\Big(
\partial_t \ +\
\V.\partial_x\ +\
\gamma(t,x)\Big)
w_0\ = \ 0\,.
\end{equation}
The equation $r_0 = 0$ is equivalent to the pair
$$
\Pi \, r_0 \ =\ 0\,, \qquad \Q \, r_0 \ =\  0 \,.
$$
Equation \eqref{eq:r0} shows that $\Q\, r_0 = 0$ if and only if 
\begin{equation}
\label{eq:Qw1}
\big(I-\Pi\big)w_1 \ =\  - \Q \big(\MM + \ i \omega A_0^1+ M \big)w_0 \,.
\end{equation}
The choice of $\Pi \, w_1$ does not influence $r_1, r_0$. Choose 
\begin{equation}\label{eq:Piw1}
\Pi\, w_1 = 0 \,.
\end{equation}

\begin{theorem}\label{error-est1}
If $g\in C^\infty(\R^3;\KK)$ there is a unique
$w_0\in C^\infty(\R;H^\infty(\R^3;\KK))$ satisfying \eqref{eq:w0transport} with
$w_0(0)=g$.
Define $w_1$ and $v^h$ by \eqref{eq:Qw1}, \eqref{eq:Piw1} and \eqref{eq:approx-sol} respectively.
If $u^h$ is the exact solution of $P^h u^h = 0$
with 
$u^h\big|_{t=0}=v^h\big|_{t=0}$, then for all $\alpha$
$$
\sup_{t\in [0,T]}\ \
\big\|
(h\,\partial_{t,x}  )^\alpha
(u^h - v^h)
\big\|_{L^2(\R^3)} \ \leq\  C(\alpha)\,h \,,
 \qquad 0<h<1\,.
$$
\end{theorem}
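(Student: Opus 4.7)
The strategy is to engineer profiles $w_0$ and $w_1$ so that the residual $R^h:=P^h v^h$ is $\Op(h)$ in every semiclassical norm, and then combine Duhamel's formula with the stability estimate of Theorem \ref{thm:stability1}.

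\emph{Construction of the profiles.} The transport equation \eqref{eq:w0transport} is a first-order linear equation along the characteristic lines $x-\V t=\mathrm{const}$ with smooth, uniformly bounded coefficient $\gamma$, cf.~\eqref{eq:derivgamma}. Integrating the resulting linear ODE in $\KK$ yields a unique $w_0$ inheriting the regularity and decay of $g$, so $w_0\in C^\infty(\R;H^\infty(\R^3;\KK))$. Next, $w_1$ is defined by \eqref{eq:Qw1}--\eqref{eq:Piw1}. The application of $\Q$ to $(\MM+i\omega A_0^1+M)w_0$ is justified by the transport equation \eqref{transpt}: it asserts $\Pi(\MM+i\omega A_0^1+M)w_0=0$, so that $(\MM+i\omega A_0^1+M)w_0$ lies in the range of $\LL$ on which $\Q$ inverts. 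Since $\Q$ gains one $y$-derivative (Definition \ref{def:Q}) and preserves smoothness and decay in $(t,x)$, the profile $w_1$ is smooth in $(t,x)$, Schwartz in $x$, and $H^\infty$-periodic in $y$.

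\emph{Residual estimate.} With this choice, $r_{-1}=r_0=0$ in \eqref{residual}, so \eqref{eq:ansatz} reduces to
$$
R^h(t,x)\ =\ e^{iS(t,x)/h}\bigl(h\,r_1+h^2\,r_2\bigr)(t,x,x/h),
$$
where $r_1,r_2$ are smooth in $(t,x,y)$, Schwartz in $x$, and $2\pi$-periodic in $y$. To establish $\|R^h\|_{m,h}\le C(m)\,h$, I would use the two-scale chain rule $(h\partial_{x_j})[F(t,x,x/h)]=h(\partial_{x_j}F)(t,x,x/h)+(\partial_{y_j}F)(t,x,x/h)$ together with $h\partial_t e^{iS/h}=i\omega e^{iS/h}$ and $h\partial_{x_j}e^{iS/h}=i\theta_j e^{iS/h}$. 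Iterating, every $(h\partial_{t,x})^\alpha R^h$ is a finite sum of terms $e^{iS/h}F^h_\alpha(t,x,x/h)$ in which $F^h_\alpha$ is bounded in $L^\infty_{t,x}(\R^{1+3};L^\infty(\TT^3_y))$, Schwartz in $x$, and vanishes to order $h$ in $h$. The pointwise estimate $|F^h_\alpha(t,x,x/h)|\le\|F^h_\alpha(t,x,\cdot)\|_{L^\infty(\TT^3)}$ and integrability in $x$ then yield the desired $L^2(\R^3)$ bound.

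\emph{Conclusion and main obstacle.} The error $\rho^h:=u^h-v^h$ satisfies $P^h\rho^h=-R^h$ with $\rho^h|_{t=0}=0$. By Remark \ref{rmk:gopgrowth} the growth constant $C(m,h)$ in Theorem \ref{thm:stability1} is $\Op(1)$ uniformly in $h$ on $[0,T]$, so Duhamel's principle together with \eqref{eq:derivest} gives
$$
\|\rho^h(t)\|_{m,h}\ \le\ c(m)\int_0^t e^{c(m)C(m,h)(t-s)}\|R^h(s)\|_{m,h}\,ds\ \le\ C(m,T)\,h.
$$
The principal obstacle is the uniform-in-$h$ bookkeeping in the residual estimate: all semiclassical derivatives of the two-scale object $e^{iS/h}F(t,x,x/h)$ must remain $L^2$-bounded. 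This relies crucially on the $H^\infty$ regularity in $y$ obtained by inverting $\LL$ globally through $\Q$, which is precisely why we insisted on making $r_{-1}$ and $r_0$ vanish identically in $(t,x,y)$ rather than merely on the diagonal $y=x/h$.
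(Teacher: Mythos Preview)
Your approach matches the paper's: residual estimate plus stability via Duhamel. There is, however, one genuine gap in the ``Conclusion'' step.

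You write that $\rho^h|_{t=0}=0$ and then invoke Duhamel in the form
\[
\|\rho^h(t)\|_{m,h}\ \le\ c(m)\int_0^t e^{c(m)C(m,h)(t-s)}\|R^h(s)\|_{m,h}\,ds\,.
\]
But the norm $\|\cdot\|_{m,h}$ defined in \eqref{eq:normdef} contains \emph{time} derivatives $(h\partial_t)^k$ as well as space derivatives. The vanishing of $\rho^h$ at $t=0$ gives $\|(h\partial_x)^\alpha\rho^h(0)\|_{L^2}=0$, but it does \emph{not} give $(h\partial_t)^k\rho^h(0)=0$ for $k\ge 1$. So the Duhamel inequality you wrote is missing the term $e^{c(m)C(m,h)t}\|\rho^h(0)\|_{m,h}$, and that term is not zero.

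What you need is a separate argument that $\|\rho^h(0)\|_{m,h}=\Op(h)$. This is done by induction on the number of time derivatives: from $P^h\rho^h=-R^h$ one isolates
\[
(h\partial_t)\rho^h\ =\ (A_0^h)^{-1}\,h\Big(-R^h - (\partial_tA_0^h)\rho^h - \sum_jA_j\partial_j\rho^h - M^h\rho^h\Big),
\]
applies $(h\partial_t)^{\underline k}$, and evaluates at $t=0$. The terms on the right involve at most $\underline k$ time derivatives of $\rho^h$ (controlled by the inductive hypothesis) together with the residual $R^h$ and its derivatives (already shown to be $\Op(h)$). This is exactly the step the paper carries out in the final paragraph of its proof of Theorem~\ref{error-est1}; without it your argument only yields the case $m=0$.
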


\begin{proof}
{\bf I.  Estimate for $P^hv^h$.}    Use \eqref{eq:ansatz}
together with \eqref{residual} and the fact that the
equations satisified by the  functions
$w_j$ guarantee that $r_{-1}=r_0=0$.
Therefore for $h\in ]0,1[$
$$
\big\|
\partial_{t,x,y}^\alpha Z^h
\big\|_{L^\infty([0,T]\times \RR^{3}\times \TT^3)}
\ \le \
C(\alpha)\, h,
\quad
\text{ supp}\, Z^h \subset \big\{(t,x+\V t,y)\, : \, x\in\text{ supp }g  \big\}
\,.
$$
This implies the fundamental residual estimate
\begin{equation*}
\big\|
(h\,\partial_{t,x}  )^\alpha
 \big(P^h v^h\big)
\big\|_{L^\infty([0,T]\times \RR^{3})}
\ \le \
C(\alpha)\, h,
\qquad
h\in ]0,1[\,.
\end{equation*}
Using the compact support one has 
\begin{equation}
\label{eq:residual}
\big\|
(h\,\partial_{t,x}  )^\alpha
 \big(P^h v^h\big)
\big\|_{L^\infty([0,T]; L^2(\RR^{3}))}
\ \le \
C(\alpha)\, h,
\qquad
h\in ]0,1[\,.
\end{equation}

{\bf II.  Stability for $P^h$.}   For $T>0$ and $m$ fixed 
Theorem \ref{thm:stability1} shows that there is a constant $C=C(T,m)$
so that for $0\le t\le T$ and $h\in ]0,1]$
\begin{equation}
\label{eq:stability1}
\big\|
w(t)\big\|_{m,h}
\ \le\
C\,\Big(
\big\|
w(0)\big\|_{m,h}
\ +\
\int_0^t
\big\|
P^h\,w(s)\big\|_{m,h}\ ds
\Big)\,.
\end{equation}

{\bf III.  Combining.}
Apply \eqref{eq:stability1} to $w^h:=u^h-v^h$ to find
\begin{equation*}
\label{eq:difference}
\begin{aligned}
\big\|
(u^h-v^h)(t)\big\|_{m,h}
&\ \le\
C\,\Big(
\big\|
(u^h-v^h)(0)\big\|_{m,h}
\ +\
\int_0^t
\big\|
P^h\,(u^h-v^h)(s)\big\|_{m,h}\ ds
\Big)
\cr
&\ =\
C\,\Big(
\big\|
(u^h-v^h)(0)\big\|_{m,h}
\ +\
\int_0^t
\big\|
P^h\,v^h(s)\big\|_{m,h}\ ds
\Big)
\cr
&\ =\
C\,\Big(
\big\|
(u^h-v^h)(0)\big\|_{m,h}
\ +\
\Op(h)\Big)
\,.
\end{aligned}
\end{equation*}
where the last step uses
the residual estimate \eqref{eq:residual}.

It remains to show that
$w^h=u^h-v^h$ satisfies for
for all $m$,
\begin{equation}
\label{eq:initialestimate2}
\| w^h(0)\|_{m,h} \ =\ \Op(h),
\qquad
h\to 0\,.
\end{equation}
To do that it suffices to show that
for all $0\le k\in \NN$ and $s$,
\begin{equation}
\label{eq:initialestimate}
\|(h\partial_t)^k w^h(0) \|_{H^s_h(\RR^3)}\ =\ \Op(h),
\qquad
h\to 0\,.
\end{equation}
 Since the initial
values vanish, the case $k=0$ is automatic.
Prove \eqref{eq:initialestimate}   by induction on $k$.  Suppose
\eqref{eq:initialestimate} 
known for $0\le k\le \underline{k}$ and all $s$.  We prove it for $\underline{k}+1$
and all $s$.
Begin with the identity
$$
h (A_0^h)^{-1} P^h w^h \ = \
h (A_0^h)^{-1} \big(A_0^h\partial_t w^h + (\partial_t A_0^h)w^h + 
\sum_j A_j \partial_j w^h + M^h w^h 
\big).
$$
The first term on the right is $h\partial_tw^h$.  Therefore
$$
(h\partial_t)^{\underline k+1}w^h
=
(h\partial_t)^k
h\partial_tw^h
=
(h\partial_t)^{ \underline k}
(A_0^h)^{-1}\,
h\,\Big(
P^h
-
(\partial_t A_0^h) -
\sum_j A_j \partial_j -  M^h 
\Big)
w^h.
$$
The inductive hypothesis
 implies that
 $\|(h\,\partial_t)^{\underline{k }+1} u(0)\|_{H^s_h(\RR^3)}
 = \Op(h)$.  This completes the inductive proof
 of \eqref{eq:initialestimate} and therefore the proof of the theorem.
\end{proof}


\section{Bloch wave packets on a modulated background and $t=\Op(1/h)$}
\label{sec:gop}

This section considers solutions of the Maxwell's equations
with coefficients and lower order terms satisfying
Hypothesis \ref{hyp:diffractive}  so as to 
become pertinent at $t\sim 1/h$.

\begin{remark}
\label{rmk:diffr}
With permittivities satisfying
Hypothesis \ref{hyp:diffractive},
the growth rate $C(m,h)$ from Theorem \ref{thm:stability1}
satisfies $C(m,h)\le c(m)\,h$ 
with a constant $c(m)$ independent of $h\in ]0,h[$.
The time evolution is uniformly bounded so long as 
the product $\,t\times h$ 
remains bounded.
\end{remark}

Motivated by the special case of purely periodic media
in Section \ref{ppgop},
the {\it ansatz} expected to be valid for 
times $t=\Op(1/h)$  is of two scale WKB type,
\begin{equation}
\label{eq:approx-diffr}
v^h(t,x)\ :=\ 
e^{iS(t,x)/{h}} 
W\Big(h,ht, t,x,\frac{x}{h}\Big)
\,,
\qquad
S(t,x)
\ =\
 \omega\,t + \theta\, .x\,,
\end{equation}
$$
W\big(h,\caT, t,x,y\big)
\ =\ 
w_0(\caT, t,x,y)
\ +\ 
 h  w_1(\caT , t,x,y) 
  \ +\ 
 h^2  w_2(\caT, t,x,y) 
,
$$
where the $w_j(\caT, t,x,y)$ are periodic
functions of $y$ with period $2\pi$.
To go further in time requires 
the additional corrector 
$w_2$.
In order to preserve the relative ordering of the 
size of the terms for $t=\Op(1/h)$
 it is crucial to insist that {\it the 
$w_j(\caT,t,x,y)$ are sublinear in $t$,}
$$
\lim_{t\to \infty}
\ \ 
 \frac{w_j(\caT,t,x,y)}{t}\ =\ 0
$$
uniformly in $\caT,x,y$.    We construct
profiles satisfying a stronger hypothesis.

Compute using \eqref{eq:derivativeformulas}
\begin{equation}
\label{eq:ansatz2}
P^h(t,x,\partial_t,\partial_x) v^h \  =\ 
e^{iS(t,x)/h} \,
Z^h(h\caT, t,x,x/h)\,,
\qquad {\rm with}
\end{equation}
\begin{equation}
\begin{aligned}
 Z^h(\caT,   &  t,x,y)\ := \
 \Bigg[
\Big(\frac{i\omega}{h}+ \partial_t
+h\partial_\caT \Big)A_0^h  
\ -\ 
\\
&\begin{pmatrix}
0  & \big(\,\partial_x + i\theta/h\, \big)\wedge 
\\
\ -\
\big(\,\partial_x + i\theta/h\, \big)\wedge  & 0    
\end{pmatrix}
-
\begin{pmatrix}
0  & \frac{1}{h} \,\partial_y \wedge \\
-\frac{1}{h} \,\partial_y \wedge  & 0    
\end{pmatrix}
+hM
\Bigg]
W \,.
\end{aligned}
\end{equation}
Then
\begin{equation}
\label{residual2}
Z^h(t,x,y) \ =\
 h^{-1} \, r_{-1} \, + \,  r_0 \, + \, h \, r_1\ +\ h^2\,r_2
 \ +\ h^3\,r_3 \,,
\qquad
r_j
\ =\
r_j(t,x,y)\,.
\end{equation}
As in the case of $t=\Op(1)$, we
 achieve the ambitious goal of 
 choosing the $w_j$ so that
 the leading
$r_{-1}=r_{0} = r_1=0$ 
everywhere.  This reduces the residual to $\Op(h^2)$ which
allows us to justify for times $t=\Op(1/h)$.

One has
$$
r_{-1} 
\ =\
 \LL  w_0 \qquad \text{and} \qquad
r_0
\ =\
 \LL w_1 + \MM w_0
$$
with $\LL$ and $\MM$ defined by \eqref{defL} and \eqref{eq:firstM} respectively.
The next coefficient is
\begin{equation*}
\begin{aligned}
r_1 \, = \, \,  & \partial_t  A_0^0 w_1 + \partial_\tau(A_0^0 w_0) + 
i\omega(A_0^0 w_2 + A_0^1 w_0) -
\begin{pmatrix}
0  & \partial_x \wedge 
\\
\ -\
\partial_x \wedge  & 0    
\end{pmatrix} w_1 \\
&\hskip3.5cm  - 
\begin{pmatrix}
0  & \big(\, i\theta+ \partial_y \, \big)\wedge 
\\
\ -\
\big(\, i\theta+ \partial_y \, \big)\wedge  & 0    
\end{pmatrix} w_2  
+ M w_0 \\
= \,\, & 
 \LL w_2 + \MM w_1
 + \NN w_0 \,,
\end{aligned}
\end{equation*}
where
\begin{equation}\label{defN}
\NN
\ :=\
 \partial_\caT A_0^0   + i\omega A_0^1 + M  \,.
\end{equation}
As in Section \ref{sec:gop},   $r_{-1}=0$ and $\Pi\ r_0 = 0$ 
 if and only if
$$
\Pi w_0 = w_0\,, \qquad
{\rm and}
\qquad
 (\partial_t + \V .\partial_x) w_0 = 0 \,,
$$
with $\V$ as in Definition \ref{def:groupvelocity}.
Thus there is a reduced $\KK$ valued profile $\widetilde{w}_0$ 
such that 
$$
w_0(\caT,t,x,y)
\ =\
 \widetilde{w}_0(\caT ,x-\V t,y) \,.
$$
In order to determine $\widetilde{w}_0$ one needs a 
dynamic equation in $\caT$.
The equation $\Q \ r_0 = 0$ yields 
\begin{equation}\label{eq:(I-Pi)w1}
\big(I-\Pi\big)w_1 \ =\  - \Q \ \MM \ w_0 \,,
\end{equation}
and thus 
\begin{equation}\label{eq:transportw1}
(\partial_t + \V. \partial_x)\big(I-\Pi\big)w_1 \ =\ 0 \,.
\end{equation}
The equation $\Pi r_1=0$ yields the Schr\"odinger equation determining the 
dynamics of $\widetilde{w}_0$. 
Plugging \eqref{eq:(I-Pi)w1} into the equation $\Pi r_1=0$ yields 
\begin{equation}\label{eq:r1=zero}
\Pi\ \MM\ \Pi w_1 -\Pi\ \MM\ \Q\ \MM\ \Pi w_0 
+\Pi\ \N\ \Pi w_0 = 0 \,. 
\end{equation}
Proposition \ref{prop:groupvel} and \eqref{eq:transportw1} imply that 
equation \eqref{eq:r1=zero} is 
equivalent to
\begin{equation}\label{eq:rough-schrod}
\big(
\Pi\ \N\ \Pi - \Pi\ \MM\ \Q\ \MM\ \Pi 
\big)w_0 \ = \
-\Pi\ A_0^0\ \Pi
\big(\partial_t + \V.\partial_x \big)\Pi w_1  \,.
\end{equation}
The next proposition  identifies the operator on the 
left hand side of \eqref{eq:rough-schrod}.

\begin{proposition}
\label{prop-schrod}
On smooth functions $w(\caT,t,x,y)$ that satisfy
$(\partial_t + \V. \partial_x)w=0$, 
\begin{equation}
\label{miracle}
\big(\Pi\,\NN\,\Pi 
  \ -\ \Pi\,\MM\,Q\,\MM\,\Pi \Big)\,w
  \ =\ 
\Pi A_0^0\Pi
\Big[
\partial_\caT    +
\frac{1}{2}i \  
\partial^2_\theta\omega(\partial_x,\partial_x)  
+ \gamma(t,x) 
\Big] w \,.
\end{equation}
\end{proposition}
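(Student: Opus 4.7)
The plan is to split the left-hand side as $\Pi\NN\Pi\,w - \Pi\MM\,\Q\,\MM\,\Pi\,w$ and handle each piece separately. The first is a short definitional computation; the substance lies in the second, which hinges on second-order perturbation theory.

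For the first piece, reading $\NN$ as the operator $\NN w = \partial_\caT(A_0^0 w) + (i\omega A_0^1 + M)w$ and using that $A_0^0(y)$ does not depend on $\caT$,
$$\Pi\NN\Pi\,w \ =\ \Pi A_0^0\Pi\,\partial_\caT w \ +\ \Pi(i\omega A_0^1 + M)\Pi\,w \ =\ \Pi A_0^0\Pi\,(\partial_\caT + \gamma)\,w,$$
the second equality by the definition \eqref{eq:defgamma} of $\gamma$.

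For the second piece, the decisive step is to identify $\MM$ with the first-order perturbation of $\LL$. Fix $\xi\in\RR^3$, consider the one-parameter family $s\mapsto \LL\bigl(\omega(\utheta+s\xi),\utheta+s\xi,y,\partial_y\bigr)$, and let ${}^\prime$ denote $\partial_s|_{s=0}$. Direct computation gives
$$\LL'(\xi) \ =\ i\bigl(\xi\cdot\nabla_\theta\omega(\utheta)\bigr)A_0^0(y) \ -\ i\begin{pmatrix}0 & \xi\wedge\\ -\xi\wedge & 0\end{pmatrix}, \qquad \LL'' \ =\ i\,\partial^2_\theta\omega(\utheta)(\xi,\xi)\,A_0^0(y),$$
the curl block contributing nothing to $\LL''$ since it is linear in $s$. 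Applying \eqref{perturb2} then yields $\Pi\LL'(\xi)\Q\LL'(\xi)\Pi = \tfrac{i}{2}\,\partial^2_\theta\omega(\utheta)(\xi,\xi)\,\Pi A_0^0\Pi$. Now pass to the Fourier transform in $x$: using the constraint $\partial_t w = -\V\cdot\partial_x w$ together with $\V = -\nabla_\theta\omega(\utheta)$, one obtains $\widehat{\MM w}(\xi) = \LL'(\xi)\,\widehat w(\xi)$ fiber-wise in $y$, so $\widehat{\Pi\MM\Q\MM\Pi w}(\xi) = \tfrac{i}{2}\,\partial^2_\theta\omega(\utheta)(\xi,\xi)\,\Pi A_0^0\Pi\,\widehat w(\xi)$. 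Inverting Fourier via $\xi_i\xi_j \leftrightarrow -\partial_{x_i}\partial_{x_j}$ gives $\Pi\MM\Q\MM\Pi\,w = -\tfrac{i}{2}\,\Pi A_0^0\Pi\,\partial^2_\theta\omega(\partial_x,\partial_x)\,w$. Subtracting from the first piece produces exactly the Schr\"odinger operator on the right-hand side.

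The main obstacle is the identification $\widehat{\MM w}(\xi) = \LL'(\xi)\widehat w(\xi)$: $\MM$ is a $(t,x)$-differential operator acting on functions of $(\caT,t,x,y)$, while $\LL'(\xi)$ is a $y$-multiplication operator at fixed $\xi$. The equality is valid only modulo the transport constraint, which eliminates $\partial_t$ in favour of $-\V\cdot\partial_x$; once this translation is secured, the claim reduces mechanically to the abstract identity \eqref{perturb2} of Proposition \ref{perturbationtheory}.
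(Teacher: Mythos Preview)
Your proof is correct and follows essentially the same route as the paper's: both hinge on the second-order perturbation identity \eqref{perturb2} applied to the one-parameter family $\theta = \utheta + s\xi$ (the paper writes $k$ for your $\xi$), and both use the transport constraint $\DD w = 0$ to reduce $\MM$ to $\LL'(\xi)$. The only cosmetic difference is that the paper keeps $\partial_t$ in the algebraic decomposition $\LL' = i\MM(y,\partial_t,k) - iA_0^0\DD(\partial_t,k)$ and kills the extra $\DD$-terms at the end, whereas you impose the transport constraint first via Fourier in $x$ and thereby obtain $\widehat{\MM w}(\xi) = \LL'(\xi)\widehat w(\xi)$ directly; your ordering is slightly cleaner since it avoids tracking cross terms that only vanish after application to $w$.
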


\begin{remark}  
\label{rmk:scalar}
It is surprising to find that the operator 
$$
(\Pi A_0^0\Pi)^{-1}
\big(\Pi\,\NN\,\Pi 
  \ -\ \Pi\,\MM\,Q\,\MM\,\Pi \Big)
  $$
  acting on $\KK$ valued functions  
  has leading terms that are scalar.
  Coupling only occurs through the zero order
  term $\gamma$.
  A related zero order coupling occurs in 
  \S 6 of 
  \cite{ap}
  \end{remark}  

\begin{proof}
With $k=(k_1,k_2,k_3)\in\R^3$ fixed,
$\theta:=\utheta + \alpha k$, $\alpha\in\R$, differentiate 
$\LL(\omega,\theta,y,k)$ with respect to $\alpha$ to find
\begin{equation}\label{eq:alpha-der1}
\begin{aligned}
& \LL'= ik A_0^0\partial_\theta\omega - 
\begin{pmatrix}
0 & i k \wedge \\
-ik \wedge & 0
\end{pmatrix} =
i\MM(y,\partial_t,k) - iA_0^0\,\DD(\partial_t ,k)\,,\\
& \LL''= i (k\partial_\theta)^2\omega A_0^0 \,.
\end{aligned}
\end{equation}
Using the first identity yields
\begin{equation*}
\Pi\ \LL'\ \Q\ \LL'\ \Pi \  =\
 - \Pi \ \MM\ \Q\ \MM\ \Pi  -
\Pi\ A_0^0\ \DD(\partial_t ,k) \ 
\Q\ A_0^0\ \DD(\partial_t ,k) \ \Pi \,.
\end{equation*}
Applying \eqref{perturb2} yields
\begin{equation}
\label{eq:alpha-der2}
 \frac{1}{2}\, \Pi\ \LL'' \ \Pi 
  \ =\ 
 - \Pi \ \MM\ \Q\ \MM\ \Pi  -
\Pi\ A_0^0\ \DD(\partial_t ,k) \ 
\Q\ A_0^0\ \DD(\partial_t ,k) \ \Pi \,.
\end{equation}
Equation \eqref{eq:alpha-der2} and  the 
definition of $\N$ in \eqref{defN}, give
\begin{equation}\label{almost-schrod}
\begin{aligned}
\Big(\Pi\,\NN\,\Pi   &
  \ -\     \Pi\,\MM\,Q\,\MM\,\Pi \Big)\,w
  \\
  \ =\ &
\Pi\Big[
\partial_\caT A_0^0  + i\omega A_0^1 + M +
\frac{1}{2}i \  (k\partial_\theta)^2\omega A_0^0  
  + A_0^0\,\DD(\partial_t ,k) \ 
\Q\ A_0^0\,\DD(\partial_t ,k)
\Big]\Pi w \,.
\end{aligned}
\end{equation}
Next replace $k$ by $\partial_x$ and simplify the right hand side of 
\eqref{almost-schrod} using 
$$
\DD(\partial_t,\partial_x)w=0  \,.
$$
This yields
\begin{equation*}
\begin{aligned}
\Big(\Pi\,\NN\,\Pi  
  \ -\     \Pi\,\MM\,Q\,\MM\,\Pi \Big)\,w
  &\ =\ 
\Pi\Big[
\partial_\caT A_0^0  + 
i\omega A_0^1  
  + M +
\frac{1}{2}i \  
\partial^2_\theta\omega(\partial_x,\partial_x) A_0^0    
\Big]\Pi w  \\
&\ = \
\Pi A_0^0\Pi
\Big[
\partial_\caT    +
\frac{1}{2}i \  
\partial^2_\theta\omega(\partial_x,\partial_x)  
+ \gamma(t,x) 
\Big] w \,.
\end{aligned}
\end{equation*}
\end{proof}

\subsection{Ray averages.}
\label{sec:rayaverages}

In general it is impossible to satisfy \eqref{eq:rough-schrod} exactly
since all the terms are annihilated by $\partial_t+\V\partial_x$
except the $\gamma(t,x)$ term.  If the coefficients are constant
on group lines the $\gamma$ term is constant too and one can construct
infinitely accurate solutions (see \cite{apr2}).  In the present case
we replace $\gamma$ by  its average on rays to find a solvable
equation.  Then estimate the error.  For that estimate we
impose an assumption on $\gamma$ slightly stronger
than the existence of ray averages.  The material is recalled
from \cite{apr2} where the proofs and additional
information can be found.

Assume that $\gamma\in C^\infty(\RR^{1+3}\,;\,{\rm Hom}(\KK))$
satisfies \eqref{eq:derivgamma} and that the averages on rays
exist as in \eqref{eq:meangamma}.
It follows that the $\widetilde \gamma$ is smooth and that
the ray averages of the derivatives of $\gamma$ exist uniformly
on compacts and satisfy
\begin{equation}
\label{eq:derivmean} 
\lim_{T\to +\infty}\
\Big\|
\frac{1}{T}\,
\int_0^T
\partial_t^j\partial_x^\beta
\gamma(t,x+\V t)
\ dt
\ -\
(-\V.\partial_x)^j\partial_x^\beta\widetilde\gamma(x)
\Big\|_{L^\infty(\RR^N)}
\ =\ 0\,.
\end{equation}

 We need  more than this.  
The function $\widetilde \gamma(x)$ is the average on the
ray intersecting $t=0$ at $x$. 
The ray passing through the point $(t,x)$
intersects $t=0$ at $x-\V t$.   The function which 
assigns to $(t,x)$ the average value of $\gamma$ on the 
ray through $(t,x)$ is equal to $\widetilde\gamma(x-\V t)$. 
The function
that subtracts from $\gamma(t,x)$ its 
average on the group line through $(t,x)$ is 
equal to
$\gamma(t,x)-\widetilde\gamma(x-\V t)$.

Consider the ${\rm Hom } (\KK)$ valued solution $g$ of the 
transport equation 
\begin{equation}
\label{eq:2}
\Big(
\partial_t \ +\ 
\V.\partial_x\Big)g
\ =\ 
\gamma(t,x) -\widetilde\gamma(x-\V t)
\,,
\qquad
g\big|_{t=0} = 0\,.
\end{equation}
Then
\begin{equation*}
\frac{g(t,x)}{t}
\ =\ 
\frac{1}{t}
\int_0^t \gamma(s, \widetilde x + \V s)\,ds
\ -\ 
\widetilde \gamma(\widetilde x),
\qquad
\widetilde x := x-\V t
\,.
\end{equation*}
Assumption \eqref{eq:meangamma}
is equivalent to the fact 
that this
is $o(1)$ as $t\to+\infty$,
\begin{equation}
\label{eq:littleo}
\lim_{t\to +\infty}\ \ 
\sup_{x\in \RR^N}\ \ 
\frac{  \| g(t,x) \| }{t}\ =\ 0\,.
\end{equation}

Assume that $\gamma$ satisfies the ray average hypothesis
in Definition \ref{def:rayaverage}.

\begin{example}
{\bf i.}  If $\gamma(t,x)=f(\ell(t,x))$ where $f(\theta)$ is a smooth
periodic function of arbitrary period and $\ell$ is 
a linear functional then the ray average hypothesis is satisfied
with $\beta=0$.

{\bf ii.}  If
${\mathcal M}:\RR^{1+N}\to \RR^M$ is linear and satisfies the 
(generic) small divisor hypothesis
$$
\exists \,C>0, \  m\in \NN,\quad
\forall n\in \NN^{M},\quad
n.{\mathcal M}(1,\V)\ne  0 \ 
\Rightarrow\
|(n.{\mathcal M}(1,\V )| \ \ge \ C\,|n|^{-m}\,,
$$
then, for 
$h(\theta_1,\dots , \theta_M)\in C^\infty(\TT^M)$ 
the quasiperiodic function
$\gamma(t,x) =h({\mathcal M}(t,x))$ satisfies the hypothesis
with $\beta=0$ 
(see \cite{jmrduke}).

{\bf iii.} 
Consider smooth almost periodic $\gamma$ 
of the form
\begin{equation}
\label{eq:almostper}
\gamma(t,x)
\ =\
\sum_{\eta\in \R^{1+N}}
a_\eta\
e^{i\eta.(t,x)}\,,
\end{equation}
where $a_\eta$ vanish for all
but a countable family of $\eta$ and  satisfy
\begin{equation}
\forall n\in \N,\qquad
\label{eq:coeffhyp}
\sum_\eta
\
\langle \eta\rangle^n
\
\big|
a_\eta
\big|
\ <\
\infty
\qquad
\langle
\eta
\rangle:=
(1+|\eta|^2)^{1/2}\,.
\end{equation}
Then 
$
\gamma(t,x) \ -\
\widetilde \gamma(x-\V t)\ =\
\sum_{\eta.(1,\V)\ne 0}
a_\eta\
e^{i\eta.(t,x)}\,.
$
Suppose that   there is an $\alpha>0$ so that
for all $n$
\begin{equation}
\label{eq:lowfreq}
 \sum_{0<|\eta.(1,\V)|<\delta}
\langle \eta\rangle^n\
\big|
a_\eta
\big|
\ =\
\caO(\delta^\alpha),
\qquad
\delta\to 0\,.
\end{equation}
Then the ray average hypothesis
of Definition \ref{def:rayaverage} holds with 
$\beta={\alpha}/(\alpha+1)$.
\end{example}

\subsection{Using ray averages}
Rewrite \eqref{eq:rough-schrod} as
\begin{equation}
\label{eq:9.20}
\Big(\partial_\caT   + 
\frac{1}{2}i \  
\partial^2_\theta\omega(\partial_x,\partial_x)  
+\widetilde \gamma(x-\V t) 
\Big) w_0 
\ = \
-\big(\partial_t + \V.\partial_x \big)\Pi w_1
- \big(\gamma(x) - \widetilde\gamma(x-\V t)\big)w_0
\,.
\end{equation}
This equation is satisfied by choosing $w_0$ and $\Pi w_1$ so that
both sides are identically zero.   The left yields
equation \eqref{eq:transp-schrod}.
The initial value, $\widetilde w_0(0,x)\in {\caS}(\RR^3\,;\,\KK)$
is arbitrary.  

\begin{lemma}
\label{lem:schrodweight}
  For any $f\in \caS(\RR^3;\KK)$ there is a 
unique solution $\zeta(\caT,x)\in C^\infty(\RR_\caT\,;\, \caS(\RR^3;\KK))$ 
to the Schr\"odinger initial value
problem
\begin{equation}\label{operatorS}
\Big(\partial_\caT   + 
\frac{1}{2}i \  
\partial^2_\theta\omega(\partial_x,\partial_x)  
+\widetilde \gamma(x) 
\Big)\zeta
\ = \
0\,,
\qquad
\zeta(0)\ =\ f\,.
\end{equation}
For each $m,s,r\in\NN$, 
$|\alpha|\leq m$, $|\kappa|\leq s$ there exist constants 
$c(m,s,r),b(m,s)$ 
so that
for all $\caT$
$$
\|x^\kappa\partial_{x}^{\alpha} \partial_\caT^r \zeta (\caT)\|_{L^2(\R^3)}
\leq
c(m,s,r)
e^{\big(1+ b(m,s)
\| \widetilde\gamma\|_{_{W^{m+s,\infty}}}\big)
\caT }\ 
(1+\|\widetilde\gamma\|_{W^{r,\infty}})^r
\sum_{|\ell|=0}^s
\|x^\ell f\|_{H^{m+ s -\ell+ 2r}}  \,.
$$
\end{lemma}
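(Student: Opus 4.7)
The plan is a direct energy-method proof, leveraging that the principal operator $L_0 := \frac{1}{2}i\,\partial^2_\theta\omega(\partial_x,\partial_x)$ has constant coefficients and is antiselfadjoint on $L^2(\R^3;\KK)$, since $\partial^2_\theta\omega(\utheta)$ is a real symmetric Hessian acting scalarly on vector functions. Thus $-L_0$ generates a strongly continuous unitary group on $L^2$, the bounded zero-order perturbation $-\widetilde\gamma(x)$ yields a well-posed evolution, and the Schwartz-class regularity $\zeta \in C^\infty(\R;\caS(\R^3;\KK))$ follows a posteriori from the weighted Sobolev bounds below. The baseline case $m = s = r = 0$ is immediate from
\[
\frac{d}{d\caT}\|\zeta\|_{L^2}^2 \ =\ -2\,\mathrm{Re}\int \langle \widetilde\gamma\,\zeta,\zeta\rangle\,dx\ \leq\ 2\|\widetilde\gamma\|_{L^\infty}\|\zeta\|_{L^2}^2.
\]

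For the case $r = 0$, I would introduce the graded energy
\[
F_{m,s}(\caT)\ :=\ \sum_{i=0}^{s}\ \sum_{\substack{|\kappa|=i\\ |\alpha|\leq m+s-i}}\ \|x^\kappa\partial_x^\alpha\zeta(\caT)\|_{L^2}^2,
\]
whose initial value is comparable to $\sum_{|\ell|\leq s}\|x^\ell f\|_{H^{m+s-|\ell|}}^2$. Applying $x^\kappa\partial_x^\alpha$ to the Schr\"odinger equation and using that $L_0$ commutes with $\partial_x^\alpha$ yields
\[
(\partial_\caT + L_0 + \widetilde\gamma)(x^\kappa\partial_x^\alpha\zeta)\ =\ -[x^\kappa,L_0]\,\partial_x^\alpha\zeta\ -\ x^\kappa[\partial_x^\alpha,\widetilde\gamma]\zeta.
\]
The identity $[x_j,L_0] = i\sum_k\partial^2_{\theta_j\theta_k}\omega(\utheta)\,\partial_{x_k}$ shows that $[x^\kappa,L_0]$ has bidegree at most $(|\kappa|-1,1)$ in $(x,\partial_x)$, so $[x^\kappa,L_0]\partial_x^\alpha\zeta$ is a sum of terms $x^{\kappa'}\partial_x^{\alpha'}\zeta$ with $|\kappa'|\leq|\kappa|-1\leq s-1$ and $|\kappa'|+|\alpha'|\leq|\kappa|+|\alpha|\leq m+s$; every such term is controlled by $\sqrt{F_{m,s}}$. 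Similarly $x^\kappa[\partial_x^\alpha,\widetilde\gamma]\zeta$ is bounded by $\|\widetilde\gamma\|_{W^{m+s,\infty}}\sqrt{F_{m,s}}$. The antiselfadjointness of $L_0$ then gives
\[
\frac{d}{d\caT}F_{m,s}\ \leq\ C(m,s)\bigl(1 + \|\widetilde\gamma\|_{W^{m+s,\infty}}\bigr)\,F_{m,s},
\]
and Gronwall closes the bound at the level $r=0$.

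Time derivatives are absorbed by observing that $\zeta_r := \partial_\caT^r\zeta$ satisfies the same equation with initial data $\zeta_r(0) = (-L_0 - \widetilde\gamma)^r f$. Applying the $r=0$ estimate to $\zeta_r$ (with $m$ increased by $2r$ to accommodate the extra derivatives in the initial datum) reduces the problem to bounding $\|x^\ell\zeta_r(0)\|_{H^{m+s-|\ell|+2r}}$ for $|\ell|\leq s$. Expanding $(-L_0 - \widetilde\gamma)^r$ as a sum of $2^r$ ordered products and pushing each $\widetilde\gamma$-factor to the right past the $L_0$-factors via $L_0\widetilde\gamma = \widetilde\gamma L_0 + [L_0,\widetilde\gamma]$ (each commutator being first order in $\partial_x$ with coefficients given by derivatives of $\widetilde\gamma$), each term is bounded by a constant multiple of $(1 + \|\widetilde\gamma\|_{W^{r,\infty}})^r\,\|x^\ell f\|_{H^{m+s-|\ell|+2r}}$, yielding the full estimate.

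The main obstacle is the bookkeeping for $[x^\kappa, L_0]$: a naive total-degree energy of the form $\sum_{|\kappa|+|\alpha|\leq N}\|x^\kappa\partial_x^\alpha\zeta\|_{L^2}^2$ would demand initial data $\|x^N f\|_{L^2}$, strictly stronger than the mixed norm $\sum_{|\ell|\leq s}\|x^\ell f\|_{H^{m+s-|\ell|}}$ actually appearing in the conclusion. The key observation is that the commutator $[x_j, L_0]$ trades exactly one power of $x$ for one power of $\partial_x$, so the asymmetric graded energy $F_{m,s}$ (capping $|\kappa|$ at $s$ while allowing total order to reach $m+s$) is the correct invariant for the commutator dynamics. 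Once this grading is adopted, the remainder of the argument is the standard antiselfadjoint energy estimate plus algebraic manipulation.
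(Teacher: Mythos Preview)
Your proposal is correct and follows essentially the same strategy as the paper: the basic $L^2$ bound via antiselfadjointness of $L_0$, propagation of derivatives and weights through the commutator identity $[x_j,L_0]=i\sum_k\partial^2_{\theta_j\theta_k}\omega(\utheta)\,\partial_{x_k}$, and reduction of the $\partial_\caT^r$ case to $r=0$ by observing that $\partial_\caT^r\zeta$ solves the same equation. The only organizational difference is that the paper proceeds in two inductive layers (first $H^m$ via Duhamel with the propagator $K(\caT,s)$, then the weighted estimate by a second Duhamel induction on $|\kappa|$), whereas you package both into a single graded energy $F_{m,s}$ and close with one Gronwall inequality; your observation that $[x_j,L_0]$ trades exactly one weight for one derivative, so that the asymmetric grading capping $|\kappa|\le s$ is the correct closed quantity, is precisely what makes the paper's two-step induction terminate with the stated initial-data norm $\sum_{|\ell|\le s}\|x^\ell f\|_{H^{m+s-|\ell|}}$ rather than the cruder $\sum_{|\ell|\le m+s}\|x^\ell f\|_{L^2}$.
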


\begin{proof}  
We present only the a priori estimates.
Multiplying the equation by $\bar\zeta$ and taking the real part 
yields
$$
\frac{1}{2}\frac{\partial}{\partial\caT}\int_{\R^3}|\zeta|^2 \ dx +
\Re
\int_{\R^3}\langle \widetilde \gamma \zeta , \zeta\rangle \ dx \ = \ 0 \,.
$$
Estimate 
$$
\Re
\int_{\R^3}\langle \widetilde \gamma \zeta , \zeta\rangle  \ dx \leq
\| \widetilde\gamma\|_{_{L^\infty}} 
\int_{\R^3}|\zeta|^2 \ dx \,,
$$
and integrate with respect to $\caT$ to find
$$
\frac{1}{2}\int_{\R^3}|\zeta(\caT)|^2 \ dx 
\leq
\frac{1}{2}\int_{\R^3}|f|^2 \ dx \ + \ 
\| \widetilde\gamma\|_{_{L^\infty}}
\int_0^\caT\int_{\R^3}|\zeta(\caT)|^2 \ dx  \,.
$$
Gronwall's inequality gives
\begin{equation}\label{eq:firstpart}
\|\zeta(\caT)\|_{L^2(\R^3)} \  \leq  \ 
e^{\| \widetilde\gamma\|_{_{L^\infty}}\caT }\ 
\|f\|_{L^2(\R^3)} \,.
\end{equation}
The Duhamel relation 
$$
\begin{aligned}
\partial_{x_j} \zeta (\caT) \ & =\ K(\caT,0) \partial_{x_j} \zeta (0) 
\ +\ 
\int_0^\caT K(\caT,s)\ P(\partial_{x_j} \zeta)(s)\ ds \\
& = 
\ K(\caT,0) \partial_{x_j} f
\ +\ 
\int_0^\caT K(\caT,s)\ (-\partial_{x_j }\widetilde\gamma)\zeta(s)\ ds 
\end{aligned}
$$
then yields the estimate 
$$
\| \partial_{x_j} \zeta (\caT) \|_{L^2(\R^3)}\  \leq \    
e^{\| \widetilde\gamma\|_{_{L^\infty}}\caT }\ 
\| \partial_{x_j} f \|_{L^2(\R^3)} \ + \ 
\int_0^\caT
e^{\| \widetilde\gamma\|_{_{L^\infty}}(\caT-s)} 
\|\nabla \widetilde\gamma\|_{L^\infty} 
\|\zeta(s)\|_{L^2(\R^3)} \, ds \,.
$$
This combined with \eqref{eq:firstpart} gives
$$
\|\zeta(\caT)\|_{H^1(\R^3)} 
\  \leq  \ 
2\ e^{\| \widetilde\gamma\|_{_{L^\infty}}\caT }\ 
\|f\|_{H^1(\R^3)} \ + \ 
\int_0^\caT
e^{\| \widetilde\gamma\|_{_{L^\infty}}(\caT-s)} 
\|\nabla \widetilde\gamma\|_{L^\infty} 
\|\zeta(s)\|_{L^2(\R^3)} \, ds \,.
$$
Now apply Gronwall's inequality to get
$$
\|\zeta(\caT)\|_{H^1(\R^3)} \leq 
2
e^{\| \widetilde\gamma\|_{_{W^{1,\infty}}}\caT }\ 
\|f\|_{H^1(\R^3)} \,.
$$
By induction one proves that 
$$
\sup_{|\alpha|\le m} 
\big\|
(\partial_x)^\alpha\, \zeta(\caT)
\big\|_{L^2(\RR^3)}
\  \le \
2 \ 
e^{b(m) \, \| \widetilde\gamma\|_{_{W^{m,\infty}}}\caT }\ 
\| f \|_{H^m(\R^3)} \,.
$$
Let us prove the weighted estimate  $x^\kappa\partial_x^\alpha$.
The commutator of $x_j$ with the Schr\"odinger operator $S$ 
in \eqref{operatorS} is the 
first order scalar differential operator
$$
[S, x_j] = 
i \sum_l (\partial_\theta^2\omega)_{lj}\partial_{x_j}\,.
$$
Therefore, for $|\alpha|\leq m$ 
$$
\begin{aligned}
x_j\partial_{x}^{\alpha} \zeta (\caT) \ & =\ 
K(\caT,0) x_j\partial_{x}^{\alpha} \zeta (0)
\ +\ 
\int_0^t K(\caT,s)\ S(x_j\partial_{x}^{\alpha} \zeta)(s)\ ds \\
& = 
\ K(\caT,0) x_j\partial_{x}^{\alpha} f
\ +\ 
\int_0^t K(\caT,s)\ 
[S, x_j] (\partial_{x}^{\alpha} \zeta)(s)
\ ds  \,,
\end{aligned}
$$
which yields
$$
\|x_j\partial_{x}^{\alpha} \zeta (\caT)\|_{L^2(\R^3)}
\leq
c \,
e^{\big(1 + b(m+1)\| \widetilde\gamma\|_{_{W^{m+1,\infty}}}\big)
\caT }\ 
\Big[
\|x_j g\|_{H^m(\R^3)} \ + \ 
\| f \|_{H^{m+1}(\R^3)} 
\Big]
\,.
$$
By induction one proves, for $|\kappa| \leq s$,
\begin{equation}\label{eq:ugly}
\|x^\kappa\partial_{x}^{\alpha} \zeta (\caT)\|_{L^2(\R^3)}
\leq
c(m,s)
e^{\big(1 + b(m,s)
\| \widetilde\gamma\|_{_{W^{m+s,\infty}}}\big)
\caT }\ 
\sum_{|\ell|=0}^s
\|x^\ell  f \|_{H^{m+s-\ell}}  \,.
\end{equation}
The time derivative commutes with the Schr\"odinger operator, 
therefore, for each $r\in \NN$,  
$\partial_\caT^r \zeta$ satisfies the same equation as $\zeta$ 
with initial condition 
$$
|\partial_\caT^r\zeta(0)| \leq 
C(r)(1+\|\widetilde\gamma\|_{W^{r,\infty}})^r
\sum_{j\leq 2r}| \partial^j_\caT f | \,.
$$
Apply \eqref{eq:ugly} to $\partial_\caT^r \zeta$ to find
$$
\|x^\kappa\partial_{x}^{\alpha} \partial_\caT^r \zeta (\caT)\|_{L^2(\R^3)}
\leq
c(m,s,r)
e^{\big(1+ b(m,s)
\| \widetilde\gamma\|_{_{W^{m+s,\infty}}}\big)
\caT }\ 
(1+\|\widetilde\gamma\|_{W^{r,\infty}})^r
\sum_{|\ell|=0}^s
\|x^\ell f \|_{H^{m+ s -\ell+ 2r}}  \,.
$$

\end{proof}

Given $\widetilde w_0(0)=f \in \caS(\RR^3;\KK)$ choose
$\widetilde w_0$ the solution provided by Lemma \ref{lem:schrodweight}.
It 
satisfies for all $\alpha$,
\begin{equation}
\label{eq:w0bound}
(x \, ,\partial_{\caT,t,x})^\alpha w_0\ \in
\ 
L^\infty([0,T]_\caT\times \RR_t\times\RR^3_x\, ;\, \KK)\,.
\end{equation}
Setting the right hand side of 
\eqref{eq:9.20} equal to zero yields an
equation that  is solved using a 
${\rm Hom}(\KK)$ valued integrating
factor $g(t,x)$,
\begin{equation}
\label{eq:Piw1-new}
\Pi w_1 =g(t,x)\, w_0,
\end{equation}
where $g$ is the solution of \eqref{eq:2}.
The ray average hypothesis with parameter
$0\le \beta<1$ yields
estimates for the derivatives of $g$
and therefore those of $\Pi w_1$,
\begin{equation*}
\langle t\rangle^{-\beta}\ 
(x \,,\partial_{t,x})^\alpha
(\Pi w_1)
\ \in\ {L^\infty([0,T]\times \RR_t\times\RR^3_x\,;\,\KK)}
\,.
\end{equation*} 
The component
$(I-\Pi)w_1$ is given by \eqref{eq:(I-Pi)w1} in terms of $w_0$
so
\eqref{eq:w0bound} implies,
\begin{equation*}
\label{eq:w1sublinear}
(x\,,\partial_{\caT,t,x,y})^\alpha
(I-\Pi) w_1
\ \in \ {L^\infty([0,T]\times\RR_t \times\RR^3_x\times\RR^3_y)}
\,,
\end{equation*}
with $w_1$ is periodic in $y$.
This completes the determination of
$w_0$ and $w_1$. At this stage one has $r_{-1}= r_0= \Pi \, r_1=0$.
We choose $w_2$ to that $(I-\Pi)r_1=0$. 
The latter equation holds if and only if 
\begin{equation}
\label{eq:(I-P)w2}
(I-\Pi)w_2 \ = \ \Q\,\MM w_1 + \Q\,\NN w_0 \,.
\end{equation}
This determines $(I-\Pi)w_2$. On the other hand,
$\Pi w_2$ does not affect the profiles $r_{-1},r_0,r_1$. It is 
chosen equal to zero,
\begin{equation}
\label{eq:Pw2}
\Pi\,w_2 \ =\ 0\,.
\end{equation}
The estimates for $w_0,w_1$ imply 
that the $y$-periodic
$w_2$ satisfies estimates analogous to those of
$w_1$ so,
\begin{equation}
\label{eq:wjsublinear}
\langle t\rangle^{-\beta}\ 
(x\,,\partial_{\caT,t,x,y})^\alpha
 w_j
\ \in \
L^\infty([0,T]\times\RR_t\times\RR^3_x\times\RR^3_y),
\qquad j=1,2
\,.
\end{equation}
This completes the determination of the profiles so that
\begin{equation}
\label{eq:vanishing-coeff}
r_{-1}=r_{0}=r_1=0\,.
\end{equation}

\begin{theorem}
\label{thm:diffr-approx}
If $f\in \caS(\R^3;\KK)$ there is a unique
$w_0\in C^\infty(\R;\caS(\R^3;\KK))$ satisfying 
\eqref{eq:transp-schrod} with
$w_0(0)=f$.
Define $w_1$, $w_2$ and $v^h$ by 
 \eqref{eq:(I-Pi)w1}, \eqref{eq:Piw1-new},
\eqref{eq:(I-P)w2}, \eqref{eq:Pw2} and \eqref{eq:approx-diffr} respectively.
If $u^h$ is the exact solution of $P^h u^h = 0$
with
$u^h\big|_{t=0}=v^h\big|_{t=0}$, then for all $\alpha$
\begin{equation}\label{eq:errorest-diffr}
\sup_{t\in [0,T/h]}\ 
\big\|
(x\,,\,h\,\partial_{t,x}  )^\alpha
(u^h - v^h)
\big\|_{L^2(\R^3)} \ \leq\  C(\alpha)\,h^{1-\beta}  \quad 0<h<1\,.
\end{equation}
\end{theorem}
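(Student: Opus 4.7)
The plan is to follow the residual-plus-stability pattern of the proof of Theorem \ref{error-est1}, adapted to the diffractive time scale $t=\Op(1/h)$ and with weighted norms handled via commutators.

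First, the residual estimate. By construction of $w_0,w_1,w_2$ via \eqref{eq:vanishing-coeff}, the three leading coefficients $r_{-1}, r_0, r_1$ in $Z^h$ vanish identically, so
$$
P^h v^h(t,x)\ =\ e^{iS/h}\bigl(h^2 r_2 + h^3 r_3\bigr)(ht,t,x,x/h).
$$
The profiles $r_j$ are multilinear expressions in $w_0,w_1,w_2$ and their $(t,x,y)$-derivatives, with bounded $y$-periodic coefficients from Hypothesis \ref{hyp:diffractive}. Combined with the sublinear growth bounds \eqref{eq:w0bound}–\eqref{eq:wjsublinear}, this yields
$$
\bigl\|(h\partial_{t,x})^\gamma P^h v^h(s)\bigr\|_{L^2(\RR^3)}\ \le\ C(\gamma)\,h^2\,\langle s\rangle^\beta,\qquad s\in[0,T/h].
$$

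Second, invoke Theorem \ref{thm:stability1}. Under Hypothesis \ref{hyp:diffractive}, Remark \ref{rmk:diffr} gives $C(m,h)=\Op(h)$, so on $[0,T/h]$ the factor $e^{c(m)C(m,h)t}$ is uniformly bounded. Since $u^h(0)=v^h(0)$, Duhamel's formula for $w^h:=u^h-v^h$ (which satisfies $P^h w^h=-P^h v^h$) gives
$$
\|w^h(t)\|_{m,h}\ \le\ c(m)\int_0^{T/h} \|P^h v^h(s)\|_{m,h}\,ds\ \le\ C\int_0^{T/h} h^2\langle s\rangle^\beta\,ds\ \le\ C\, h^{1-\beta}.
$$
As in the proof of Theorem \ref{error-est1}, the $(h\partial_t)^k$ content of the initial norm $\|w^h(0)\|_{m,h}$ is handled inductively by using the PDE to rewrite higher time derivatives in terms of spatial derivatives of $w^h$ plus $P^h w^h$, each vanishing at $t=0$ to sufficient order.

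Third, for the $x^\alpha$ weights, exploit that the $A_j$ are constant, so $[P^h,x_j]=A_j$ is a bounded multiplication operator. Then
$$
P^h(x^\beta w^h)\ =\ -x^\beta P^h v^h \ +\ [P^h,x^\beta]\, w^h,
$$
with $[P^h,x^\beta]$ a zeroth-order-in-derivatives operator of $x$-degree $|\beta|-1$. Lemma \ref{lem:schrodweight} gives that $\widetilde w_0(\caT,\cdot)\in\caS(\RR^3;\KK)$ uniformly in $\caT\in[0,T]$; the same holds for the correctors $\Pi w_1,(I-\Pi)w_1,w_2$ via \eqref{eq:(I-Pi)w1}–\eqref{eq:Pw2}. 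Hence the weighted residual $x^\beta P^h v^h$ obeys the same $\Op(h^2\langle s\rangle^\beta)$ bound as in the first step, up to a constant depending on $\beta$. Induction on $|\beta|$ using the stability argument of the second step delivers the full weighted semiclassical estimate.

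The main obstacle is the third step: one must control the interaction between the Schwartz decay of $\widetilde w_0$ in the moving-frame variable $\xi=x-\V t$ and the lab-frame weights $x^\beta=(\xi+\V t)^\beta$ over the long time $t=\Op(1/h)$. The sublinearity in $t$ from the ray-average hypothesis together with the extra factor $h^2$ in the residual must be orchestrated so that the polynomial-in-$t$ factors generated by $\V t$ are absorbed, yielding a uniform $h^{1-\beta}$ bound independent of the order of the commutator iteration.
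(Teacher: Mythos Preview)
Your proposal follows essentially the same route as the paper: the residual estimate from \eqref{eq:vanishing-coeff} and \eqref{eq:wjsublinear}, the uniform stability from Theorem~\ref{thm:stability1} together with Remark~\ref{rmk:diffr}, the initial-data bookkeeping borrowed from the proof of Theorem~\ref{error-est1}, and the commutator-with-$x_j$ induction for the polynomial weights. The paper packages the last step as Lemma~\ref{lem:weighted} (whose proof is precisely your $[P^h,x_j]=A_j$ plus Gronwall) and then concludes with a one-line ``using the weighted estimates as above proves the theorem''; the concern you raise in your final paragraph about lab-frame weights $x^\beta=(\xi+\V t)^\beta$ over times $t=\Op(1/h)$ is not addressed more explicitly in the paper's argument either.
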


\begin{proof}
Let $m\in\NN$. 
The bound \eqref{eq:wjsublinear} and the identity 
\eqref{eq:vanishing-coeff}
yield the 
residual estimate
\begin{equation}
\label{eq:residual-diffr}
\big\|
\langle t \rangle ^{-\beta}
(x\,,\,h\,\partial_{t,x}  )^\alpha
 \big(P^h v^h\big)
\big\|_{L^\infty([0,T/h]\times \RR^{3})}
\ \le \
C(\alpha)\, h^2,
\qquad
h\in ]0,1[\,.
\end{equation}
This combined with Theorem \ref{thm:stability1} 
and Remark \ref{rem:diffrsize} shows that there exists a constant 
$C(m,T)$ such that for all $t\in [0,T/h]$

\begin{align}
\label{eq:difference-new}
\nonumber
\big\|
(u^h-v^h)(t)\big\|_{m,h}
&\ \le\
C(m,T)\,\Big(
\big\|
(u^h-v^h)(0)\big\|_{m,h}
\ +\
\int_0^{T/h}
\big\|
P^h\,v^h(s)\big\|_{m,h}\ ds
\Big)
\\
&\ =\
C(m,T,\beta)\,\Big(
\big\|
(u^h-v^h)(0)\big\|_{m,h}
\ +\ 
\Op\big(h^{1-\beta}\big)\Big)
\,.
\end{align}
For the first term in the rhs of \eqref{eq:difference-new} we follow the proof 
of Theorem \ref{error-est1} which yields in this case
$$
\big\|(u^h-v^h)(0)\big\|_{m,h}\le \Op\big(h^{1-\beta}\big) \,.
$$
The error estimate with polynomial weights $x^{\alpha}$ requires 
an additional weighted stability estimate.
We will use the following notation. For a function $u(t,x)$, 
integers $\ell , m \geq 0$, define
$$
\|u(t)\|_{\ell,m,h} \ := \
\sum_{|\beta|\le \ell\,,\, |\alpha|\le m} \,\big\|
x^{\beta}(h\partial_{t,x})^\alpha
u(t)
\big\|_{ L^2( \RR^{3}  ) }\,.
$$

\begin{lemma}\label{lem:weighted}
Under the assumptions of Theorem \ref{thm:stability1}, there 
exist constants $c(\ell, m)$, $C(m,h)$ such that
\begin{equation}\label{eq:weight-ineq}
\|u(t)\|_{\ell,m,h}
\ \le \
c(\ell,m)\,
e^{c(\ell,m)\, C(m,h)\, t}\
\|u(0)\|_{\ell,m,h}
\,.
\end{equation}
\end{lemma}

\begin{proof}
It is a commutator argument resembling the proof of
Theorem \ref{thm:stability1}.
The proof is inductive in $\ell$. 
The case $\ell = 0$ is provided by 
Theorem \ref{thm:stability1}.
Assume that \eqref{eq:weight-ineq} holds for $\ell= \bar\ell$.
The case $\bar\ell +1$ is proved by applying the 
inductive hypothesis to $x_j u$ for $j=1,2,3$ and using the commutator 
relation 
$$
[P^h,x_j] = A_j \qquad j=1,2,3 \,.
$$
An application of Gronwall's inequality  finishes the proof of the lemma. 
\end{proof}
\noindent
Using the weighted estimates as above proves the theorem.
\end{proof}

\noindent
{\bf Proof of Theorem \ref{thm:main-diffr} } Write
$$
\uu^h-\uv^h \ =\
(u^h-v^h)
\ +\ 
( \uu^h-u^h )
\ +\
(v^h-\uv^h)\,.
$$
The preceding theorem estimates the first term.
It suffices to show that the differences $u^h-\uu^h$
and $v^h-\uv^h$ have similar upper bounds.
The first follows from the stability for Maxwell's
equations proved in Lemma \ref{lem:weighted}.
The second follows from Lemma \ref{lem:schrodweight}.
\qed


\begin{thebibliography}{99}



\bibitem{apr}  G. Allaire, M. Palombaro, J. Rauch,
{\em Diffractive behavior of the wave equation in
periodic media:  weak convergence analysis,}
Annali di Matematica Pura e Applicata {\bf 188},
561-590, (2009).

\bibitem{apr2}  G. Allaire, M. Palombaro, J. Rauch,
{\em Diffractive geometric optics for Bloch waves},
Arch. Ration. Mech. Anal., 
{\bf 202}, 373-426, (2011)

\bibitem{apr3}  G. Allaire, M. Palombaro, J. Rauch, 
{\em A bound on group velocity for Bloch wave packets}, 
preprint (2012).


\bibitem{ap} G. Allaire, A. Piatnitski,
{\em Homogenization of the Schr\"{o}dinger
equation and effective mass theorems,}
Comm. Math Phys. 258, pp.1-22 (2005).

\bibitem{altug} H. Altug, J. Vuckovic, 
{\em Experimental demonstration of the slow group 
velocity of light in two-dimensional coupled photonic crystal 
microcavity arrays,} Appl. Phys. Lett. {\bf 86},
111102-1 to 111102-3 (2004).

\bibitem{bajcsy} M. Bajcsy, A. Zibrov, M. Lukin,
{\em Stationary pulses of light in an atomic medium,} 
Nature {\bf 426} (2003).



\bibitem{lannes} K. Barrailh, D. Lannes,
{\em A general framework for diffractive optics and its 
applications 
to lasers with large spectrum and short pulses}, 
SIAM, Journal on Mathematical Analysis {\bf 34}, 
no. 3, 636-674, (2003)

\bibitem{blp} A.~Bensoussan, J.-L.~Lions, G.~Papanicolaou,
{\em Asymptotic Analysis for Periodic Structures,}
North-Holland, Amsterdam (1978).

\bibitem{bloch} F. Bloch,
{\em Uber die quantenmechanik der electronen in 
kristallgittern,}
Z. Phys. {\bf52}, pp.~555--600, (1928).



\bibitem{bril} L. Brillouin,
{\em Propagation of Waves in Periodic Structures}, Dover,
 New York (1953).



\bibitem{courant}  R. Courant,
{\it Methods of Mathematic Physics vol. II}, 2nd. ed.
(Wiley-Interscience, New York 1962).

\bibitem{donnat1} P. Donnat, J.-L. Joly, 
G. M\'etivier, J. Rauch, 
{\em Diffractive nonlinear geometric optics,} 
S\'eminaire sur les Equations aux D\'eriv\'ees Partielles, 
1995--1996, 
Exp. No. XVII, 25 pp., Ecole Polytechnique, Palaiseau (1996). 


\bibitem{flo} G.~Floquet, 
{\em Sur les \'equations diff\'erentielles lin\'eaires \`a
  coefficients p\'eriodiques}, Ann. Sci. \'Ecole Norm. Sup. S\'er. 2, 12
  (1883), pp.~47--89.



\bibitem{gersen}
H. Gersen, 
T. Karle, 
R. Engelen, 
W. Bogaerts, 
J. Korterik, 
N. van Hulst, 
T. Krauss,  and 
L. Kuipers, 
{\em Real-space observation of ultraslow light 
in photonic crystal waveguides,} Phys. Rev. Lett. 94(2005)
073903-1-073903-4.

\bibitem{hamilton}
W.R. Hamilton, 
{\em Third supplement to an essay on the theory of systems of rays},
Trans. Roy. Irish Acad. {\bf 17}, 1-144, (1833).

\bibitem{hau} L.V. Hau, S. E. Harris, Z. Dutton, C. Behroozi,
{\em Light speed reduction to 17 meters per second in an 
ultracold atomic gas,} Nature {\bf 397}, 594-598, (1999).

\bibitem{jmrduke}  J.-L. Joly, G. M\'etiver, and J. Rauch,
{\em Generic rigorous asymptotic expansions for weakly nonlinear 
multidimensional oscillatory waves,}  Duke Math J., {\bf 70}, 373-404, (1993). 

\bibitem{jmr}  J.-L. Joly, G. M\'etiver, and J. Rauch,
{\em Diffractive nonlinear geometric optics with 
rectification}, Indiana 
U. Math. J. {\bf 47}, 1167-1242, (1998). 

\bibitem{jmrhj}  J.L. Joly, G. M\'etivier and J. Rauch, 
{\em Hyperbolic domains of dependence and Hamilton-Jacobi equations}, 
J. Hyp. Diff. Eq.,  {\bf 2}(2005)713-744.

\bibitem{kato} T. Kato,
{\em Perturbation Theory for Linear Operators,}
Springer-Verlag, Berlin (1966).


\bibitem{kuchment2} P. Kuchment,
{\em The mathematics of photonic crystals,}
Mathematical modeling in optical science,
207--272, Frontiers Appl. Math., 22, SIAM, 
Philadelphia (2001).

\bibitem{lax} P. D. Lax, 
{\em Asymptotic solutions of oscillatory initial 
value problems},
Duke Math. J. {\bf 24}, 627-646, (1957).

\bibitem{leray} J. Leray, 
{\em Lectures on Hyperbolic Partial Differential Equations},
Institute for Advanced Study (1953).


\bibitem{nishidarauch}  T. Nishida and J. Rauch,
The initial boundary value problem for
the inviscid compressible Euler equations, unpublished manuscript.

\bibitem{rauch}  J. Rauch,
{\em Hyperbolic Partial Differential Equations and 
Geometric Optics},
American Mathematical Society (2012).



\bibitem{reedsimon} M. Reed, B. Simon,
{\em Methods of Modern Mathematical Physics},
Academic Press, New York (1978).

\bibitem{russell} P. St. J. Russell, 
{\em Photonic crystal fibers}, J. Lightwave. Technol., 
24 (12), 4729-4749, (2006).

\bibitem{sjoberg}
D. Sj\"{o}berg, Ch. Engstr\"{o}m, G. Kristensson, 
D. Wall, N. Wellander,
A Floquet-Bloch decomposition of Maxwell's equations applied to homogenization, 
SIAM MMS, 4, 149-171 (2005). 



\bibitem{vlasov}
Y.A. Vlasov, 
S. Petit,
G. Klein,
B. H\"onerlage,
and C. Hirlmann,
{\em Femtosecond measurements of the time of 
flight of photons in a three-dimensional photonic crystal},  
Phys. Rev. E {\bf 60}, 1030-1035, (1999).

\bibitem{wilcox} C.  Wilcox,
{\em Theory of Bloch waves},
 J. Analyse Math.  {\bf 33}, 146--167,  (1978) 


\end{thebibliography}
\end{document}